\newcommand{\hooklongrightarrow}{\lhook\joinrel\longrightarrow}
\begin{document}

\newtheorem{The}{Theorem}[section]

\newtheorem{Lem}[The]{Lemma}

\newtheorem{Prop}[The]{Proposition}

\newtheorem{Cor}[The]{Corollary}

\newtheorem{Rem}[The]{Remark}

\newtheorem{Obs}[The]{Observation}

\newtheorem{SConj}[The]{Standard Conjecture}

\newtheorem{Titre}[The]{\!\!\!\! }

\newtheorem{Conj}[The]{Conjecture}

\newtheorem{Question}[The]{Question}

\newtheorem{Prob}[The]{Problem}

\newtheorem{Def}[The]{Definition}

\newtheorem{Not}[The]{Notation}

\newtheorem{Claim}[The]{Claim}

\newtheorem{Conc}[The]{Conclusion}

\newtheorem{Ex}[The]{Example}

\newtheorem{Fact}[The]{Fact}

\newtheorem{Formula}[The]{Formula}

\newtheorem{Formulae}[The]{Formulae}

\newtheorem{The-Def}[The]{Theorem and Definition}

\newtheorem{Prop-Def}[The]{Proposition and Definition}

\newtheorem{Cor-Def}[The]{Corollary and Definition}

\newtheorem{Conc-Def}[The]{Conclusion and Definition}

\newtheorem{Terminology}[The]{Note on terminology}

\newtheorem{Construction}[The]{Construction}

\newcommand{\C}{\mathbb{C}}

\newcommand{\R}{\mathbb{R}}

\newcommand{\N}{\mathbb{N}}

\newcommand{\Z}{\mathbb{Z}}

\newcommand{\Q}{\mathbb{Q}}

\newcommand{\Proj}{\mathbb{P}}

\newcommand{\Rc}{\mathcal{R}}

\newcommand{\Oc}{\mathcal{O}}

\newcommand{\Vc}{\mathcal{V}}

\newcommand{\Id}{\operatorname{Id}}

\newcommand{\pr}{\operatorname{pr}}

\newcommand{\rk}{\operatorname{rk}}

\newcommand{\del}{\partial}

\newcommand{\delbar}{\bar{\partial}}

\newcommand{\Cdot}{{\raisebox{-0.7ex}[0pt][0pt]{\scalebox{2.0}{$\cdot$}}}}

\newcommand\nilm{\Gamma\backslash G}

\newcommand\frg{{\mathfrak g}}

\newcommand{\fg}{\mathfrak g}

\newcommand{\Oh}{\mathcal{O}}

\newcommand{\Kur}{\operatorname{Kur}}

\newcommand\gc{\frg_\mathbb{C}}

\newcommand\jonas[1]{{\textcolor{green}{#1}}}

\newcommand\luis[1]{{\textcolor{red}{#1}}}

\newcommand\dan[1]{{\textcolor{blue}{#1}}}

\begin{center}

{\Large\bf Deformations of Higher-Page Analogues of $\partial\bar\partial$-Manifolds}

\end{center}

\begin{center}

{\large Dan Popovici, Jonas Stelzig and Luis Ugarte}

\end{center}

\vspace{1ex}

\noindent{\small{\bf Abstract.} We extend the notion of small essential deformations of Calabi-Yau complex structures from the case of the Iwasawa manifold, for which they were introduced recently by the first-named author, to the general case of page-$1$-$\partial\bar\partial$-manifolds that were jointly introduced very recently by all three authors. We go on to obtain an analogue of the unobstructedness theorem of Bogomolov, Tian and Todorov for Calabi-Yau page-$1$-$\partial\bar\partial$-manifolds. As applications of this discussion, we study the small deformations of certain Nakamura solvmanifolds and reinterpret the cases of the Iwasawa manifold and its $5$-dimensional analogue from this standpoint.}

\vspace{2ex}

\section{Introduction}\label{section:Introduction} In this paper, we begin to investigate the role that the new class of page-$1$-$\partial\bar\partial$-manifolds introduced in [PSU20a] plays in the theory of {\bf deformations} of complex structures and in the new approach to {\bf Mirror Symmetry}, extended to the possibly non-K\"ahler context, proposed in [Pop18].

\vspace{2ex}

(I)\, On the one hand, we introduce in $\S.$\ref{section:page-1_essential-deformations} the notion of {\bf small essential deformations} of an arbitrary compact {\it Calabi-Yau page-$1$-$\partial\bar\partial$-manifold}.

\vspace{1ex}

The special case of the $3$-dimensional Iwasawa manifold $I^{(3)}$ (a complex parallelisable nilmanifold that is also a Calabi-Yau page-$1$-$\partial\bar\partial$-manifold) was treated in [Pop18]. Recall that a compact complex manifold $X$ is said to be {\it complex parallelisable} if its holomorphic tangent bundle $T^{1,\,0}X$ is {\it trivial}. By Wang's theorem [Wan54], any complex parallelisable $X$ is the quotient $G/\Gamma$ of a {\it complex} Lie group $G$ by a co-compact, discrete subgroup $\Gamma$. When the {\it nilpotent} Lie group $G$ is merely a real Lie group endowed with a left-invariant complex structure, $X=G/\Gamma$ is said to be a {\it nilmanifold}. Recall that every nilmanifold $X$ is a {\it Calabi-Yau manifold} in the sense (adopted in this paper) that its canonical line bundle $K_X$ is {\it trivial}. In the case of $I^{(3)}$, $G$ is a {\it nilpotent complex} Lie group, the Heisenberg group of $3\times 3$ upper-triangular matrices with entries in $\C$.

The {\it small essential deformations} of the Iwasawa manifold $I^{(3)}$ were defined in [Pop18] as those small deformations of $I^{(3)}$ that are {\it not complex parallelisable}.




According to [Pop18], after removing the complex parallelisable small deformations of the $3$-dimensional Iwasawa manifold $X=I^{(3)}$ from its Kuranishi family, the remaining, {\it essential}, small deformations turn out to be parametrised by the $E_2$-cohomology space $E_2^{2,\,1}(X)$ featuring on the second page of the {\it Fr\"olicher spectral sequence (FSS)} of $I^{(3)}$. Recall that the ordinary small deformations are parametrised by the bigger-dimensional Dolbeault cohomology space $H^{2,\,1}_{\bar\partial}(X)=E_1^{2,\,1}(X)$ featuring on the first page of the FSS. The small essential deformations of $I^{(3)}$ have much better Hodge-theoretical properties than the ordinary small deformations. Indeed, the FSS of $I^{(3)}$ degenerates at $E_2$, rather than at $E_1$, and an analogue of the Hodge decomposition and symmetry for $I^{(3)}$ was observed in [Pop18] when the traditional first page is replaced by the second page of the FSS.

This phenomenon was generalised in [PSU20a] through the introduction of the new class of {\it page-$1$-$\partial\bar\partial$-manifolds} (to which the Iwasawa manifolds belong) and, more generally, that of {\it page-$r$-$\partial\bar\partial$-manifolds} for every non-negative integer $r$.




It turns out that for some complex parallelisable $n$-dimensional nilmanifolds $X$, such as the $5$-dimensional analogue $I^{(5)}$ of the Iwasawa manifold $I^{(3)}$, the non-complex parallelisable small deformations no longer coincide with those parametrised by $E_2^{n-1,\,1}(X)$ even when this space injects in a natural way into $E_1^{n-1,\,1}(X)$. Recall that, classically, $E_1^{n-1,\,1}(X)$ parametrises all the small deformations of $X$ when there is no obstruction to deforming the complex structure of $X$.

\vspace{2ex}

We define the {\bf small essential deformations} of a compact Calabi-Yau page-$1$-$\partial\bar\partial$-manifold $X$ as those small deformations of $X$ that are parametrised by $E_2^{n-1,\,1}(X)$. However, in general, there is no canonical injection of $E_2^{n-1,\,1}(X)$ into $E_1^{n-1,\,1}(X)$, although there is always a {\it canonical surjection} from a canonical vector subspace $E_1^{n-1,\,1}(X)_0$ of $E_1^{n-1,\,1}(X)$ to $E_2^{n-1,\,1}(X)$. Under the {\it Calabi-Yau page-$1$-$\partial\bar\partial$}-assumption on $X$, we manage to associate in a natural, unique way a linear {\it injection}\begin{equation*}\label{eqn:omega_lift-map}J^{n-1,\,1}_\omega:E_2^{n-1,\,1}(X)\hooklongrightarrow E_1^{n-1,\,1}(X)_0\subset E_1^{n-1,\,1}(X)\end{equation*} with every Hermitian metric $\omega$ on $X$ such that $J^{n-1,\,1}_\omega$ is a lift (i.e. a section) of the canonical surjection $E_1^{n-1,\,1}(X)_0\twoheadrightarrow E_2^{n-1,\,1}(X)$. (See Conclusion \ref{Conc:minimal-d-closed_rep} and the discussion in $\S.$\ref{section:page-1_essential-deformations} leading up to it.)

As a consequence of this discussion, we propose the following definition of small essential deformations of the complex structure of $X$ induced by a given metric $\omega$ on $X$.

 \begin{Def}\label{Def:introd_ess-deformations} Let $X$ be a compact complex $n$-dimensional {\bf Calabi-Yau page-$1$-$\partial\bar\partial$-manifold}.

 For any Hermitian metric $\omega$ on $X$, the space $E_1^{n-1,\,1}(X)_{\omega\mbox{-ess}}$ of {\bf small $\omega$-essential deformations} of $X$ is defined as the image of $J^{n-1,\,1}_{\omega}$: \[E_1^{n-1,\,1}(X)_{\omega\mbox{-ess}}:= J^{n-1,\,1}_{\omega}(E_2^{n-1,\,1}(X))\subset E_1^{n-1,\,1}(X).\]

 \end{Def}

 See Definition \ref{Def:ess-deformations} for further details. In particular, when $X$ has a canonical (i.e. depending only on its complex structure) Hermitian metric $\omega_0$, the small $\omega_0$-essential deformations of $X$ will be called the {\it small essential deformations}.


\vspace{2ex}

(II)\, On the other hand, we study in $\S.$\ref{section:page-1-unobstructedness} the possible {\bf unobstructedness} of the small deformations, both essential and ordinary, of {\bf Calabi-Yau page-$1$-$\partial\bar\partial$}-complex structures. We get a generalisation of the following classical Bogomolov-Tian-Todorov theorem (see [Tia87], [Tod89]): \\

 {\it The Kuranishi family of a compact {\bf K\"ahler Calabi-Yau} manifold is {\bf unobstructed}.} \\

 \noindent The K\"ahler assumption can be weakened to the $\partial\bar\partial$-assumption (and even to the $E_1=E_\infty$ assumption), as pointed out in [Tia87], [Ran92] and [Kaw92] -- see also discussions by various other authors such as [Pop13, Theorem 1.2].

Our main result is the following statement to the effect that, under certain cohomological conditions, a similar phenomenon holds when the $\partial\bar\partial$-assumption is further weakened to the page-$1$-$\partial\bar\partial$-assumption. See Definition \ref{Def:ess_unobstructedness} for the meaning of {\it unobstructedness} for the {\it essential Kuranishi family}. Meanwhile, for any bidegree $(p,\,q)$, we let ${\cal Z}_r^{p,\,q}(X)$ stand for the vector space of smooth $E_r$-closed $(p,\,q)$-forms on $X$. (These are the smooth $(p,\,q)$-forms on $X$ that represent $E_r$-cohomology classes on the $r$-th page of the Fr\"olicher spectral sequence. See e.g. Proposition 2.3 in [PSU20b], reproduced as Proposition \ref{Prop:E_r-closed-exact_conditions} below, for a description of them.)

\begin{The}\label{The:Introd_page-1-ddbar_unobstructedness} Let $X$ be a compact {\bf Calabi-Yau page-$1$-$\partial\bar\partial$-manifold} with $\mbox{dim}_\C X=n$. Fix a non-vanishing holomorphic $(n,\,0)$-form $u$ on $X$ and suppose that

\begin{equation}\label{eqn:extra-hyp_unobstructedness}\psi_1(t)\lrcorner(\rho_1(s)\lrcorner u)\in{\cal Z}_2^{n-2,\,2}\end{equation}

\noindent for all $\psi_1(t), \rho_1(s)\in C^\infty_{0,\,1}(X,\,T^{1,\,0}X)$ such that $\psi_1(t)\lrcorner u, \rho_1(s)\lrcorner u\in\ker d\cup\mbox{Im}\,\partial$.

\vspace{1ex}

(i)\, Then, the {\bf essential Kuranishi family} of $X$ is {\bf unobstructed}.

\vspace{1ex}

(ii)\, If, moreover, ${\cal Z}_1^{n-1,\,1} = {\cal Z}_2^{n-1,\,1}$, the {\bf Kuranishi family} of $X$ is {\bf unobstructed}.

\end{The}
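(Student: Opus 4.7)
My plan is to adapt the classical Kuranishi-Tian-Todorov recursive construction to the page-$1$-$\partial\bar\partial$ setting. Contraction with the nonvanishing holomorphic $n$-form $u$ identifies $C^\infty_{0,\,1}(X, T^{1,\,0}X)$ with $C^\infty_{n-1,\,1}(X)$ as $\bar\partial$-complexes, so the Maurer-Cartan equation $\bar\partial\phi = \tfrac{1}{2}[\phi,\phi]$ may be recast in terms of pure-type $(n-1,\,1)$-forms. I would seek a formal power-series solution $\phi(t) = \sum_{k \geq 1} \phi_k(t)$ whose linear part $\phi_1$ parametrises the desired space: $J^{n-1,\,1}_\omega(E_2^{n-1,\,1}(X))$ in case (i), or all of $E_1^{n-1,\,1}(X)$ in case (ii). The gauge I would impose is that $\eta_1 := \phi_1 \lrcorner u$ is $d$-closed (true by construction of $J^{n-1,\,1}_\omega$ in (i), and in (ii) true because ${\cal Z}_1^{n-1,\,1} = {\cal Z}_2^{n-1,\,1}$ combined with the page-$1$-$\partial\bar\partial$-property of [PSU20a] provides $d$-closed representatives of every $E_1$-class), and $\eta_k := \phi_k \lrcorner u \in \mbox{Im}\,\partial$ for all $k \geq 2$. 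In either case the pair $(\phi_k \lrcorner u, \phi_l \lrcorner u)$ falls within the scope of the hypothesis (\ref{eqn:extra-hyp_unobstructedness}).

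For the inductive step, assume $\phi_1, \dots, \phi_{N-1}$ have been built respecting this gauge. The order-$N$ obstruction is $\Omega_N := \tfrac{1}{2}\sum_{k+l=N}[\phi_k, \phi_l]\lrcorner u$. The Tian-Todorov identity expresses $[\phi,\psi]\lrcorner u$ as $\pm\,\partial\bigl(\phi\lrcorner(\psi\lrcorner u)\bigr)$ plus terms involving $\partial(\phi\lrcorner u)$ and $\partial(\psi\lrcorner u)$; the gauge condition $\eta_k \in \ker d \cup \mbox{Im}\,\partial$ forces $\partial\eta_k = 0$ and hence annihilates the latter contributions. One is left with $\Omega_N = \pm\,\partial\bigl(\sum_{k+l=N}\phi_k\lrcorner(\phi_l\lrcorner u)\bigr)$. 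By the hypothesis (\ref{eqn:extra-hyp_unobstructedness}), each primitive $\phi_k\lrcorner(\phi_l\lrcorner u)$ lies in ${\cal Z}_2^{n-2,\,2}$, so its $\partial$-image is $\bar\partial$-exact; consequently $\Omega_N$ is $\bar\partial$-exact and $\bar\partial\phi_N = \Omega_N$ can be solved. A second application of the page-$1$-$\partial\bar\partial$-property should allow one to choose the primitive $\phi_N$ so that $\eta_N \in \mbox{Im}\,\partial$, propagating the gauge to the next order.

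Part (ii) reduces to (i): the assumption ${\cal Z}_1^{n-1,\,1} = {\cal Z}_2^{n-1,\,1}$ upgrades every $\bar\partial$-closed $(n-1,\,1)$-form to an $E_2$-closed one, which in turn admits a $d$-closed representative by the page-$1$-$\partial\bar\partial$-property, so the argument of part (i) applies with $\phi_1$ ranging over all of $E_1^{n-1,\,1}(X)$. Convergence of the formal series is then handled by the standard elliptic machinery in a harmonic gauge, as in Kodaira-Spencer-Kuranishi. The step I expect to be the main obstacle is the gauge-propagation in the inductive step: given a $\bar\partial$-primitive of $\Omega_N$, one must modify it so that $\eta_N \in \mbox{Im}\,\partial$. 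This amounts to showing that the $\bar\partial$-exactness witness $\beta$ of $\partial\alpha$ appearing in the definition of ${\cal Z}_2^{n-2,\,2}$-membership can itself be chosen to be $\partial$-exact (up to irrelevant $\bar\partial$-closed terms). This is a refined page-$1$-$\partial\bar\partial$-style statement---$d$-exact forms of pure type ought to be expressible in an appropriate weakened $\partial\bar\partial$-form, with the primitive $\partial$-exact in the correct bidegree---and verifying that it holds on any compact page-$1$-$\partial\bar\partial$-manifold is where I expect the technical heart of the argument to lie.
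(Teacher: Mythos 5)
Your scaffolding coincides with the paper's: contract with $u$, run the Tian--Todorov recursion, and maintain the gauge $\eta_1\in\ker d$, $\eta_k\in\mbox{Im}\,\partial$ for $k\geq 2$ so that hypothesis (\ref{eqn:extra-hyp_unobstructedness}) remains applicable at every order. But the step you yourself flag as the ``technical heart'' --- producing a solution $\phi_N$ of $\bar\partial\phi_N=\Omega_N$ with $\phi_N\lrcorner u\in\mbox{Im}\,\partial$ --- is a genuine gap: you never prove it, and without it the induction does not close. The missing ingredient is exactly characterisation (F) of the page-$1$-$\partial\bar\partial$-property in [PSU20b, Theorem 4.3] (reproduced as Theorem \ref{The:page_r-1_ddbar-equivalence_bis}): $\partial({\cal Z}_2^{p,\,q})=\mbox{Im}\,(\partial\bar\partial)$. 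Granting this, no ``modification of a given $\bar\partial$-primitive'' is ever required. Since each $\phi_k\lrcorner(\phi_l\lrcorner u)$ lies in ${\cal Z}_2^{n-2,\,2}$ by hypothesis, the obstruction satisfies $\Omega_N\in\partial({\cal Z}_2^{n-2,\,2})=\mbox{Im}\,(\partial\bar\partial)$; writing $\Omega_N=\bar\partial\partial\Phi_N$ and defining $\eta_N:=\partial\Phi_N$ solves the equation ($\bar\partial\eta_N=\Omega_N$) and enforces the gauge ($\eta_N\in\mbox{Im}\,\partial$) in a single stroke. So the ``refined page-$1$-$\partial\bar\partial$-style statement'' you conjecture is a known equivalent characterisation, not something to be established from scratch, but your proposal neither quotes it nor proves it, and the $\bar\partial$-exactness of $\Omega_N$ that you do establish is strictly too weak: a $\bar\partial$-primitive chosen by generic elliptic theory has no reason to be $\partial$-exact after contraction, and your hypothesis (\ref{eqn:extra-hyp_unobstructedness}) would then fail to apply at order $N+1$.

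The same omission resurfaces, more subtly, in your reduction of (ii) to (i). You argue that ${\cal Z}_1^{n-1,\,1}={\cal Z}_2^{n-1,\,1}$ makes every $\bar\partial$-closed $(n-1,\,1)$-form $E_2$-closed, and that the page-$1$-$\partial\bar\partial$-property then yields a $d$-closed representative. But that property, as defined, yields a $d$-closed representative of the $E_2$-\emph{class}, which may a priori lie in a different Dolbeault class; for unobstructedness of the full Kuranishi family you need a $d$-closed representative inside \emph{each} $E_1$-class. This is repaired by the same identity: for $\alpha\in{\cal Z}_2^{n-1,\,1}$ one has $\partial\alpha\in\partial({\cal Z}_2^{n-1,\,1})=\mbox{Im}\,(\partial\bar\partial)$, say $\partial\alpha=-\partial\bar\partial\beta$, so that $\alpha+\bar\partial\beta$ is $d$-closed and Dolbeault-cohomologous to $\alpha$ (this is Lemma \ref{Lem:Z_equality_rewording} of the paper). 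In short: correct strategy, but the single load-bearing lemma is absent at both places where it is needed.
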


This undertaking is justified by the fact that unobstructedness of the Kuranishi family occurs for some well-known compact complex manifolds that are not $\partial\bar\partial$-manifolds (so, the Bogomolov-Tian-Todorov theorem may not apply), but are page-$1$-$\partial\bar\partial$-manifolds, such as $I^{(3)}$ and $I^{(5)}$. The point we will make is that $I^{(3)}$ and $I^{(5)}$ are not isolated examples, but they are part of a pattern.

\vspace{2ex}

(III)\, Finally, we demonstrate in $\S.$\ref{section:explicit-computations} the role played by the {\it small essential deformations} by means of examples. Our main observation is the following

\begin{Prop}\label{Prop:obstructed-unobstructed_ess-non-ess} There exist compact Calabi-Yau page-$1$-$\partial\bar\partial$-manifolds with {\bf obstructed small deformations} but with {\bf unobstructed small essential deformations}.

\end{Prop}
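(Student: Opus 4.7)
The strategy is to exhibit an explicit example (or class of examples) of a compact Calabi-Yau page-$1$-$\partial\bar\partial$-manifold whose full Kuranishi family is genuinely obstructed but for which the hypothesis~(\ref{eqn:extra-hyp_unobstructedness}) of Theorem~\ref{The:Introd_page-1-ddbar_unobstructedness}(i) is satisfied, so that its essential Kuranishi family is unobstructed. The natural candidates are the Nakamura solvmanifolds from $\S.$\ref{section:explicit-computations}, together possibly with the $5$-dimensional Iwasawa-type nilmanifold $I^{(5)}$, since Nakamura's classical computation already shows that small deformations can be obstructed in the nilpotent/solvable parallelisable setting, while the results recalled in Section~1 suggest that many such manifolds lie in the page-$1$-$\partial\bar\partial$ class.

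The plan is as follows. First, I would fix a Nakamura solvmanifold $X=G/\Gamma$ from those analysed in $\S.$\ref{section:explicit-computations}, describe a left-invariant coframe $(\alpha^i)$ of $(1,0)$-forms with explicit structure equations, and verify (a) that the canonical bundle $K_X$ is trivialised by the invariant form $u=\alpha^1\wedge\cdots\wedge\alpha^n$, and (b) that $X$ is a page-$1$-$\partial\bar\partial$-manifold by direct inspection of the spectral sequence on invariant forms (for which Nomizu/Rollenske-type results guarantee that invariant cohomology computes the full cohomology). This reduces all further computations to a finite-dimensional subcomplex of invariant forms.

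Next, I would compute the Maurer-Cartan equation
\[\bar\partial\psi + \tfrac{1}{2}[\psi,\psi]=0\]
on invariant $(0,1)$-forms with values in $T^{1,0}X$. This will produce the Kuranishi space as an explicit analytic germ; obstructions will appear as quadratic (or higher-order) relations coming from Schouten brackets of certain basis classes in $E_1^{n-1,1}(X)$ that fail to be $\bar\partial$-exact. By direct enumeration I would identify a pair of obstructed directions $\psi_1,\rho_1 \in E_1^{n-1,1}(X)$ whose bracket has non-zero class, thereby establishing that the full Kuranishi family is obstructed. This settles the first half of the proposition.

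Finally, I would verify the hypothesis~(\ref{eqn:extra-hyp_unobstructedness}) of Theorem~\ref{The:Introd_page-1-ddbar_unobstructedness} on the invariant sub-complex: for all $\psi_1(t),\rho_1(s)\in C^\infty_{0,1}(X,T^{1,0}X)$ representing essential classes (i.e.\ those in the image of $J^{n-1,1}_\omega$ for a chosen invariant metric $\omega$), one needs to check that $\psi_1(t)\lrcorner(\rho_1(s)\lrcorner u)$ is $E_2$-closed. Since the essential classes sit inside a smaller, well-controlled subspace $E_1^{n-1,1}(X)_0\subset E_1^{n-1,1}(X)$ singled out by a $d$-closedness condition, the Leibniz rule together with $d(\rho_1\lrcorner u)=0$ modulo $\partial$-exact terms should force the required contraction to lie in $\mathcal Z_2^{n-2,2}$. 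Applying Theorem~\ref{The:Introd_page-1-ddbar_unobstructedness}(i) then yields unobstructedness of the essential Kuranishi family. The main obstacle I anticipate is the combinatorial bookkeeping in the explicit Maurer-Cartan calculation, namely isolating exactly those quadratic brackets that obstruct the full family while showing that their restriction to essential representatives is $E_2$-exact; this is where the contrast between the two pages of the Frölicher spectral sequence does the decisive work.
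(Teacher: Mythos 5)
Your choice of examples (the Nakamura solvmanifolds) and your treatment of the obstructedness half are in line with the paper: the paper also takes the class (3b) Nakamura solvmanifolds and simply quotes Nakamura's classical result [Nak75, eqn. (3.3)] that their full Kuranishi family is obstructed (no Maurer-Cartan computation is redone). However, your strategy for the unobstructedness half has a genuine gap: you propose to verify hypothesis (\ref{eqn:extra-hyp_unobstructedness}) and then apply Theorem \ref{The:Introd_page-1-ddbar_unobstructedness}(i), but this is exactly what \emph{cannot} be done for these manifolds. The paper proves in Observation \ref{Obs:no-hypothesis_yes-result} that the class (3b) Nakamura manifolds fail hypothesis (\ref{eqn:extra-hyp_unobstructedness}): for $\psi = e^{-2z_1}d\bar z_2\otimes\frac{\partial}{\partial z_3}$ and $\rho = d\bar z_1\otimes\frac{\partial}{\partial z_1}$ (both contractions with $u$ are $d$-closed), the form $\partial\bigl(\psi\lrcorner(\rho\lrcorner u)\bigr) = -2e^{-2z_1}dz_{12\overline{12}}$ represents a nonzero Bott-Chern class, so $\psi\lrcorner(\rho\lrcorner u)\notin{\cal Z}_2^{n-2,\,2}$. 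Note that $\rho$ here \emph{is} an essential direction, so the failure cannot be avoided by restricting one factor to essential classes. Your fallback — checking the hypothesis only on pairs of essential representatives — is also insufficient to run the induction in the proof of Theorem \ref{The:Introd_page-1-ddbar_unobstructedness}: the higher-order corrections $\psi_\nu$ ($\nu\geq 2$) produced there are only known to satisfy $\psi_\nu\lrcorner u\in\mbox{Im}\,\partial$ and have no reason to remain in the essential subspace, so the hypothesis must hold on a class of forms closed under the construction (this is precisely what Remark \ref{Rem:sub-double-complex} formalises, and for class (3b) the natural Angella-Kasuya subcomplex computing the cohomology contains the counterexample forms above).

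What the paper does instead is a direct argument bypassing Theorem \ref{The:Introd_page-1-ddbar_unobstructedness} entirely: using [AK17, table 7] it identifies $E_2^{2,1}(X)=\langle [dz_{12\bar 3}]_{E_2}, [dz_{13\bar 2}]_{E_2}, [dz_{23\bar 1}]_{E_2}\rangle$, lifts it via $J^{2,1}_{\omega_0}$ (the representatives being closed and co-closed), and transports it through the inverse Calabi-Yau isomorphism to get $H^{0,1}_{ess}(X,TX^{1,0})=\langle d\bar z_i\otimes\frac{\partial}{\partial z_i}\,\mid\, i=1,2,3\rangle$, i.e. in Nakamura's notation only the parameters $t_{11}, t_{22}, t_{33}$ are switched on. Nakamura's explicit obstruction equations (3.3) involve only the other parameters $t_{i\lambda}$, $(i,\lambda)\neq(1,1),(2,2),(3,3)$, so they are trivially satisfied on the essential subspace, which gives unobstructedness of the essential family. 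A secondary point: your suggestion that $I^{(5)}$ might also serve as an example cannot work, since its \emph{full} Kuranishi family is unobstructed (Proposition \ref{Prop:unobstructedness}), so it cannot exhibit the contrast required by the statement.
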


The examples we exhibit with the above property are the {\bf Nakamura solvmanifolds of class (3b)} introduced in [Nak75] and taken up again in [Has10, $\S.3$] and [AK17].

\section{Background and preliminaries}\label{subsection:background}

We first review some general issues in the theory of deformations of complex structures, make an observation about some non-essential deformations and then review some specific properties of our class of page-$1$-$\partial\bar\partial$-manifolds recently introduced in [PSU20a].


\subsection{Small deformations of complex structures}\label{subsection:background-def}

Let $X$ be a compact complex manifold with $\mbox{dim}_\C X=n$. Recall that small deformations of the complex structure of $X$ over a base $B$ may be described by smooth $T^{1,\,0}X$-valued $(0,\,1)$-forms $\psi(t)\in C^\infty_{0,\,1}(X,\,T^{1,\,0}X)$ depending on $t\in B$ and satisfying the {\it integrability condition} (see e.g. [KS60]):

\begin{equation}\label{eqn:integrability-cond}\bar\partial\psi(t) = \frac{1}{2}\,[\psi(t),\,\psi(t)].\end{equation}

\noindent In fact, given such a $\psi$, the space of $(0,1)$-tangent vectors for the complex structure determined by $\psi$ is given by $(\Id+\psi)T^{0,1}_X$.

Let $t=(t_1,...,t_m)\in\C^m$ with $m=\mbox{dim}_\C H^{0,\,1}(X,\,T^{1,\,0}X)$. Writing \[
\psi(t) = \psi_1(t) + \sum\limits_{\nu=2}^{+\infty}\psi_\nu(t)\]

for the Taylor expansion of $\psi$ around $0$, (so each $\psi_\nu(t)$ is a homogeneous polynomial of degree $\nu$ in the variables $t=(t_1,\dots , t_m)$), the integrability condition is easily seen to be equivalent to $\delbar\psi_1(t)=0$ and the following sequence of conditions:

\begin{equation*}\label{eqn:integrability-condition_nu}\bar\partial\psi_\nu(t) = \frac{1}{2}\,\sum\limits_{\mu=1}^{\nu-1}[\psi_\mu(t),\,\psi_{\nu-\mu}(t)] \hspace{3ex} (\mbox{Eq.}\,\,(\nu)), \hspace{3ex} \nu\geq 2. \end{equation*}

\noindent The {\bf Kuranishi family} of $X$ is said to be {\bf unobstructed} if there exists a choice $\{\beta_1,\dots , \beta_m\}$ of representatives of cohomology classes that form a basis $\{[\beta_1],\dots , [\beta_m]\}$ of $H^{0,\,1}(X,\,T^{1,\,0}X)$ such that the integrability condition is satisfied (i.e. all the equations (Eq.\,$(\nu)$) are solvable) for any choice of parameters $(t_1,\dots , t_m)\in\C^m$ defining $\psi_1(t)=t_1\beta_1 + \dots + t_m\beta_m$.

By the fundamental result of [Kur62], when this qualitative condition is satisfied, a convergent solution $\psi(t)$ can be built for small $t$ through an inductive construction of the $\psi_\nu(t)$'s from the given $\psi_1(t)$ by solving the equations (Eq.\,$(\nu)$) and choosing at every step the solution with minimal $L^2$ norm for a pregiven Hermitian metric on $X$. The r.h.s. of each of these equations is $\bar\partial$-closed, so the only obstruction to solvability is the possible non-$\bar\partial$-exactness of the r.h.s. The resulting (germ of a) family $(X_t)_{t\in\Delta}$ of complex structures on $X$ is called the {\it Kuranishi family} of $X$. (It depends on the metric, but different choices of metrics yield isomorphic families.) If it is unobstructed, its base $B$ is smooth and can be viewed as an open ball about $0$ in the cohomology vector space $H^{0,\,1}(X,\,T^{1,\,0}X)$.

If, moreover, the canonical bundle $K_X$ of $X$ is {\bf trivial} (and we call $X$ a {\bf Calabi-Yau manifold} in this case), there exists a (unique up to scalar multiplication) smooth non-vanishing holomorphic $(n,\,0)$-form $u$ on $X$. It induces isomorphisms: \begin{eqnarray}\label{eqn:C-Y_isom_forms}C^{\infty}_{0, \, 1}(X, \, T^{1, \, 0}X)\ni\theta\mapsto\theta\lrcorner u\in C^{\infty}_{n-1,\, 1}(X, \,\C),\end{eqnarray} \begin{eqnarray}\label{eqn:C-Y_isom_cohom}H^{0,\,1}(X,\,T^{1,\,0}X)\ni[\theta]\mapsto [\theta\lrcorner u]\in H^{n-1,\,1}_{\bar\partial}(X)=E_1^{n-1,\,1}(X),\end{eqnarray}

  \noindent that we call the {\bf Calabi-Yau isomorphisms} (on forms, resp. in cohomology), where the operation denoted by $\cdot\lrcorner$ combines the contraction of $u$ by the $(1, \, 0)$-vector field component of $\theta$ with the exterior multiplication by the $(0,\,1)$-form component. In particular, if the Kuranishi family of a Calabi-Yau manifold $X$ is unobstructed, its base $B$ can be viewed as an open ball about $0$ in $H^{n-1,\,1}_{\bar\partial}(X)$.

\begin{Ex}\label{Ex:Kuranishi_I5} $(${\bf The Kuranishi family of the $5$-dimensional Iwasawa-type manifold}$)$

\vspace{1ex}

 {\rm Let us now consider the specific example of the complex parallelisable nilmanifold $X=I^{(5)}$ of complex dimension $5$. Its complex structure is described by five holomorphic $(1,\,0)$-forms $\varphi_1,\dots , \varphi_5$ satisfying the equations: \begin{equation*}\label{eqn:varphi_j-equations}d\varphi_1 = d\varphi_2 = 0, \hspace{2ex} d\varphi_3 = \varphi_1\wedge\varphi_2, \hspace{2ex} d\varphi_4 = \varphi_1\wedge\varphi_3, \hspace{2ex} d\varphi_5 = \varphi_2\wedge\varphi_3.\end{equation*}

\noindent If $\theta_1, \dots , \,\theta_5$ form the dual basis of $(1,\,0)$-vector fields, then $[\theta_i,\,\theta_j]=0$ except in the following cases: \begin{eqnarray*}\label{eqn:theta_j-equations} [\theta_1,\,\theta_2] =  -\theta_3, \hspace{3ex}   [\theta_1,\,\theta_3]  =  -\theta_4,  \hspace{3ex}  [\theta_2,\,\theta_3] = -\theta_5,\end{eqnarray*}

\noindent hence also \hspace{11ex} $[\theta_2,\,\theta_1] =  \theta_3,  \hspace{3ex}  [\theta_3,\,\theta_1]  =  \theta_4,  \hspace{3ex} [\theta_3,\,\theta_2] = \theta_5.$

\vspace{2ex}

\noindent In particular, \hspace{2ex} $H^{0,\,1}(X,\,T^{1.\,0}X) = \langle[\overline\varphi_1\otimes\theta_i],\,[\overline\varphi_2\otimes\theta_i]\,\mid\,i=1,\dots , 5\rangle$, so $\mbox{dim}_\C H^{0,\,1}(X,\,T^{1.\,0}X)=10$. }

\end{Ex}

\vspace{2ex}

 This manifold is the $5$-dimensional analogue of the $3$-dimensional Iwasawa manifold $I^{(3)}$. The following fact was observed in [Rol11].

\begin{Prop}\label{Prop:unobstructedness} The Kuranishi family of the $5$-dimensional nilmanifold $I^{(5)}$ is {\bf unobstructed}.

\end{Prop}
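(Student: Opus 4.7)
The plan is to construct, by direct calculation in the left-invariant Lie-algebra model, a polynomial (hence convergent) solution of the Maurer-Cartan equation \eqref{eqn:integrability-cond} valid for every $t\in\mathbb{C}^{10}$. Since $X=I^{(5)}=G/\Gamma$ is complex parallelisable, the dual vector fields $\theta_1,\dots,\theta_5$ are holomorphic and left-invariant; moreover $d\bar\varphi_j$ is of pure type $(0,2)$ for every $j$, so $L_{\theta_i}\bar\varphi_k=\iota_{\theta_i}\bar\partial\bar\varphi_k=0$. Hence the Schouten bracket on left-invariant $T^{1,0}X$-valued $(0,1)$-forms collapses to the purely algebraic expression
\[ [\bar\varphi_j\otimes\theta_i,\,\bar\varphi_k\otimes\theta_\ell]=\bar\varphi_j\wedge\bar\varphi_k\otimes[\theta_i,\theta_\ell]. \]

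Writing $\psi_1(t)=\bar\varphi_1\otimes A_1(t)+\bar\varphi_2\otimes A_2(t)$ with $A_j(t):=\sum_{i=1}^{5}t_{ij}\theta_i$, one checks that $[\psi_1(t)]$ traverses the basis of $H^{0,1}(X,T^{1,0}X)$ described in Example \ref{Ex:Kuranishi_I5}, and that $\tfrac{1}{2}[\psi_1,\psi_1]=\bar\varphi_1\wedge\bar\varphi_2\otimes[A_1(t),A_2(t)]$, the inner bracket lying in $\mathrm{span}(\theta_3,\theta_4,\theta_5)$ by the structure constants. Using $\bar\partial\bar\varphi_3=\bar\varphi_1\wedge\bar\varphi_2$, I would set
\[ \psi_2(t):=\bar\varphi_3\otimes[A_1(t),A_2(t)], \]
which solves the second Maurer-Cartan equation. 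A second round of computation gives
\[ [\psi_1,\psi_2]=\bar\varphi_1\wedge\bar\varphi_3\otimes[A_1,[A_1,A_2]]+\bar\varphi_2\wedge\bar\varphi_3\otimes[A_2,[A_1,A_2]], \]
and the centrality of $\theta_4,\theta_5$ forces $[A_j,[A_1,A_2]]\in\mathrm{span}(\theta_4,\theta_5)$. Invoking $\bar\partial\bar\varphi_4=\bar\varphi_1\wedge\bar\varphi_3$ and $\bar\partial\bar\varphi_5=\bar\varphi_2\wedge\bar\varphi_3$, the natural choice
\[ \psi_3(t):=\bar\varphi_4\otimes[A_1,[A_1,A_2]]+\bar\varphi_5\otimes[A_2,[A_1,A_2]] \]
solves the third equation.

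The crucial point is that the recursion then truncates. Indeed $[\psi_2,\psi_2]=0$ because $\bar\varphi_3\wedge\bar\varphi_3=0$, and every bracket $[\psi_\mu,\psi_\nu]$ with $\mu+\nu\geq 4$ vanishes: either one of the factors is $\psi_3$, whose $T^{1,0}$-values lie in the centre $\mathrm{span}(\theta_4,\theta_5)$ of $\mathfrak{g}$, or both factors are $\psi_2$ with repeated $\bar\varphi_3$. Consequently $\psi(t):=\psi_1(t)+\psi_2(t)+\psi_3(t)$ is a genuine polynomial solution of \eqref{eqn:integrability-cond} for every $t\in\mathbb{C}^{10}$, which exhibits a smooth $10$-dimensional Kuranishi base and hence establishes unobstructedness.

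The only non-formal step is justifying that this left-invariant family really is the Kuranishi family (and not merely a left-invariant subfamily of it); this is where I would invoke the theorem of Rollenske guaranteeing that, for $I^{(n)}$, all small deformations are represented by left-invariant Beltrami differentials, so the obstructions computed inside the finite-dimensional Chevalley-Eilenberg complex coincide with the honest analytic obstructions. This identification is the only delicate input; granted it, the computation above is structural and the truncation after three iterations follows purely from the fact that $\mathfrak{g}$ is $3$-step nilpotent with central socle $\mathrm{span}(\theta_4,\theta_5)$.
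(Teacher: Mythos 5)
Your proof is correct, but it takes a genuinely different route from the paper: the paper's entire proof of Proposition \ref{Prop:unobstructedness} is a citation of Rollenske's work [Rol11], with no computation given. Your construction checks out against the structure equations of Example \ref{Ex:Kuranishi_I5}: since $d\varphi_3=\varphi_1\wedge\varphi_2$, $d\varphi_4=\varphi_1\wedge\varphi_3$, $d\varphi_5=\varphi_2\wedge\varphi_3$, one has $\bar\partial\bar\varphi_3=\bar\varphi_1\wedge\bar\varphi_2$, $\bar\partial\bar\varphi_4=\bar\varphi_1\wedge\bar\varphi_3$, $\bar\partial\bar\varphi_5=\bar\varphi_2\wedge\bar\varphi_3$, so your $\psi_2$ and $\psi_3$ do solve the order-$2$ and order-$3$ equations, and the recursion indeed truncates because the values of $\psi_3$ lie in the centre $\langle\theta_4,\theta_5\rangle$ of $\fg$ and $\bar\varphi_3\wedge\bar\varphi_3=0$; hence $\psi=\psi_1+\psi_2+\psi_3$ is a polynomial Maurer--Cartan solution for every $t\in\C^{10}$. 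What your argument buys is an explicit, self-contained, closed-form family exhibiting the smooth base; what the citation buys is brevity and Rollenske's general structure theory (and note that the paper in effect contains a second, independent proof as well: Lemma \ref{Lem:Iwasawas_def-hypotheses} together with Theorem \ref{The:Introd_page-1-ddbar_unobstructedness} applies to $I^{(5)}$). One simplification worth recording: your closing appeal to Rollenske's left-invariance theorem is stronger than what you need, since left-invariant forms are honest forms on $X$, so your $\psi(t)$ solves the integrability equation (\ref{eqn:integrability-cond}) in the genuine Dolbeault complex, not merely in a finite-dimensional model. Combined with the fact that the classes $[\bar\varphi_\lambda\otimes\theta_i]$, $\lambda\in\{1,2\}$, $i\in\{1,\dots,5\}$, form a basis of $H^{0,\,1}(X,\,T^{1,\,0}X)$ (Example \ref{Ex:Kuranishi_I5}, which rests on the invariant-cohomology theorem of [Sak76]), this already fulfils the definition of unobstructedness adopted in $\S.$\ref{subsection:background-def}, namely solvability of all the equations (Eq.\,$(\nu)$) for every choice of parameters and some choice of basis representatives; alternatively, completeness of the Kuranishi family plus the fact that your family has bijective Kodaira--Spencer map at $0$ yields smoothness of the Kuranishi base by the standard argument. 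The only invariance input you genuinely need concerns cohomology, not deformations.
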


\noindent {\it Proof.} It was given in [Rol11].  \hfill $\Box$

\subsection{Cohomological triviality of complex parallelisable deformations of nilmanifolds}\label{subsection:cohomological triviality}

In this subsection, we pave the way for the later introduction in $\S.$\ref{section:page-1_essential-deformations} of the notion of {\it small essential deformations} by exhibiting the opposite type of objects: a class of superfluous (hence dispensable) small deformations. Specifically, we show that the {\it complex parallelisable} small deformations of a {\it complex parallelisable nilmanifold} have the same geometry (from the cohomological point of view -- see Corollary \ref{Cor:small-def_cohom-isom} -- and as far as the universal cover is concerned -- see Theorem \ref{The:Introd_universal-covers_def}) as the original manifold.

For a complex parallelisable nilmanifold $X=G/\Gamma$, where $G$ is a simply connected nilpotent complex Lie group and $\Gamma\subset G$ is a lattice, the Dolbeault cohomology can be computed by left invariant forms (cf. [Sak76]). In particular, one has (cf. [Nak75]):

\[H^{0,1}(X,T^{1,0}X)\cong H^{0,\,1}(X,\C)\otimes \fg^{1,0}=(\ker\delbar\cap A_\fg^{0,1})\otimes \fg^{1,0},\] where $\fg$ is the Lie algebra of $G$.

Furthermore, $\fg$ is actually a {\it complex} Lie algebra and $\fg^{1,0}\subset \fg_\C$ is a {\it complex} subalgebra, where $\fg_\C$ is the complexification of $\fg$. In fact, one has an identification of complex Lie algebras $\fg\cong \fg^{1,0}$ given by $z\mapsto \frac 1 2 (z-iJz)$. In what follows, we will always tacitly use the above identifications.

Of particular interest are the cohomology classes in \[
H_{par}(X):=H^{0,\,1}(X,\C)\otimes Z(\fg)=(\ker\delbar\cap A^{0,1}_\fg)\otimes Z(\fg)\subset H^{0,1}(X,T^{1,0}X),
\]

where $Z(\fg)$ is the centre of $\fg$, (which coincides with the Lie algebra of the centre $Z(G)$ of $G$ since $G$ is connected). The last inclusion is a consequence of the identification $\fg\cong\fg^{1,0}$.  Elements in $H_{par}(X)$ are called {\it infinitesimally complex parallelisable deformations} of $X$ due to the following

\begin{The}([Rol11]) Let $X=G/\Gamma$ be a complex parallelisable nilmanifold. Let $\mu\in H^{0,1}(X,T^{1,0}X)$. The following statements are equivalent.

  \vspace{1ex}

  $(1)$\, $\mu\in H_{par}(X)$.

  \vspace{1ex}

  $(2)$\, For all $X,Y\in \fg$, one has $[X,\mu \overline Y]=0$.

  \vspace{1ex}

$(3)$\, $t\mu$ induces a $1$-parameter family of complex parallelisable manifolds for $t$ small enough.

\vspace{1ex}

Moreover, for each $\mu$ satisfying $(1)$, $(2)$ or $(3)$, the sequence of equations $(\mbox{Eq.}\,\,(\nu))_{\nu\geq 1}$ (equivalently, (\ref{eqn:integrability-cond})) is solvable with $\psi=\psi_1=\mu$.

\end{The}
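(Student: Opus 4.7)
My plan is to prove $(1)\Leftrightarrow(2)$ by a direct algebraic identification, then establish the cycle $(2)\Rightarrow(3)$ and $(3)\Rightarrow(2)$, and extract the ``moreover'' statement as a byproduct of $(2)\Rightarrow(3)$.

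For $(1)\Leftrightarrow(2)$: under the Sakane identification recalled above, $\mu$ corresponds to a $\C$-linear map $\overline{\fg^{1,0}}\to \fg^{1,0}\cong\fg$. Condition $(2)$ asserts exactly that the image of this map is contained in $Z(\fg)$, i.e. $\mu \in H^{0,1}(X,\C)\otimes Z(\fg) = H_{par}(X)$. One only needs to check that, on the left-invariant level, $\bar\partial$-closedness of a $Z(\fg)$-valued $(0,1)$-form is automatic, which follows from the complex Lie algebra structure: the dual Chevalley-Eilenberg differential acts on $(0,1)$-forms by structure constants of $\fg$ acting on the image, and these land in $[\fg,Z(\fg)]=0$.

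For $(2)\Rightarrow(3)$ together with the ``moreover'' part, I first note the pointwise formula
\[
[\mu,\mu](\bar X,\bar Y) \;=\; 2\,[\mu(\bar X),\,\mu(\bar Y)],
\]
which vanishes under $(2)$ since $\mu(\bar X),\mu(\bar Y)\in Z(\fg)$. Combined with $\bar\partial\mu=0$ (from harmonicity of left-invariant closed forms on nilmanifolds), this shows $\psi(t):=t\mu$ satisfies the Maurer--Cartan equation (\ref{eqn:integrability-cond}) identically in $t$; hence each $(\mbox{Eq.}\,(\nu))$ is solved by $\psi_1=\mu$, $\psi_\nu=0$ for $\nu\geq 2$. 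To upgrade integrability to complex parallelisability of $J_t$, I would exhibit an explicit holomorphic trivialisation of $T^{1,0}_{J_t}X$ by perturbing a left-invariant $(1,0)$-coframe $\varphi_1,\dots,\varphi_n$ to $\tilde\varphi_i:=\varphi_i+t\,\eta_i$, where $\eta_i$ is the $(0,1)$-form built from $\mu$ and $\varphi_i$ via the natural contraction pairing, and then verify that the structure equations for $d\tilde\varphi_i$ involve only wedge products of the $\tilde\varphi_j$'s. Centrality of the image of $\mu$ is what kills every term that would otherwise create an anti-holomorphic or mixed-type contribution, because all such terms carry a factor of the form $[\,\cdot\,,\,\mu(\overline{\,\cdot\,})]$.

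For $(3)\Rightarrow(2)$: if $J_t$ is complex parallelisable for small $t$, then $T^{1,0}_{J_t}X$ is closed under the Lie bracket. Writing a basis of deformed $(1,0)$-vector fields as perturbations of the original left-invariant frame by terms involving $\mu$, and expanding bracket-closedness to first order in $t$, the leading obstruction is precisely $[X,\mu(\bar Y)]$ for $X,Y$ ranging over $\fg$; its vanishing is condition $(2)$.

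The main obstacle will be the frame construction in the $(2)\Rightarrow(3)$ step: proving not merely infinitesimal but honest (all-orders) complex parallelisability requires checking that the perturbed coframe trivialises the deformed holomorphic tangent bundle globally and that its exterior derivatives close up within the perturbed coframe, with no $\bar{\tilde\varphi}_j$ terms surviving. Centrality of the image of $\mu$ is what makes this work, but organising the bookkeeping cleanly (dual actions of $\mu$ on forms versus on vectors, plus the role of nilpotency in ensuring the expansion truncates) is the delicate piece.
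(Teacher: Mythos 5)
The paper itself gives no proof of this theorem: it is quoted verbatim from [Rol11], so your proposal has to stand on its own merits. Its first two-thirds essentially do. The equivalence $(1)\Leftrightarrow(2)$ is indeed linear algebra once one uses the Sakane/Nakamura identification: condition $(2)$ says exactly that the image of $\mu$, viewed as a map $\fg^{0,1}\to\fg^{1,0}\cong\fg$, lies in $Z(\fg)$. (Your side remark that $\bar\partial$-closedness of a $Z(\fg)$-valued invariant $(0,1)$-form is ``automatic'' is false --- on the Iwasawa manifold $\bar\varphi_3\otimes Z$ has $\bar\partial\bar\varphi_3\neq 0$ --- but it is also unnecessary, since the form part of $\mu$ is $\bar\partial$-closed by hypothesis.) For $(2)\Rightarrow(3)$ and the ``moreover'' clause, your formula $[\mu,\mu](\bar X,\bar Y)=2[\mu(\bar X),\mu(\bar Y)]$ is not a pointwise identity for general $T^{1,0}X$-valued forms (the bracket has derivative terms), but it does hold for left-invariant $\mu$ on a complex parallelisable nilmanifold because $d$ of any invariant $(0,1)$-form has no $(1,1)$-part; so Maurer--Cartan holds with $\psi=t\mu$, $\psi_\nu=0$ for $\nu\geq 2$. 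Your coframe construction also closes up, and more cleanly than you fear: with $\tilde\varphi_i=\varphi_i-t\,\mu\lrcorner\varphi_i$ one has $d\tilde\varphi_i=d\varphi_i$ exactly (the form parts of $\mu$ are $d$-closed and the coefficients $\varphi_i(Z_l)$ are constants), and every cross term after substituting $\varphi_j=\tilde\varphi_j+t\,\mu\lrcorner\varphi_j$ carries a factor $\varphi_i([V_j,Z_l])=0$ by centrality, giving $d\tilde\varphi_i=-\frac{1}{2}\sum c^i_{jk}\,\tilde\varphi_j\wedge\tilde\varphi_k$ with the original structure constants; no nilpotency or truncation bookkeeping is needed.

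The genuine gap is in $(3)\Rightarrow(2)$. ``$T^{1,0}_{J_t}X$ is closed under the Lie bracket'' is just integrability (Newlander--Nirenberg); it holds for \emph{every} complex structure and says nothing about parallelisability. Worse, expanding it does not produce the term you claim: the deformed $(1,0)$-fields are $W_i=V_i+\bar t\,\overline{\mu(\bar V_i)}$, and the cross terms $[V_i,\overline{\mu(\bar V_j)}]$ lie in $[\fg^{1,0},\fg^{0,1}]=0$ because the \emph{central fibre} is complex parallelisable, so bracket-closure of the deformed $(1,0)$-distribution only reproduces the Maurer--Cartan equation, never $[X,\mu\bar Y]$. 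The condition that actually yields $(2)$ is the vanishing of the \emph{mixed} brackets, $[\fg^{1,0}_t,\fg^{0,1}_t]=0$ (equivalently: $J_t$ makes $G$ a complex Lie group): expanding $[V_i+\bar t\,\overline{\mu(\bar V_i)},\,\bar V_j+t\,\mu(\bar V_j)]$ and separating types forces $[V_i,\mu(\bar V_j)]=0$. But to invoke that condition you must first explain why complex parallelisability of the compact manifold $X_t$ --- which by Wang's theorem only gives a biholomorphism with some quotient of some complex Lie group --- implies this Lie-algebra-level identity for the given invariant structure $J_t$; i.e. you must show that the holomorphic trivialisation can be taken left-invariant (and, more basically, that the small deformations in play are left-invariant at all). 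That reduction, via Mal'cev rigidity and the structure theory of holomorphic objects on such nilmanifolds, is precisely the substantive content of Rollenske's theorem, and it is missing from your sketch.
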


We will show that the cohomology is the same for all the complex parallelisable small deformations of a given complex parallelisable nilmanifold $X=G/\Gamma$. This will be a consequence of the following

\begin{The}\label{The:Introd_universal-covers_def} Let $X=G/\Gamma$ be a {\bf complex parallelisable nilmanifold}, where $G$ is a simply connected nilpotent complex Lie group and $\Gamma\subset G$ is a lattice. The universal cover of any complex parallelisable small deformation of $X$ is isomorphic to $G$ as a complex Lie groups. 

\end{The}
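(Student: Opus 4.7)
By the theorem of [Rol11] quoted above, any complex parallelisable small deformation of $X$ is determined by a single $\mu\in H_{par}(X)$, the integrability condition $\bar\partial\mu=\frac{1}{2}[\mu,\mu]$ reducing to $\bar\partial\mu=0$ since $\mu$ takes central values. The plan is to lift the deformation to the universal cover $G$, verify that the lifted structure makes $G$ a complex Lie group, and then exhibit an explicit isomorphism with $(G,J)$. Since $\mu$ admits a left-invariant representative on $G$ (via averaging, using that it comes from $H_{par}(X)$), the deformed complex structure on $X_\mu$ lifts to a left-invariant complex structure $\tilde J_\mu$ on $G$, whose $(0,1)$-subspace of $\fg_\C$ is $\fg^{0,1}_\mu:=\{v+\mu(v):v\in\fg^{0,1}\}$, with $(1,0)$-subspace $\fg^{1,0}_\mu:=\overline{\fg^{0,1}_\mu}$. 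Since $\Gamma$ acts on $G$ by left translations and $\tilde J_\mu$ is left-invariant, $\Gamma$ acts by biholomorphisms, so the universal cover of $X_\mu$ is the simply connected complex manifold $(G,\tilde J_\mu)$.

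Next, I would verify that $(G,\tilde J_\mu)$ is a complex Lie group. A left-invariant complex structure on a simply connected real Lie group makes it a complex Lie group if and only if it is bi-invariant, which is equivalent to the vanishing of $[\fg^{1,0},\fg^{0,1}]$ in $\fg_\C$. Expanding
\[
[w+\overline{\mu(\bar w)},\,v+\mu(v)]\quad\text{for }w\in\fg^{1,0},\,v\in\fg^{0,1},
\]
the principal term $[w,v]$ vanishes by the original complex parallelisability, while the three cross terms vanish because $\mu(v)$ and $\overline{\mu(\bar w)}$ lie in $Z(\fg_\C)$ (the centre being stable under complex conjugation in $\fg_\C$). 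Hence $[\fg^{1,0}_\mu,\fg^{0,1}_\mu]=0$, confirming that $(G,\tilde J_\mu)$ is a complex Lie group with Lie algebra $\fg^{1,0}_\mu\subset\fg_\C$.

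Finally, to identify $(G,\tilde J_\mu)$ with $G$, I would construct an explicit complex Lie algebra isomorphism $\phi:\fg^{1,0}\to\fg^{1,0}_\mu$ by $w\mapsto w+\bar\mu(w)$, where $\bar\mu(w):=\overline{\mu(\bar w)}\in Z(\fg^{0,1})$. $\C$-linearity and bijectivity are immediate. For bracket compatibility, centrality of $\bar\mu$ kills the cross terms in $[\phi(w_1),\phi(w_2)]$, leaving $[w_1,w_2]$; matching with $\phi([w_1,w_2])=[w_1,w_2]+\bar\mu([w_1,w_2])$ requires $\bar\mu$ to vanish on the derived algebra $[\fg^{1,0},\fg^{1,0}]$, and I expect this to be the main obstacle. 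Writing $\mu=\sum_i\bar\alpha_i\otimes Z_i$ with $Z_i\in Z(\fg)$ and $\bar\partial\bar\alpha_i=0$, the Chevalley--Eilenberg formula $d\bar\alpha_i(X,Y)=-\bar\alpha_i([X,Y])$ for left-invariant forms shows that the $(0,2)$-component of $d\bar\alpha_i$ being zero forces $\bar\alpha_i|_{[\fg^{0,1},\fg^{0,1}]}=0$; conjugating yields $\alpha_i|_{[\fg^{1,0},\fg^{1,0}]}=0$, so $\bar\mu=\sum_i\alpha_i\otimes\bar Z_i$ indeed vanishes on commutators of $\fg^{1,0}$. Since $G$ is simply connected, $\phi$ integrates uniquely to a complex Lie group isomorphism $G\to(G,\tilde J_\mu)$, completing the proof.
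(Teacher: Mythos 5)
Your proof is correct and follows essentially the same route as the paper: the paper's Claim 2.6 is precisely your map $\Id+\mu$ on $\fg^{0,1}$ and $\Id+\bar\mu$ on $\fg^{1,0}$, shown to be a Lie algebra isomorphism of $\fg_\C$ via the same three ingredients you use (complex parallelisability killing mixed brackets, centrality of the values of $\mu$ killing cross terms, and the Cartan/Chevalley--Eilenberg formula together with $\bar\partial\mu=0$ killing $\mu$ on commutators). Your only repackaging is to split the argument into ``$(G,\tilde J_\mu)$ is a complex Lie group'' plus an isomorphism of the holomorphic Lie algebras $\fg^{1,0}\to\fg^{1,0}_\mu$, whereas the paper verifies all bracket cases for the single complexified map at once; the computations are identical.
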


\noindent{\it Proof.} It is known that, for any left-invariant complex structure $J$ making $X$ into a complex parallelisable nilmanifold $X=G/\Gamma$, all the small deformations of $J$ are again left-invariant (cf. [Rol11, sect. 4]). In particular, they are again of the form $G/\Gamma$, but now with a possibly different, yet still left invariant, complex structure. Thus, differentiably, the universal cover is always $G$, which is determined entirely by $\fg$ through the Lie-group/Lie-algebra correspondence. However, the complex structure on $G$ varies with $\mu$ but since it remains left-invariant, it is determined by the splitting $\fg_\C=\fg^{0,1}_\mu\oplus\fg^{1,0}_\mu$ into $i$- and $(-i)$-eigenspaces, which can be computed from the complex structure of the central fibre via the equalities $\fg^{0,1}_\mu=(\Id+\mu)\fg^{0,1}$ and $\fg^{1,0}_\mu=(\Id+\bar\mu)\fg^{1,0}$.

{\begin{Claim}\label{Claim:Lie-alg-isom} The linear map of vector spaces

\[
\alpha: \fg_\C\longrightarrow \fg_\C,
\]

defined as $(\Id+\mu)$ on $\fg^{0,1}_{0}$ and as $(\Id+\overline{\mu})$ on $\fg^{1,0}_{0}$, is an isomorphism of Lie algebras.

\end{Claim}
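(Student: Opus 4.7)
My plan is to prove the claim in two stages: first that $\alpha$ is a linear isomorphism of $\gc$, then that it preserves brackets. For the bijectivity, $\alpha$ carries the splitting $\gc=\fg^{1,0}_0\oplus\fg^{0,1}_0$ onto $\fg^{1,0}_\mu\oplus\fg^{0,1}_\mu$, which is again all of $\gc$ (these being the $\pm i$-eigenspaces of the deformed complex structure). Equivalently, in block form with respect to the original splitting, $\alpha$ is $\bigl(\begin{smallmatrix}\Id & \mu\\ \bar\mu & \Id\end{smallmatrix}\bigr)$, manifestly invertible for $\mu$ small.

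For the bracket-preserving property, I would first isolate two properties of $\mu\in H_{par}(X)$ as auxiliary observations, as these are the only nontrivial inputs.

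\textbf{(a) Centrality.} Since $\mu\in H^{0,1}(X,\C)\otimes Z(\fg)$ and under the identification $\fg\cong\fg^{1,0}$ the centre is $Z(\fg^{1,0})$, the image of $\mu$ (and hence of $\bar\mu$) lies in $Z(\fg^{1,0})$ (resp.\ $\overline{Z(\fg^{1,0})}\subset\fg^{0,1}$). Using also that $G$ is a \emph{complex} Lie group, so that $[\fg^{1,0},\fg^{0,1}]=0$ in $\gc$, this means $\mu(\cdot)$ and $\bar\mu(\cdot)$ are central in the whole of $\gc$.

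\textbf{(b) Vanishing on brackets.} Writing $\mu=\sum\bar\alpha_i\otimes Z_i$ with $\bar\alpha_i\in\ker\delbar\cap A^{0,1}_\fg$, the Maurer--Cartan formula on left-invariant forms gives $\delbar\bar\alpha_i(\bar V_1,\bar V_2)=-\bar\alpha_i([\bar V_1,\bar V_2])$ for $\bar V_1,\bar V_2\in\fg^{0,1}$, so $\bar\alpha_i$ and hence $\mu$ vanishes on $[\fg^{0,1},\fg^{0,1}]$. Conjugating, $\bar\mu$ vanishes on $[\fg^{1,0},\fg^{1,0}]$.

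With (a) and (b) in hand, the verification of $\alpha([X,Y])=[\alpha(X),\alpha(Y)]$ is mechanical. Writing $X=X^{1,0}+X^{0,1}$ and $Y=Y^{1,0}+Y^{0,1}$ and expanding $[\alpha(X),\alpha(Y)]$, property (a) annihilates every bracket containing a $\mu$- or $\bar\mu$-term, and $[\fg^{1,0},\fg^{0,1}]=0$ annihilates the pure mixed brackets, leaving
\[
[\alpha(X),\alpha(Y)]=[X^{1,0},Y^{1,0}]+[X^{0,1},Y^{0,1}].
\]
On the other side, $[X,Y]=[X^{1,0},Y^{1,0}]+[X^{0,1},Y^{0,1}]$ for the same reason, and applying $\alpha$ produces this expression plus the correction $\bar\mu[X^{1,0},Y^{1,0}]+\mu[X^{0,1},Y^{0,1}]$, which vanishes by (b). The two sides match.

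The main obstacle, and the place where I would spend most of the write-up, is property (b): extracting from the cohomological condition ``$\mu\in\ker\delbar\otimes Z(\fg)$'' the pointwise Lie-algebraic assertion that $\mu$ kills $[\fg^{0,1},\fg^{0,1}]$. Once (a) and (b) are cleanly stated the bracket computation is essentially automatic.
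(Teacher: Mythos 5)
Your proposal is correct and follows essentially the same route as the paper's proof: both rest on the two key facts that the values of $\mu$ and $\bar\mu$ are central in all of $\gc$ (combining $\mu\in H^{0,1}(X,\C)\otimes Z(\fg)$ with $[\fg^{1,0},\fg^{0,1}]=0$ for a complex Lie group) and that $\mu$, resp. $\bar\mu$, kills brackets of $(0,1)$-, resp. $(1,0)$-vectors via Cartan's formula and $\delbar$-closedness. The only cosmetic difference is that you state these as separate lemmas and verify bracket-preservation on general decomposed elements, whereas the paper runs the same computation inline, case by case on pure types.
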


  \noindent {\it Proof of Claim \ref{Claim:Lie-alg-isom}.} Since $\mu$ is small, $\alpha$ is an isomorphism of vector spaces and the point is to show that it is also a morphism of Lie algebras. We use $[X,\bar Y]=0$ for all $X\in \fg^{1,0}$ and $\bar Y\in \fg^{0,1}$. Since $\mu\in H^{0,\,1}(X,\C)\otimes Z(\fg)$, one also has $[X,\mu\bar Y]=0$, so $[Z,\mu \bar Y]=[Z,\bar\mu X]=0$ for any $Z\in \fg_\C$. So, for $\bar X,\bar Y\in \fg^{0,1}$, we have:  \begin{eqnarray*}[\alpha \bar X,\alpha \bar Y] &= & [\bar X,\bar Y]+[\mu\bar X,\mu \bar Y] +[\mu \bar X,\bar Y]+[\bar X,\mu\bar Y] \\
& = & [\bar X, \bar Y] =[\bar X,\bar Y]+ \mu([\bar X,\bar Y]) =\alpha([\bar X,\bar Y]).\\
\end{eqnarray*} Regarding the last-but-one equality, recall Cartan's formula $(\delbar\bar\eta)(\bar X,\bar Y)=-\bar\eta([\bar X,\bar Y])$ that holds for any left-invariant $(0,1)$-form $\bar\eta$ and that $\mu\in \ker\delbar\cap A^{0,1}_{\fg}\otimes Z(\fg)$. Therefore, $\mu([\bar X, \bar Y])=-(\delbar\mu)(\bar X,\bar Y)=0$. By a similar argument, $[\alpha X,\alpha Y]=\alpha([X,Y])$ for $X,Y\in\fg^{1,0}$.

Finally, for all $X\in\fg^{1,\,0}$ and all $\bar Y\in \fg^{0,1}$, we have: \begin{eqnarray*}
[\alpha X,\alpha \bar Y] &= & [X,\bar Y] + [\bar\mu X,\mu\bar Y] +[\bar\mu X,\bar Y]+[X,\mu\bar Y]\\
&= & 0 = \alpha([X,\bar Y]).
\end{eqnarray*}

Summing up, $\alpha$ is an isomorphism of Lie algebras. Thus, we get an induced isomorphism $G\rightarrow G$ which by construction is compatible with the complex structures corresponding to $0$ resp. $\mu$.

This finishes the proof of Claim \ref{Claim:Lie-alg-isom} and that of Theorem \ref{The:Introd_universal-covers_def}.

\hfill $\Box$

\begin{Rem} Note that Theorem \ref{The:Introd_universal-covers_def} does \textbf{not} state that the corresponding small deformations of $X$ are themselves biholomorphic. For example, when $X$ is a torus, we only recover the fact that the universal cover of each small deformation is $\C^n$ (while, of course, the lattice changes).

\end{Rem}

For the last two cohomologies mentioned in the following statement, see Definition 3.4. in [PSU20b], recalled as Definition \ref{Def:E_r-BC_E_r-A} below.

\begin{Cor}\label{Cor:small-def_cohom-isom} Let $X'$ be a complex parallelisable {\bf small deformation} of a complex parallelisable nilmanifold $X$. Then, there exists an {\bf isomorphism} between the double complexes of left invariant forms on $X$ and $X'$.

  In particular, there exist {\bf isomorphisms} $H(X)\cong H(X')$, where $H$ stands for any cohomology of one of the following types: Dolbeault, Fr\"olicher $E_r$,  De Rham, Bott-Chern, Aeppli and higher-page Bott-Chern and Aeppli.

\end{Cor}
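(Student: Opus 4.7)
The plan is to extract the proof essentially from the proof of Theorem \ref{The:Introd_universal-covers_def} and combine it with the standard fact that, for complex parallelisable nilmanifolds and their small deformations, the various cohomologies can be computed by left-invariant forms.

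First, I would recall that Claim \ref{Claim:Lie-alg-isom} already produced a linear isomorphism $\alpha:\fg_\C\to\fg_\C$ which is a Lie algebra isomorphism and which, by its very construction, sends $\fg^{0,1}_0$ to $\fg^{0,1}_\mu$ and $\fg^{1,0}_0$ to $\fg^{1,0}_\mu$. In other words, $\alpha$ is bigraded with respect to the two Hodge decompositions. Dualising, I obtain an isomorphism $\alpha^*$ of bigraded vector spaces from $\Lambda^{\bullet,\bullet}(\fg_\C,J_\mu)^*$ to $\Lambda^{\bullet,\bullet}(\fg_\C,J_0)^*$. Since the Chevalley-Eilenberg differential on left-invariant forms is entirely determined by the Lie bracket via Cartan's formula $d\eta(X,Y)=-\eta([X,Y])$, and since $\alpha$ preserves the bracket, $\alpha^*$ intertwines the two differentials; splitting $d=\partial+\bar\partial$ according to the respective bigradings, $\alpha^*$ intertwines $\partial$ with $\partial$ and $\bar\partial$ with $\bar\partial$. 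This gives the desired isomorphism of double complexes of left-invariant forms on $X$ and $X'$.

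Next, for the ``In particular'' clause, I would invoke the fact that for a complex parallelisable nilmanifold, left-invariant forms compute the Dolbeault, de Rham, Bott--Chern and Aeppli cohomologies (Sakane [Sak76] for Dolbeault, Nomizu for de Rham, and the analogous results for $\partial\bar\partial$-type cohomologies obtained from these). The same holds for $X'$ thanks to [Rol11, $\S 4$] which shows that all small deformations of a complex parallelisable nilmanifold remain left-invariant. The higher-page cohomologies (Fr\"olicher $E_r$, and the higher-page Bott--Chern and Aeppli groups defined in [PSU20b]) are all functorial constructions from a double complex, so any morphism of double complexes that induces isomorphisms at the $E_1$-level (equivalently, Dolbeault) also induces isomorphisms on all these further invariants. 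In particular, the inclusion of left-invariant forms into all forms, being a quasi-isomorphism for $\bar\partial$ on both $X$ and $X'$, automatically induces isomorphisms on all the cohomologies in the list. Combining with the isomorphism of double complexes from the first paragraph yields the stated isomorphisms $H(X)\cong H(X')$.

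The main subtlety, and where I would spend most care, is the last point: verifying that the various higher-page and refined cohomologies of [PSU20b] are in fact computable from left-invariant forms on both $X$ and $X'$. The route I would take is abstract rather than cohomology-by-cohomology: one shows that if $\iota:A^{\bullet,\bullet}\hookrightarrow B^{\bullet,\bullet}$ is an inclusion of bounded double complexes of $\C$-vector spaces which is an $E_1$-isomorphism of the Fr\"olicher spectral sequence, then it is an $E_r$-isomorphism for all $r\ge 1$ and also induces isomorphisms on all Bott--Chern and Aeppli (and higher-page analogues thereof) cohomologies. Given this, the known $E_1$-isomorphism for complex parallelisable nilmanifolds (and, by Rollenske's stability, their complex parallelisable small deformations) upgrades to isomorphisms on all the listed cohomologies, and the isomorphism of double complexes of invariant forms then transports them from $X$ to $X'$.
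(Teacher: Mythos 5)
Your proposal is correct and follows essentially the same route as the paper: the first statement comes from the Lie-algebra isomorphism of Claim \ref{Claim:Lie-alg-isom} (which is bigraded by construction, hence dualises to an isomorphism of the invariant double complexes), and the second from the abstract fact that an $E_1$-isomorphism of double complexes induces isomorphisms on all the listed cohomologies (this is exactly [Ste21, Prop.~12], which the paper cites rather than reproves) combined with Sakane's theorem that invariant forms compute Dolbeault cohomology for complex parallelisable nilmanifolds and Rollenske's result that small deformations stay left-invariant. The only point to keep in mind is that ``$E_1$-isomorphism'' in the sense of [Ste21] requires both Dolbeault and conjugate Dolbeault isomorphisms, but, as you implicitly use, the latter is automatic here because the subcomplex of left-invariant forms is stable under conjugation.
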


\begin{proof} The first statement follows from Claim \ref{Claim:Lie-alg-isom}, since the double complex of left invariant forms can be computed in terms of the Lie-algebra with its complex structure, while the second follows from [Ste21, Prop. 12] and  the fact that for any nilmanifold $X=G/\Gamma$, the inclusion of the double complex of left-invariant forms on $G$ into all forms on $X$ is an {\bf $E_1$-isomorphism}. (This is conjectured to hold for all complex nilmanifolds and it is known for complex parallelisable ones, see [Sak76]).

\end{proof}

\subsection{Brief review of page-$r$-$\partial\bar\partial$-manifolds}\label{subsection:review_page-r-ddbar}

Let $X$ be a compact complex manifold with $\mbox{dim}_\C X=n$.

\subsubsection{$\partial\bar\partial$-{\bf manifolds}}

Recall that $X$ is said to be a $\partial\bar\partial$-{\bf manifold} if for any $d$-closed {\it pure-type} form $u$ on $X$, the following exactness properties are equivalent: \\

\hspace{10ex} $u$ is $d$-exact $\Longleftrightarrow$ $u$ is $\partial$-exact $\Longleftrightarrow$ $u$ is $\bar\partial$-exact $\Longleftrightarrow$ $u$ is $\partial\bar\partial$-exact.

\vspace{1ex}

The $\partial\bar\partial$-property of $X$ is equivalent to $X$ admitting a canonical {\it Hodge decomposition}. This implies that $X$ admits a canonical {\it Hodge symmetry} as well. These are properties of the Dolbeault cohomology groups $H^{p,\,q}_{\bar\partial}(X,\,\C) = E_1^{p,\,q}(X)$ of $X$. They lie on the first page of the Fr\"olicher spectral sequence (FSS) of $X$. The $\partial\bar\partial$-property of $X$ can also be characterised in terms of the Bott-Chern and Aeppli cohomologies. (See Theorem and Definition \ref{The-Def:main-def-prop_introd} below in the case $r=1$ for precise statements of these properties.)

\subsubsection{Terminology}

Recall that the {\bf Fr\"olicher spectral sequence (FSS)} of $X$ is a finite collection of complexes (called {\it pages}) that inductively refine the Dolbeault cohomology of $X$ until it becomes (non-canonically) isomorphic to the De Rham cohomology. The first page, whose spaces are denoted by $E_1^{\bullet,\,\bullet}$, is defined as the {\it Dolbeault complex} (with spaces $E_1^{p,\,\bullet}$ of bidegrees $(p,\,\bullet)$ for every fixed $p$), while for every integer $r\geq 2$ the $r$-th page $E_r^{\bullet,\,\bullet}$ is defined as the cohomology of the previous page.

In [PSU20b, $\S.2.2$], we introduced the following terminology. Fix $r\in\N$ and a bidegree $(p,\,q)$ with $p,q\in\{0,\dots, n\}$. A smooth $\C$-valued $(p,\,q)$-form $\alpha$ on $X$ is said to be {\bf $E_r$-closed} if it represents an $E_r$-cohomology class, denoted by $\{\alpha\}_{E_r}\in E_r^{p,\,q}(X)$, on the $r$-th page of the Fr\"olicher spectral sequence of $X$. Meanwhile, $\alpha$ is said to be {\bf $E_r$-exact} if it represents the {\it zero} $E_r$-cohomology class, i.e. if $\{\alpha\}_{E_r}=0\in E_r^{p,\,q}(X)$. The $\C$-vector space of $C^\infty$ $E_r$-closed (resp. $E_r$-exact) $(p,\,q)$-forms will be denoted by ${\cal Z}_r^{p,\,q}(X)$ (resp. ${\mathscr C}_r^{p,\,q}(X)$). Of course, ${\mathscr C}_r^{p,\,q}(X)\subset{\cal Z}_r^{p,\,q}(X)$ and $E_r^{p,\,q}(X) = {\cal Z}_r^{p,\,q}(X)/{\mathscr C}_r^{p,\,q}(X)$.

These notions are explicitly characterised as follows. (See [PSU20b, Proposition $2.3.$].)

\begin{Prop}\label{Prop:E_r-closed-exact_conditions} (i)\, Fix $r\geq 2$. A form $\alpha\in C^\infty_{p,\,q}(X)$ is {\bf $E_r$-closed} if and only if there exist forms $u_l\in C^\infty_{p+l,\,q-l}(X)$ with $l\in\{1,\dots , r-1\}$ satisfying the following tower of $r$ equations: \begin{eqnarray*}\label{eqn:tower_E_r-closedness} \bar\partial\alpha & = & 0 \\
     \partial\alpha & = & \bar\partial u_1 \\
     \partial u_1 & = & \bar\partial u_2 \\
     \vdots & & \\
     \partial u_{r-2} & = & \bar\partial u_{r-1}.\end{eqnarray*}

We say in this case that $\bar\partial\alpha=0$ and $\partial\alpha$ {\bf runs at least $(r-1)$ times}.

\vspace{1ex}

  (ii)\, Fix $r\geq 2$. A form $\alpha\in C^\infty_{p,\,q}(X)$ is {\bf $E_r$-exact} if and only if there exist forms $\zeta\in C^\infty_{p-1,\,q}(X)$ and $\xi\in C^\infty_{p,\,q-1}(X)$ such that $$\alpha=\partial\zeta + \bar\partial\xi,$$

\noindent with $\xi$ arbitrary and $\zeta$ satisfying the following tower of $(r-1)$ equations: \begin{eqnarray}\label{eqn:tower_E_r-exactness_l}\nonumber \bar\partial\zeta & = & \partial v_{r-3}  \\
    \nonumber \bar\partial v_{r-3} & = & \partial v_{r-4} \\
    \nonumber \vdots & & \\
    \nonumber \bar\partial v_1 & = & \partial v_0 \\
    \nonumber           \bar\partial v_0 & = & 0,\end{eqnarray}

\noindent for some forms $v_0,\dots , v_{r-3}$. (When $r=2$, $\zeta_{r-2} = \zeta_0$ must be $\bar\partial$-closed.)

We say in this case that $\bar\partial\zeta$ {\bf reaches $0$ in at most $(r-1)$ steps}.

\vspace{1ex}

(iii)\, The following inclusions hold in every bidegree $(p,\,q)$:

$$\dots\subset{\mathscr C}_r^{p,\,q}(X)\subset{\mathscr C}_{r+1}^{p,\,q}(X)\subset\dots\subset{\cal Z}_{r+1}^{p,\,q}(X)\subset{\cal Z}_r^{p,\,q}(X)\subset\dots,$$

\noindent with $\{0\}={\mathscr C}_0^{p,\,q}(X)\subset{\mathscr C}_1^{p,\,q}(X)=(\mbox{Im}\,\bar\partial)^{p,\,q}$ and ${\cal Z}_1^{p,\,q}(X)=(\ker\bar\partial)^{p,\,q}\subset{\cal Z}_0^{p,\,q}(X)=C^\infty_{p,\,q}(X)$.

\end{Prop}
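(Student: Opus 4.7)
The approach is to unwind the construction of the Fr\"olicher spectral sequence from the Hodge filtration $F^p A^k = \bigoplus_{p'\geq p} A^{p',k-p'}$ of the de Rham complex. Under this filtration one has the standard description
\[
E_r^{p,q} = \frac{F^p A^{p+q}\cap d^{-1}(F^{p+r}A^{p+q+1})}{F^{p+1}A^{p+q} + d(F^{p-r+1}A^{p+q-1})\cap F^p A^{p+q}},
\]
so both $E_r$-closedness and $E_r$-exactness of a $(p,q)$-form $\alpha$ will translate into vanishing conditions on the bidegree components of $d=\partial+\bar\partial$ applied to suitable extensions of $\alpha$ living in the appropriate filtration piece.

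For (i), I would look for an $E_r$-representative of $\alpha$ of the form $\tilde u := \sum_{l=0}^{r-1}(-1)^l u_l$ with $u_0=\alpha$ and $u_l\in C^\infty_{p+l,q-l}(X)$ for $l\geq 1$. Since $\tilde u\in F^p$ automatically, $\alpha$ is $E_r$-closed precisely when such $u_l$'s can be chosen with $d\tilde u\in F^{p+r}$. Decomposing $d\tilde u$ by bidegree, the $(p,q+1)$-part equals $\bar\partial\alpha$, the $(p+r,q-r+1)$-part equals $(-1)^{r-1}\partial u_{r-1}$ (which already lies in $F^{p+r}$), and for $1\leq l\leq r-1$ the $(p+l,q+1-l)$-part is $(-1)^{l-1}(\partial u_{l-1}-\bar\partial u_l)$. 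Requiring these intermediate components to vanish is exactly the tower of equations in (i); the converse is immediate by running the same computation backwards.

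For (ii), $\alpha$ is $E_r$-exact iff its class vanishes in the above quotient, i.e. iff there exist forms $u_1,\dots,u_{r-1}$ as in (i) together with $w\in F^{p-r+1}A^{p+q-1}$ satisfying $dw\in F^p A^{p+q}$ and $\tilde u\equiv dw\pmod{F^{p+1}}$. Writing $w=\sum_{l=0}^{r-1}v_l$ with $v_l\in C^\infty_{p-r+1+l,q+r-2-l}(X)$, a bidegree computation shows that the $(p,q)$-component of $dw$ is $\partial v_{r-2}+\bar\partial v_{r-1}$, so setting $\zeta:=v_{r-2}$, $\xi:=v_{r-1}$ (and absorbing the $F^{p+1}$-discrepancy into $\xi$) yields the presentation $\alpha=\partial\zeta+\bar\partial\xi$. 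The requirement $dw\in F^p$ forces the $(p-k,q+k)$-components of $dw$ to vanish for $k=1,\dots,r-1$, which after the sign normalisation $v_l\mapsto(-1)^{r-1-l}v_l$ is precisely the tower on $\zeta$ and $v_0,\dots,v_{r-3}$ stated in (ii); for $r=2$ only $\bar\partial\zeta=0$ survives.

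For (iii), the nested inclusions follow directly from (i) and (ii): an $E_{r+1}$-closed form is $E_r$-closed by truncating its tower, while an $E_r$-exact form is $E_{r+1}$-exact by prolonging the $\zeta$-tower by one trivial step (taking the new bottom $v$ to be $0$). The extremal identifications $\mathcal Z_0 = C^\infty_{p,q}$, $\mathcal Z_1 = \ker\bar\partial$, $\mathscr C_0 = 0$ and $\mathscr C_1 = \mathrm{Im}\,\bar\partial$ are immediate from the definitions. The only real challenge throughout is the sign bookkeeping in the bidegree decompositions: one must renormalise each $u_l$ and $v_l$ by an appropriate $\pm 1$ to obtain the signs printed in the statement, which is mechanical but has to be executed consistently across (i) and (ii).
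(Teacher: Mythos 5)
Your proof is correct, but note that the paper itself contains no proof of this proposition to compare against: it is reproduced verbatim from [PSU20b, Proposition 2.3], with the reader referred there. Your argument — taking the standard filtered-complex description $E_r^{p,\,q}\cong\big(F^pA^{p+q}\cap d^{-1}(F^{p+r}A^{p+q+1})\big)\big/\big(F^{p+1}A^{p+q}+d(F^{p-r+1}A^{p+q-1})\cap F^pA^{p+q}\big)$ of the Fr\"olicher spectral sequence and decomposing $d\tilde u$ and $dw$ by bidegree, with the alternating-sign renormalisation of the $u_l$'s and $v_l$'s, and with truncation/prolongation of towers for (iii) — is exactly the standard derivation used in the cited source, and the one genuinely delicate point (that the class $\{\alpha\}_{E_r}$ is independent of the chosen extension $\tilde u$, because two extensions differ by an element of $F^{p+1}A^{p+q}\cap d^{-1}(F^{p+r}A^{p+q+1})$, which lies in the denominator) is implicitly handled correctly by your mod-$F^{p+1}$ bookkeeping in part (ii).
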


\vspace{2ex}

Moreover, we say that a form $\alpha\in C^\infty_{p,\,q}(X)$ is {\bf $\overline{E}_r$-closed} if $\bar\alpha$ is $E_r$-closed and we say that $\alpha$ is {\bf $\overline{E}_r$-exact} if $\bar\alpha$ is $E_r$-exact.

\vspace{2ex}

Taking our cue from Proposition \ref{Prop:E_r-closed-exact_conditions}, we defined in [PSU20b, Definition 3.1.] the following higher-page analogues of $\partial\bar\partial$-closedness and $\partial\bar\partial$-exactness.

\begin{Def}\label{Def:E_rE_r-bar} Suppose that $r\geq 2$.

\vspace{1ex}

(i)\, We say that a form $\alpha\in C^\infty_{p,\,q}(X)$ is {\bf $E_r\overline{E}_r$-closed} if there exist smooth forms $\eta_1,\dots , \eta_{r-1}$ and $\rho_1,\dots , \rho_{r-1}$ such that the following two towers of $r-1$ equations are satisfied: \begin{align*}\label{eqn:towers_E_rE_r-bar-closedness} \partial\alpha & = \bar\partial\eta_1 &    \bar\partial\alpha & = \partial\rho_1 & \\
    \partial\eta_1 & = \bar\partial\eta_2 &   \bar\partial\rho_1 & = \partial\rho_2 & \\
     \vdots & & \\
     \partial\eta_{r-2} & =  \bar\partial\eta_{r-1},  &  \bar\partial\rho_{r-2} & = \partial\rho_{r-1}. &\end{align*}

\vspace{1ex}

\vspace{1ex}

(ii)\, We refer to the properties of $\alpha$ in the two towers of $(r-1)$ equations under (i) by saying that $\partial\alpha$, resp. $\bar\partial\alpha$, {\bf runs at least $(r-1)$ times}.

(iii)\, We say that a form $\alpha\in C^\infty_{p,\,q}(X)$ is {\bf $E_r\overline{E}_r$-exact} if there exist smooth forms $\zeta, \xi, \eta$ such that \begin{equation}\label{eqn:main-eq_E_rE_r-bar-exactness}\alpha = \partial\zeta + \partial\bar\partial\xi + \bar\partial\eta\end{equation}

\noindent and such that $\zeta$ and $\eta$ further satisfy the following conditions. There exist smooth forms $v_{r-3},\dots , v_0$ and $u_{r-3},\dots , u_0$ such that the following two towers of $r-1$ equations are satisfied: \begin{align*}\label{eqn:towers_E_rE_r-bar-exactness} \bar\partial\zeta & = \partial v_{r-3} &    \partial\eta & = \bar\partial u_{r-3} & \\
     \bar\partial v_{r-3} & = \partial v_{r-4} &   \partial u_{r-3} & = \bar\partial u_{r-4} & \\
     \vdots & & \\
     \bar\partial v_0 & =  0,  &  \partial u_0 & = 0. &\end{align*}

\vspace{1ex}

(iv)\, We refer to the properties of $\zeta$, resp. $\eta$, in the two towers of $(r-1)$ equations under (iii) by saying that $\bar\partial\zeta$, resp. $\partial\eta$, {\bf reaches $0$ in at most $(r-1)$ steps}.

\end{Def}

 When $r-1=1$, the properties of $\bar\partial\zeta$, resp. $\partial\eta$, {\it reaching $0$ in $(r-1)$ steps} translate to $\bar\partial\zeta=0$, resp. $\partial\eta=0$.

To unify the definitions, we will also say that a form $\alpha\in C^\infty_{p,\,q}(X)$ is {\bf $E_1\overline{E}_1$-closed} (resp. {\bf $E_1\overline{E}_1$-exact}) if $\alpha$ is $\partial\bar\partial$-closed (resp. $\partial\bar\partial$-exact).

As with $E_r$ and $\overline{E}_r$, it follows at once from Definition \ref{Def:E_rE_r-bar} that the $E_r\overline{E}_r$-closedness condition becomes stronger and stronger as $r$ increases, while the $E_r\overline{E}_r$-exactness condition becomes weaker and weaker as $r$ increases. In other words, the following inclusions of vector spaces hold:

\vspace{2ex}

\noindent $\{\partial\bar\partial\mbox{-exact forms}\}\subset\dots\subset\{E_r\overline{E}_r\mbox{-exact forms}\}\subset\{E_{r+1}\overline{E}_{r+1}\mbox{-exact forms}\}\subset\dots $

\vspace{1ex}

\hfill $\dots\subset\{E_{r+1}\overline{E}_{r+1}\mbox{-closed forms}\}\subset\{E_r\overline{E}_r\mbox{-closed forms}\}\subset\dots\subset\{\partial\bar\partial\mbox{-closed forms}\}.$

\vspace{3ex}

The main notions introduced and studied in [PSU20b] are the following higher-page analogues of the Bott-Chern and Aeppli cohomologies.

\begin{Def}([PSU20b, Definition 3.4.])\label{Def:E_r-BC_E_r-A} Let $X$ be an $n$-dimensional compact complex manifold. Fix $r\in\N^\star$ and a bidegree $(p,\,q)$.

\vspace{1ex}

(i)\, The {\bf $E_r$-Bott-Chern} cohomology group of bidegree $(p,\,q)$ of $X$ is defined as the following quotient complex vector space: \[E_{r,\,BC}^{p,\,q}(X):=\frac{\{\alpha\in C^\infty_{p,\,q}(X)\,\mid\,d\alpha=0\}}{\{\alpha\in C^\infty_{p,\,q}(X)\,\mid\,\alpha\hspace{1ex}\mbox{is}\hspace{1ex} E_r\overline{E}_r\mbox{-exact}\}}.\]

\vspace{1ex}

(ii)\, The {\bf $E_r$-Aeppli} cohomology group of bidegree $(p,\,q)$ of $X$ is defined as the following quotient complex vector space: \[E_{r,\,A}^{p,\,q}(X):=\frac{\{\alpha\in C^\infty_{p,\,q}(X)\,\mid\,\alpha\hspace{1ex}\mbox{is}\hspace{1ex} E_r\overline{E}_r-\mbox{closed}\}}{\{\alpha\in C^\infty_{p,\,q}(X)\,\mid\,\alpha\in\mbox{Im}\,\partial + \mbox{Im}\,\bar\partial\}}.\]

\end{Def}

When $r=1$, the above groups coincide with the standard Bott-Chern, respectively Aeppli, cohomology groups. Moreover, for every $(p,\,q)$, one has a sequence of canonical linear {\bf surjections}: \begin{equation}\label{eqn:BC_seq-surjections}H_{BC}^{p,\, q}(X)=E_{1,\, BC}^{p,\, q}(X)\twoheadrightarrow E_{2,\, BC}^{p,\, q}(X)\twoheadrightarrow\dots\twoheadrightarrow E_{r,\, BC}^{p,\, q}(X)\twoheadrightarrow E_{r+1,\, BC}^{p,\, q}(X)\twoheadrightarrow\dots\end{equation} and a  sequence of {\bf subspaces} of $H_A^{p,\, q}(X)$: \begin{equation}\label{eqn:A_seq-inclusions}\dots\subset E_{r+1,A}^{p,\,q}(X)\subset E_{r,\, A}^{p,\,q}(X)\subset\dots\subset E_{1,A}^{p,\,q}(X)=H_A^{p,\, q}(X).\end{equation}

It can be shown that the representatives of $E_r$-Bott-Chern classes can be alternatively described as the forms that are simultaneously $E_r$-closed and $\overline{E}_r$-closed, while the $E_r$-Aeppli-exact forms can be alternatively described as those forms lying in ${\cal C}^{p,\,q}_r + \overline{\cal C}^{p,\,q}_r$.

\subsubsection{Higher-page analogues of the $\partial\bar\partial$-property}

In [PSU20a, Theorem and Definition 1.2.] and [PSU20b, Theorem 1.3.], we generalised the notion of $\partial\bar\partial$-manifold to the higher pages of the Fr\"olicher spectral sequence in the following form that also features the {\bf higher-page Bott-Chern cohomology} and the {\bf higher-page Aeppli cohomology} of $X$ introduced in [PSU20b].

\begin{The-Def}\label{The-Def:main-def-prop_introd} Let $X$ be a compact complex manifold with $\mbox{dim}_\C X=n$. Fix an arbitrary positive integer $r$. The following statements are equivalent.

\vspace{1ex}

$(1)$\, For every bidegree $(p,\,q)$, every class $\{\alpha^{p,\,q}\}_{E_r}\in E_r^{p,\,q}(X)$ can be represented by a {\bf $d$-closed $(p,\,q)$-form} and for every $k$, the linear map

$$\bigoplus_{p+q=k}E_r^{p,\,q}(X)\ni\sum\limits_{p+q=k}\{\alpha^{p,\,q}\}_{E_r}\mapsto\bigg\{\sum\limits_{p+q=k}\alpha^{p,\,q}\bigg\}_{DR}\in H^k_{DR}(X,\,\C)$$

\noindent is {\bf well-defined} by means of $d$-closed pure-type representatives and {\bf bijective}.

\vspace{1ex}

In this case, $X$ is said to have the {\bf $E_r$-Hodge Decomposition} property.

\vspace{2ex}

$(2)$\, The Fr\"olicher spectral sequence of $X$ {\bf degenerates at $E_r$} and the De Rham cohomology of $X$ is {\bf pure}.

\vspace{2ex}

$(3)$\, For all $p,q\in\{0,\dots , n\}$ and for every form $\alpha\in C^\infty_{p,\,q}(X)$ such that $d\alpha=0$, the following equivalences hold: \begin{equation*}\alpha\in\mbox{Im}\,d\iff\alpha\hspace{1ex}\mbox{is}\hspace{1ex}E_r\mbox{-exact}\iff\alpha\hspace{1ex}\mbox{is}\hspace{1ex}\overline{E}_r\mbox{-exact}\iff\alpha\hspace{1ex}\mbox{is}\hspace{1ex}E_r\overline{E}_r\mbox{-exact}.\end{equation*}

\vspace{2ex}

$(4)$\, For all $p,q\in\{0,\dots , n\}$, the canonical linear maps $$E_{r,\,BC}^{p,\,q}(X)\longrightarrow E_r^{p,\,q}(X) \hspace{3ex} \mbox{and} \hspace{3ex} E_r^{p,\,q}(X)\longrightarrow E_{r,\,A}^{p,\,q}(X)$$ are {\bf isomorphisms}, where $E_{r,\,BC}^{p,\,q}(X)$ and $E_{r,\,A}^{p,\,q}(X)$ are the {\bf $E_r$-Bott-Chern}, respectively the {\bf $E_r$-Aeppli}, cohomology groups of bidegree $(p,\,q)$ introduced in [PSU20b].

\vspace{2ex}

$(5)$\, For all $p,q\in\{0,\dots , n\}$, the canonical linear map $E_{r,\,BC}^{p,\,q}(X)\longrightarrow E_{r,\,A}^{p,\,q}(X)$ is {\bf injective}.

\vspace{2ex}

$(6)$\, One has $\mbox{dim} E_{r,\, BC}^k(X)=\mbox{dim} E_{r,\, A}^k(X)$ for all $k\in\{0,...,2n\}$.

\vspace{2ex}

A compact complex manifold X that satisfies any of the equivalent conditions $(1)$--$(5)$ is said to be a {\bf page-$(r-1)$-$\partial\bar\partial$-manifold}.

\end{The-Def}

\vspace{2ex}

The relations among these notions are captured in

\begin{Obs}([PSU20a, Corollary 2.11.)\label{Obs:increasing-r_page-r-ddbar} Let $X$ be a compact complex manifold.

  \vspace{1ex}

  (i)\, The following equivalence holds: \\

 \hspace{15ex} $X$ is a {\bf $\partial\bar\partial$-manifold} $\iff$ $X$ is a {\bf page-$0$-$\partial\bar\partial$-manifold};

  \vspace{1ex}

  (ii)\, For every $r\in\N^\star$, the following implication holds:

  \vspace{2ex}

\hspace{15ex} $X$ is a {\bf page-$r$-$\partial\bar\partial$-manifold} $\implies$ $X$ is a {\bf page-$(r+1)$-$\partial\bar\partial$-manifold}.

\end{Obs}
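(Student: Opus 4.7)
The plan is to deduce both parts directly from the equivalent characterisations listed in Theorem and Definition \ref{The-Def:main-def-prop_introd}, exploiting the flexibility to pick whichever formulation makes each implication transparent.

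For part (i), I would use characterisation $(3)$ with parameter $r=1$. Unpacking the conventions: a form is $E_1$-exact exactly when it is $\bar\partial$-exact, $\overline{E}_1$-exact exactly when it is $\partial$-exact, and, by the unifying convention stated after Definition \ref{Def:E_rE_r-bar}, $E_1\overline{E}_1$-exact exactly when it is $\partial\bar\partial$-exact. Hence condition $(3)$ specialised to $r=1$ reads: for every $d$-closed pure-type form $\alpha$,
\[\alpha\in\mbox{Im}\,d\iff\alpha\in\mbox{Im}\,\bar\partial\iff\alpha\in\mbox{Im}\,\partial\iff\alpha\in\mbox{Im}\,(\partial\bar\partial),\]
which is precisely the $\partial\bar\partial$-property recalled in $\S.2.3.1$. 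So being a page-$0$-$\partial\bar\partial$-manifold is literally the $\partial\bar\partial$-condition.

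For part (ii), the cleanest route is via characterisation $(2)$. A page-$r$-$\partial\bar\partial$-manifold is, by definition, one for which the Fr\"olicher spectral sequence degenerates at $E_{r+1}$ and the De Rham cohomology is pure. Degeneration at $E_{r+1}$ means that all differentials on the $(r+1)$-st page and beyond vanish; a fortiori, all differentials from the $(r+2)$-nd page onward vanish, so the FSS also degenerates at $E_{r+2}$. Purity of the De Rham cohomology is a condition on $X$ independent of $r$. Hence both clauses of $(2)$ persist when $r$ is increased by $1$, and $X$ is a page-$(r+1)$-$\partial\bar\partial$-manifold.

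There is no serious obstacle here: the entire content is bookkeeping between the several equivalent formulations. The only point that deserves a sentence of comment in the write-up is the verification that the three flavours of exactness collapse to the classical $\bar\partial$-, $\partial$- and $\partial\bar\partial$-exactness at $r=1$, since this relies on the extension of terminology adopted after Definition \ref{Def:E_rE_r-bar}.
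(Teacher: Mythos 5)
Your proposal is correct. Bear in mind, though, that this Observation carries no proof in the paper: it is quoted from [PSU20a, Corollary 2.11], so there is no internal argument to compare yours with; what follows assesses your reconstruction against the material the paper does reproduce.

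Both halves check out. For (i), the identification of condition $(3)$ of Theorem and Definition \ref{The-Def:main-def-prop_introd} at parameter $r=1$ with the classical $\partial\bar\partial$-property is exactly right: $E_1$-exactness equals $\bar\partial$-exactness because ${\mathscr C}_1^{p,\,q}(X)=(\mbox{Im}\,\bar\partial)^{p,\,q}$ (Proposition \ref{Prop:E_r-closed-exact_conditions}\,(iii)), $\overline{E}_1$-exactness becomes $\partial$-exactness by conjugation, and $E_1\overline{E}_1$-exactness equals $\partial\bar\partial$-exactness by the convention adopted after Definition \ref{Def:E_rE_r-bar}. For (ii), you handled the index shift correctly -- page-$(r-1)$-$\partial\bar\partial$ corresponds to parameter $r$ in Theorem and Definition \ref{The-Def:main-def-prop_introd} -- and once that is in place, characterisation $(2)$ reduces the implication to the trivial facts that degeneration at $E_{r+1}$ entails degeneration at $E_{r+2}$ and that purity of the De Rham cohomology does not involve $r$. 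One point worth retaining: your choice of characterisation $(2)$ is genuinely the right one, not merely a convenience. Had you argued via condition $(3)$, the implication would not be formal: $E_{r+1}$-exactness is \emph{weaker} than $E_r$-exactness, so the needed direction ``$d$-closed and $E_{r+1}$-exact $\implies$ $d$-exact'' cannot be extracted from the level-$r$ statement and the inclusions ${\mathscr C}_r^{p,\,q}\subset{\mathscr C}_{r+1}^{p,\,q}$ alone; it would require something like property (F)\,(ii) of Theorem \ref{The:page_r-1_ddbar-equivalence_bis} at the higher level, which is part of what is being proved.
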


\vspace{2ex}

That the {\it $E_r$-Hodge Decomposition property} implies the higher-page analogue of the Hodge Symmetry property was proved in [PSU20a, Corollary 2.10.].

\begin{Cor}\label{Cor:E_r_decomp-sym} Let $r$ be a positive integer. Any page-$(r-1)$-$\partial\bar\partial$-manifold $X$ has the {\bf $E_r$-Hodge Symmetry} property in the following sense.

  \vspace{1ex}

  For all $p,q\in\{0,\dots , n\}$,

  \vspace{1ex}

  (a)\, every class $\{\alpha^{p,\,q}\}_{E_r}\in E_r^{p,\,q}(X)$ contains a {\bf $d$-closed representative} $\alpha^{p,\,q}$;

  (b)\, the linear map $$E_r^{p,\,q}(X)\ni\{\alpha^{p,\,q}\}_{E_r}\mapsto\overline{\{\overline{\alpha^{p,\,q}}\}_{E_r}}\in \overline{E_r^{q,\,p}(X)}$$

  \noindent is {\bf well-defined} (in the sense that it does not depend on the choice of $d$-closed representative $\alpha^{p,\,q}$ of the class $\{\alpha^{p,\,q}\}_{E_r}$) and {\bf bijective}.

\end{Cor}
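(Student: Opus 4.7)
The plan is to deduce both parts directly from the list of equivalent characterisations of the page-$(r-1)$-$\partial\bar\partial$-property gathered in Theorem and Definition \ref{The-Def:main-def-prop_introd}. Part (a) is already built into condition (1) of that theorem: since $X$ is a page-$(r-1)$-$\partial\bar\partial$-manifold, every class in $E_r^{p,\,q}(X)$ is represented by some $d$-closed pure-type form, so there is nothing more to check.

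For part (b) I would handle well-definedness first. Let $\alpha^{p,\,q}$ and $\tilde\alpha^{p,\,q}$ be two $d$-closed representatives of the same class in $E_r^{p,\,q}(X)$, and set $\beta:=\alpha^{p,\,q}-\tilde\alpha^{p,\,q}$; then $\beta$ is simultaneously $d$-closed and $E_r$-exact. To see that $\overline{\alpha^{p,\,q}}$ and $\overline{\tilde\alpha^{p,\,q}}$ determine the same class in $E_r^{q,\,p}(X)$, one needs $\overline\beta$ to be $E_r$-exact, equivalently $\beta$ to be $\overline{E}_r$-exact. This is the one nontrivial step, and it is supplied by condition (3) of Theorem and Definition \ref{The-Def:main-def-prop_introd}, which precisely asserts that for $d$-closed forms on a page-$(r-1)$-$\partial\bar\partial$-manifold, $E_r$-exactness is equivalent to $\overline{E}_r$-exactness (and to $d$-exactness and $E_r\overline{E}_r$-exactness). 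So the map in (b) is well-defined, and it is clearly $\C$-linear once this is settled.

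For bijectivity I would construct the inverse symmetrically: given a class $\overline{\{\eta^{q,\,p}\}_{E_r}}\in\overline{E_r^{q,\,p}(X)}$, invoke (a) in bidegree $(q,\,p)$ to pick a $d$-closed representative $\eta^{q,\,p}$ and send it to $\{\overline{\eta^{q,\,p}}\}_{E_r}\in E_r^{p,\,q}(X)$; the same application of condition (3) shows this is well-defined, and the two maps are visibly mutually inverse because complex conjugation is an involution on forms. The only genuinely delicate ingredient is the interaction between $d$-closedness and $E_r$-exactness, and this is exactly what the page-$(r-1)$-$\partial\bar\partial$-hypothesis is designed to control, so I do not anticipate any obstacle beyond identifying which of the equivalent conditions in Theorem and Definition \ref{The-Def:main-def-prop_introd} to invoke at each stage.
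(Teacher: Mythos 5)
Your proof is correct. The paper does not actually reprove this statement (it cites [PSU20a, Corollary 2.10] for it), but your derivation — part (a) read off from condition (1) of Theorem and Definition \ref{The-Def:main-def-prop_introd}, well-definedness of the map in (b) from the equivalence of $E_r$-exactness and $\overline{E}_r$-exactness for $d$-closed pure-type forms in condition (3), and bijectivity via the symmetrically constructed inverse using (a) in bidegree $(q,\,p)$ — is precisely the intended argument, so there is nothing to add.
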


\section{Essential deformations of Calabi-Yau manifolds}\label{section:page-1_essential-deformations}

The notion of {\it essential deformations} was introduced in [Pop18] in the special case of the Iwasawa manifold $I^{(3)}$. We now extend it to the class of all {\it Calabi-Yau page-$1$-$\partial\bar\partial$-manifolds} $X$. The idea is to keep only those small deformations of $X$ that are parametrised by $E_2^{n-1,\,1}(X)$ instead of the larger-dimensional $E_1^{n-1,\,1}(X)$. However, $E_2^{n-1,\,1}(X)$ need not inject canonically into $E_1^{n-1,\,1}(X)$, so we will have to work with injections defined by a background Hermitian metric on $X$.

Let $X$ be a compact complex manifold with $\mbox{dim}_\C X=n$. Recall that, for every integer $r\geq 1$ and every bidegree $(p,\,q)$, the vector space of smooth $E_r$-closed (resp. $E_r$-exact) $(p,\,q)$-forms on $X$ is denoted by ${\cal Z}_r^{p,\,q}(X)$ (resp. ${\cal C}_r^{p,\,q}(X)$). We now define the following vector subspace of $E_1^{p,\,q}(X)$: \begin{equation*}\label{eqn:E_1_0_def} E_1^{p,\,q}(X)_0:=\frac{\{\alpha\in C^\infty_{p,\,q}(X)\,\mid\,\bar\partial\alpha=0 \hspace{1ex}\mbox{and}\hspace{1ex} \partial\alpha\in\mbox{Im}\,\bar\partial\}}{\{\bar\partial\beta\,\mid\,\beta\in C^\infty_{p,\,q-1}(X)\}} = \frac{{\cal Z}_2^{p,\,q}(X)}{{\cal C}_1^{p,\,q}(X)}\subset E_1^{p,\,q}(X).\end{equation*}

\noindent In other words, $E_1^{p,\,q}(X)_0=\ker d_1$ consists of the $E_1$-cohomology classes (i.e. Dolbeault cohomology classes) representable by $E_2$-closed forms of type $(p,\,q)$, where $d_1:E_1^{p,\,q}(X)\longrightarrow E_1^{p+1,\,q}(X)$, $d_1([u]_{\bar\partial})=[\partial u]_{\bar\partial}$, is the differential in bidegree $(p,\,q)$ on the first page of the FSS.

\begin{Lem}\label{Lem:P-pq-map} For all $p,q$, the canonical linear map $$P^{p,\,q}:E_1^{p,\,q}(X)_0\to E_2^{p,\,q}(X), \hspace{3ex} \{\alpha\}_{E_1}\mapsto\{\alpha\}_{E_2},$$

  \noindent is well defined and {\bf surjective}. Its kernel consists of the $E_1$-cohomology classes representable by $E_2$-exact forms of type $(p,\,q)$.

  In particular, $P^{p,\,q}$ is injective (hence an isomorphism) if and only if ${\cal C}_1^{p,\,q}(X)={\cal C}_2^{p,\,q}(X) $.

\end{Lem}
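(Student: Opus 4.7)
The proof is essentially a diagram chase using the inclusions recalled just before the statement, namely $\mathcal{C}_1^{p,\,q}(X)\subset\mathcal{C}_2^{p,\,q}(X)\subset\mathcal{Z}_2^{p,\,q}(X)\subset\mathcal{Z}_1^{p,\,q}(X)$, and the presentations
\[
E_1^{p,\,q}(X)_0=\frac{\mathcal{Z}_2^{p,\,q}(X)}{\mathcal{C}_1^{p,\,q}(X)},\qquad E_2^{p,\,q}(X)=\frac{\mathcal{Z}_2^{p,\,q}(X)}{\mathcal{C}_2^{p,\,q}(X)}.
\]
The point is that $P^{p,\,q}$ is nothing but the map induced on quotients by the identity on $\mathcal{Z}_2^{p,\,q}(X)$.

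For well-definedness, I would simply observe that if $\alpha,\alpha'\in\mathcal{Z}_2^{p,\,q}(X)$ represent the same class in $E_1^{p,\,q}(X)_0$, then $\alpha-\alpha'\in\mathcal{C}_1^{p,\,q}(X)\subset\mathcal{C}_2^{p,\,q}(X)$, so they represent the same class in $E_2^{p,\,q}(X)$. Surjectivity is immediate: every class in $E_2^{p,\,q}(X)$ has, by definition, a representative $\alpha\in\mathcal{Z}_2^{p,\,q}(X)$, and such an $\alpha$ automatically defines a class in $E_1^{p,\,q}(X)_0$ (since $\mathcal{Z}_2\subset\mathcal{Z}_1$ and the condition $\partial\alpha\in\mathrm{Im}\,\bar\partial$ is exactly what defines $\mathcal{Z}_2^{p,\,q}(X)$ at this page, by Proposition \ref{Prop:E_r-closed-exact_conditions}(i) with $r=2$); and the class of $\alpha$ maps to the given one by construction.

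For the kernel, I would argue that $\{\alpha\}_{E_1}\in\ker P^{p,\,q}$ iff $\alpha\in\mathcal{C}_2^{p,\,q}(X)$, i.e. iff $\alpha$ is $E_2$-exact; equivalently, $\ker P^{p,\,q}=\mathcal{C}_2^{p,\,q}(X)/\mathcal{C}_1^{p,\,q}(X)$. The final clause follows: $P^{p,\,q}$ is injective precisely when this quotient vanishes, which is the condition $\mathcal{C}_1^{p,\,q}(X)=\mathcal{C}_2^{p,\,q}(X)$, and combined with the already established surjectivity this gives an isomorphism.

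There is no real obstacle here; the only point worth being a bit careful about is the tacit identification of the defining condition ``$\bar\partial\alpha=0$ and $\partial\alpha\in\mathrm{Im}\,\bar\partial$'' with $E_2$-closedness in the bidegree at hand, which is exactly the $r=2$ case of Proposition \ref{Prop:E_r-closed-exact_conditions}(i) and justifies the rewriting of $E_1^{p,\,q}(X)_0$ as $\mathcal{Z}_2^{p,\,q}(X)/\mathcal{C}_1^{p,\,q}(X)$ used throughout.
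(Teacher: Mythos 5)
Your proposal is correct and follows the same route as the paper: the paper's proof likewise deduces well-definedness from the inclusion ${\mathscr C}_1^{p,\,q}(X)\subset{\mathscr C}_2^{p,\,q}(X)$ and dismisses the remaining assertions as obvious, which are exactly the quotient-map observations you spell out. Your version merely makes explicit the identifications $E_1^{p,\,q}(X)_0={\cal Z}_2^{p,\,q}(X)/{\mathscr C}_1^{p,\,q}(X)$ and $E_2^{p,\,q}(X)={\cal Z}_2^{p,\,q}(X)/{\mathscr C}_2^{p,\,q}(X)$ that the paper leaves implicit.
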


\noindent {\it Proof.} Well-definedness means that $P^{p,\,q}(\{\alpha\}_{E_1})$ is independent of the choice of representative of the class $\{\alpha\}_{E_1}\in E_1^{p,\,q}(X)_0$. This follows from the inclusion ${\cal C}_1^{p,\,q}(X)\subset{\cal C}_2^{p,\,q}(X)$. The other three statements are obvious.  \hfill $\Box$

\vspace{2ex}

Since the map $P^{p,\,q}:E_1^{p,\,q}(X)_0\to E_2^{p,\,q}(X)$ is surjective, there exist injective linear maps $J^{p,\,q}:E_2^{p,\,q}(X)\to E_1^{p,\,q}(X)_0$ such that $P^{p,\,q}\circ J^{p,\,q} = \mbox{Id}_{E_2^{p,\,q}(X)}.$ However, there is no unique or even canonical choice for such a map $J^{p,\,q}$. Indeed, there may exist different representatives $\alpha_1$ and $\alpha_2$ of a same class $\{\alpha\}_{E_2}\in E_2^{p,\,q}(X)$ that represent distinct classes $\{\alpha_1\}_{E_1}\neq\{\alpha_2\}_{E_1}$ in $E_1^{p,\,q}(X)$ even if $\alpha_1$ and $\alpha_2$ are $d$-closed. Choosing a section $J^{p,\,q}:E_2^{p,\,q}(X)\to E_1^{p,\,q}(X)_0$ for $P^{p,\,q}$ amounts to choosing a representative $\alpha$ in each class $\{\alpha\}_{E_2}\in E_2^{p,\,q}(X)$.

\vspace{2ex}

Since we are concerned with small deformations of $X$, we assume henceforth that $(p,\,q)=(n,\,n-1)$, although most of the following arguments apply in any bidegree. Let us fix a Hermitian metric $\omega$ on $X$. By the Hodge theory for the $E_2$-cohomology introduced in [Pop16] (and used e.g. in [PSU20b]) and the standard Hodge theory for the Dolbeault cohomology, there are Hodge isomorphisms: $$E_2^{n-1,\,1}(X)\simeq{\cal H}_2^{n-1,\,1} = {\cal H}_{2,\,\omega}^{n-1,\,1} \hspace{3ex} \mbox{and} \hspace{3ex} E_1^{n-1,\,1}(X)\simeq{\cal H}_1^{n-1,\,1} = {\cal H}_{1,\,\omega}^{n-1,\,1}$$

\noindent associating with every $E_2$- (resp. $E_1$-)class its unique $E_2$- (resp. $E_1$-)harmonic representative (w.r.t. $\omega$), where the $\omega$-dependent harmonic spaces are defined by $${\cal H}_2^{n-1,\,1}:=\ker(\widetilde\Delta:C^\infty_{n-1,\,1}(X)\to C^\infty_{n-1,\,1}(X))\subset{\cal H}_1^{n-1,\,1}:=\ker(\Delta'':C^\infty_{n-1,\,1}(X)\to C^\infty_{n-1,\,1}(X))$$

\noindent and $\widetilde\Delta=\widetilde\Delta_\omega=\partial p''\partial^\star + \partial^\star p''\partial + \Delta''$ is the pseudo-differential Laplacian introduced in [Pop16], $\Delta''= \bar\partial\bar\partial^\star + \bar\partial^\star\bar\partial$ is the standard $\bar\partial$-Laplacian, both associated with the metric $\omega$, while $p''$ is the $L^2_\omega$-orthogonal projection onto $\ker\Delta''$.

\vspace{2ex}

Under our {\it page-$1$-$\partial\bar\partial$-assumption} on $X$, every class $\{\alpha\}_{E_2}\in E_2^{n-1,\,1}(X)$ can be represented by a $d$-closed $(n-1,\,1)$-form. However, the $\widetilde\Delta$-harmonic (= the minimal $L^2$-norm) representative need not be $d$-closed. Nevertheless, for a reason that will become apparent later on, we need to work with $d$-closed representatives. In order to make the choice of such a representative unique (once a metric $\omega$ has been fixed on $X$), we will modify the $\widetilde\Delta$-harmonic representative to a $d$-closed one in a unique way imposed by $\omega$.

We will need the following result from an earlier work in our current special case $r=2$.

\begin{The}([PSU20b, Theorem 4.3.])\label{The:page_r-1_ddbar-equivalence_bis} Let $X$ be a compact complex manifold with $\mbox{dim}_\C X=n$. Fix an arbitrary integer $r\geq 2$. The following properties are equivalent.

	\vspace{1ex}

	(A)\, $X$ is a {\bf page-$(r-1)$-$\partial\bar\partial$-manifold}.

	\vspace{1ex}

	(F)\, For all $p,q\in\{0,\dots , n\}$, the following identities of vector subspaces of $C^\infty_{p+1,\,q}(X)$ hold:

	\begin{equation*}\label{eqn:identities_ddbar_r_bis}(i)\hspace{2ex} \mbox{Im}\,(\partial\bar\partial) = \partial({\cal Z}_r^{p,\,q})  \hspace{3ex} \mbox{and} \hspace{3ex} (ii)\hspace{2ex} {\cal C}_r^{p,\,q}\cap\ker d = \mbox{Im}\,d.\end{equation*}

\end{The}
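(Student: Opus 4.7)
The plan is to prove the equivalence $(A)\iff(F)$ by relating each side to characterization $(3)$ of Theorem-Definition \ref{The-Def:main-def-prop_introd}, namely the equivalence, for every $d$-closed pure-type form, of being $d$-exact, $E_r$-exact, $\overline{E}_r$-exact, and $E_r\overline{E}_r$-exact. The $E_r$-Hodge Decomposition property $(1)$ of the same theorem will be used for the forward direction.

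For the implication $(A)\Rightarrow(F)$: In $(i)$, the inclusion $\mbox{Im}(\partial\bar\partial)\subseteq\partial({\cal Z}_r^{p,\,q})$ is immediate, because $\bar\partial\xi$ is $E_r$-closed for every smooth $\xi$ (the tower of Proposition \ref{Prop:E_r-closed-exact_conditions}(i) is satisfied by $u_i=0$). For the reverse inclusion, given $\alpha\in{\cal Z}_r^{p,\,q}$, the $E_r$-Hodge Decomposition supplies a $d$-closed representative $\beta$ of $\{\alpha\}_{E_r}$; the difference $\alpha-\beta$ is $E_r$-exact, so $\alpha-\beta=\partial\zeta+\bar\partial\xi$ with $\zeta$ satisfying a zigzag, whence $\partial\alpha=\partial\beta+\partial\bar\partial\xi=\partial\bar\partial\xi$. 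In $(ii)$, the inclusion ${\cal C}_r^{p,\,q}\cap\ker d\subseteq\mbox{Im}\,d$ is the implication ``$E_r$-exact $\Rightarrow d$-exact'' of $(3)$, while the reverse inclusion uses the injectivity of the map $\bigoplus_{p+q=k}E_r^{p,\,q}(X)\to H^k_{DR}(X,\,\C)$ provided by $(1)$: a $d$-closed pure-type $d$-exact form represents the zero De Rham class, hence the zero $E_r$-class, and so is $E_r$-exact.

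For the implication $(F)\Rightarrow(A)$: I would verify $(3)$. Since $(F)$ holds in every bidegree, $(F)(ii)$ directly yields ``$d$-closed $+$ $E_r$-exact $\iff d$-exact'' and, applied in the conjugate bidegree to $\bar\alpha$, also ``$d$-closed $+$ $\overline{E}_r$-exact $\iff d$-exact''. The crux is the equivalence with $E_r\overline{E}_r$-exactness. In one direction, for $\alpha=\partial\zeta+\partial\bar\partial\xi+\bar\partial\eta$ satisfying the zigzag conditions of Definition \ref{Def:E_rE_r-bar}, the middle term equals $d(\bar\partial\xi)$; the relation $\bar\partial\zeta=\partial v_{r-3}$ makes $\bar\partial\zeta$ simultaneously $d$-closed and $\partial$-exact, so its conjugate is $d$-closed and $\bar\partial$-exact, hence $d$-exact by $(F)(ii)$ in the conjugate bidegree, giving $\partial\zeta=d\zeta-\bar\partial\zeta$ $d$-exact; by the symmetric argument $\bar\partial\eta$ is $d$-exact, so $\alpha$ is $d$-exact. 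In the reverse direction, one writes $\alpha=d\gamma$, decomposes $\gamma=\sum_i\gamma^{p-i,\,q+i-1}$ by bidegrees, and reads off from the purity of $\alpha$ both the relation $\alpha=\partial\gamma^{p-1,\,q}+\bar\partial\gamma^{p,\,q-1}$ and two natural zigzag chains running ``up-left'' and ``down-right'' among the remaining components. The principal technical obstacle is to truncate each of these chains to length $\leq r-1$ as required by Definition \ref{Def:E_rE_r-bar}; this is where $(F)(i)$ becomes essential (going beyond $(F)(ii)$ alone), since it permits replacing the tail of each chain by a $\partial\bar\partial$-exact correction which is then absorbed into a modified $\xi$. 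Carrying out this zigzag shortening is the main difficulty of the proof.
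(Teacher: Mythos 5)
A preliminary remark: the paper does not prove this statement at all; it is quoted as a black box from [PSU20b, Theorem 4.3]. So your proposal can only be judged against the statement itself. Within it, the direction (A)$\Rightarrow$(F) is correct (one harmless slip: $\bar\partial\xi$ is $E_r$-closed with $u_1=-\partial\xi$ and $u_i=0$ for $i\geq 2$, not with all $u_i=0$), and in (F)$\Rightarrow$(A) the reduction to condition $(3)$ of Theorem and Definition \ref{The-Def:main-def-prop_introd}, the equivalences ``$d$-exact $\iff$ $E_r$-exact $\iff$ $\overline{E}_r$-exact'' via (F)(ii) and conjugation, and the implication ``$E_r\overline{E}_r$-exact $\Rightarrow$ $d$-exact'' (using that $\bar\partial$-exact forms are $E_r$-exact, so (F)(ii) applies to the conjugate of $\bar\partial\zeta$ and of $\partial\eta$) are all handled correctly.

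The genuine gap is the implication you yourself flag and then abandon: ``$d$-exact $\Rightarrow$ $E_r\overline{E}_r$-exact''. Announcing that ``carrying out this zigzag shortening is the main difficulty of the proof'' does not discharge it, and essentially all of the content of (F)$\Rightarrow$(A) sits in exactly this step; as written, the proof is therefore incomplete at its central point. Worse, the one sentence you devote to the mechanism is off target: (F)(i) cannot be invoked for an arbitrary member of the chain, because chain members are in general not $\bar\partial$-closed, hence not visibly in any ${\cal Z}_r^{\bullet,\,\bullet}$; and the correction is not ``absorbed into a modified $\xi$''. The missing idea is the following. Write the chain coming from $\gamma$ as $\zeta_0=\gamma^{p-1,\,q}$, $\zeta_j=\gamma^{p-1-j,\,q+j}$, with $\bar\partial\zeta_j=-\partial\zeta_{j+1}$ for $j<L$ and $\bar\partial\zeta_L=0$. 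The key point is that if $L\geq r-1$, then the \emph{tail} $\zeta_L$ does lie in ${\cal Z}_r$: the tower required by Proposition \ref{Prop:E_r-closed-exact_conditions} (i) is supplied by the chain itself, namely $u_j:=(-1)^j\,\zeta_{L-j}$ for $j=1,\dots,r-1$ (this is precisely where $L\geq r-1$ is used). Now (F)(i) gives $\partial\zeta_L=\partial\bar\partial w$ for some $w$, whence $\bar\partial(\zeta_{L-1}-\partial w)=-\partial\zeta_L+\partial\bar\partial w=0$; so replacing $\zeta_{L-1}$ by $\zeta_{L-1}-\partial w$ and discarding $\zeta_L$ shortens the chain by one step without changing $\partial\zeta_0$. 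Downward induction stops exactly when the chain has at most $r-1$ members, which is the requirement of Definition \ref{Def:E_rE_r-bar} (iii) (with $\xi=0$, as it turns out); the $\eta$-chain is handled by the conjugate argument. With this lemma inserted, your outline closes up; without it, the theorem's hardest implication remains unproved.
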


\vspace{2ex}

The preliminary observation that produces a unique $d$-closed representative of any class $\{\alpha\}_{E_2}\in E_2^{n-1,\,1}(X)$ naturally associated with a given metric is the following

\begin{Lem}\label{Lem:minimal-d-closed_rep} Let $X$ be a compact complex {\bf page-$1$-$\partial\bar\partial$-manifold} with $\mbox{dim}_\C X=n$. Let $\omega$ be an arbitrary Hermitian metric on $X$.

  Fix an arbitrary class $\{\alpha\}_{E_2}\in E_2^{n-1,\,1}(X)$ and let $\alpha_\omega$ be the $\widetilde\Delta_\omega$-harmonic representative of it.

\hspace{1ex}

(i)\, There exist solutions $\xi\in C^\infty_{n-1,\,0}(X,\,\C)$ of the equation $$(\star)\hspace{3ex}\partial\bar\partial\xi = -\partial\alpha_\omega.$$

(ii)\, For each solution $\xi$ of equation $(\star)$, $\alpha_\omega + \bar\partial\xi$ is a $d$-closed representative of $\{\alpha\}_{E_2}\in E_2^{n-1,\,1}(X)$.

(iii)\, The minimal $L^2_\omega$-norm solution of equation $(\star)$ is given by the formula \begin{eqnarray}\label{eqn:minimal-d-closed_rep}\xi_\omega = -(\partial\bar\partial)^\star\Delta_{BC}^{-1}(\partial\alpha_\omega),\end{eqnarray} where $\Delta_{BC}^{-1}$ is the Green operator of the Bott-Chern Laplacian $\Delta_{BC}$ induced by $\omega$.

\end{Lem}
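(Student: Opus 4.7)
\medskip

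My proof plan divides along the three parts of the statement, with the real content concentrated in (i) and (iii).

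For part (i), I would begin by noting that since $\alpha_\omega$ is the $\widetilde\Delta_\omega$-harmonic representative of an $E_2$-cohomology class, it is in particular $E_2$-closed: $\alpha_\omega \in \mathcal{Z}_2^{n-1,\,1}(X)$. The goal is to show that $\partial\alpha_\omega \in \mathrm{Im}(\partial\bar\partial)$, after which solvability of $(\star)$ is immediate. This is precisely where the \emph{page-$1$-$\partial\bar\partial$-hypothesis} enters: by Theorem \ref{The:page_r-1_ddbar-equivalence_bis} applied with $r=2$ and $(p,\,q)=(n-1,\,1)$, we have the identity $\mathrm{Im}(\partial\bar\partial) = \partial(\mathcal{Z}_2^{n-1,\,1})$ as subspaces of $C^\infty_{n,\,1}(X)$. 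Since $\alpha_\omega \in \mathcal{Z}_2^{n-1,\,1}$, we conclude $\partial\alpha_\omega = \partial\bar\partial\eta$ for some $\eta \in C^\infty_{n-1,\,0}(X)$, and setting $\xi := -\eta$ solves $(\star)$. This use of Theorem \ref{The:page_r-1_ddbar-equivalence_bis} is the only nontrivial ingredient in the lemma and is the conceptual main step.

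Part (ii) is a direct calculation. For any solution $\xi$ of $(\star)$, the form $\alpha_\omega + \bar\partial\xi$ satisfies $\bar\partial(\alpha_\omega + \bar\partial\xi) = \bar\partial\alpha_\omega = 0$ (since $\alpha_\omega$ is $E_2$-closed, hence $\bar\partial$-closed) and $\partial(\alpha_\omega + \bar\partial\xi) = \partial\alpha_\omega + \partial\bar\partial\xi = \partial\alpha_\omega - \partial\alpha_\omega = 0$. So $\alpha_\omega + \bar\partial\xi$ is $d$-closed, and since $\bar\partial\xi$ is $\bar\partial$-exact (hence a fortiori $E_2$-exact), it represents the same $E_2$-class as $\alpha_\omega$, which is $\{\alpha\}_{E_2}$.

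For part (iii), this is a standard exercise in Hodge theory for the Bott-Chern Laplacian. Write $v := -\partial\alpha_\omega \in \mathrm{Im}(\partial\bar\partial)$. Since $\mathrm{Im}(\partial\bar\partial) \perp \ker\Delta_{BC}$ (as BC-harmonic forms lie in $\ker(\partial\bar\partial)^\star$), the Green operator $\Delta_{BC}^{-1}$ applied to $v$ produces a form $w$ with $\Delta_{BC} w = v$. Expanding $\Delta_{BC}$ as the standard sum of six nonnegative operators and applying it to $w$ expresses $v$ as a sum of terms lying respectively in $\mathrm{Im}(\partial\bar\partial)$, $\mathrm{Im}((\partial\bar\partial)^\star)$, $\mathrm{Im}(\partial^\star)$, and $\mathrm{Im}(\bar\partial^\star)$. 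By Schweitzer's three-fold orthogonal decomposition $C^\infty = \ker\Delta_{BC} \oplus \mathrm{Im}(\partial\bar\partial) \oplus (\mathrm{Im}\,\partial^\star + \mathrm{Im}\,\bar\partial^\star)$, since $v$ already lies in the $\mathrm{Im}(\partial\bar\partial)$ summand, only the first term $\partial\bar\partial(\partial\bar\partial)^\star w$ survives. Hence $v = \partial\bar\partial((\partial\bar\partial)^\star w)$, showing that $\xi_\omega := -(\partial\bar\partial)^\star\Delta_{BC}^{-1}(\partial\alpha_\omega) = (\partial\bar\partial)^\star w$ solves $(\star)$. Finally, $\xi_\omega \in \mathrm{Im}((\partial\bar\partial)^\star) \subset \ker(\partial\bar\partial)^\perp$, so any other solution $\xi$ differs from $\xi_\omega$ by an element of $\ker(\partial\bar\partial)$, giving $\|\xi\|^2 = \|\xi_\omega\|^2 + \|\xi - \xi_\omega\|^2 \geq \|\xi_\omega\|^2$; thus $\xi_\omega$ is the minimum-$L^2_\omega$-norm solution.

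The only genuine obstacle is the justification of (i) via Theorem \ref{The:page_r-1_ddbar-equivalence_bis}; (ii) is bookkeeping, and (iii) is an orthogonality argument that would be valid on any compact Hermitian manifold.
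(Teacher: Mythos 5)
Your proposal is correct and, for parts (i) and (ii), is exactly the paper's argument: the $\widetilde\Delta_\omega$-harmonic representative $\alpha_\omega$ is $E_2$-closed, so the identity $\mathrm{Im}\,(\partial\bar\partial)=\partial\big(\mathcal{Z}_2^{n-1,\,1}\big)$ from (i) of property (F) in Theorem \ref{The:page_r-1_ddbar-equivalence_bis} (applied with $r=2$) gives the solvability of $(\star)$, and (ii) is the same two-line verification (your explicit remark that $\bar\partial\xi$ is $E_2$-exact, hence the class is unchanged, is a point the paper leaves implicit). The only divergence is in (iii): the paper simply cites [Pop15, Theorem 4.1] for the fact that, for a $\partial\bar\partial$-exact form $v$, the minimal $L^2$-norm solution of $\partial\bar\partial u=v$ is $u=(\partial\bar\partial)^\star\Delta_{BC}^{-1}v$, whereas you reprove this fact from scratch. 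Your reproof is sound: every term of $\Delta_{BC}w$ other than $\partial\bar\partial(\partial\bar\partial)^\star w$ lies in $\mathrm{Im}\,\partial^\star+\mathrm{Im}\,\bar\partial^\star$ (note that $\mathrm{Im}\,(\partial\bar\partial)^\star\subset\mathrm{Im}\,\bar\partial^\star$), which is $L^2_\omega$-orthogonal to $\mathrm{Im}\,(\partial\bar\partial)\ni v$, so indeed $v=\partial\bar\partial\big((\partial\bar\partial)^\star w\big)$; and the Pythagorean argument via $\mathrm{Im}\,(\partial\bar\partial)^\star\perp\ker(\partial\bar\partial)$ gives minimality and uniqueness of the minimizer. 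So your version is self-contained where the paper's defers to a reference; both rest on the same Hodge theory for the Bott-Chern Laplacian, and the conceptual heart of the lemma -- the use of the page-$1$-$\partial\bar\partial$ hypothesis in (i) -- is identified and handled identically in both.
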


\noindent {\it Proof.} (i) The $(n-1,\,1)$-form $\alpha_\omega$ represents an $E_2$-class, so it is $E_2$-closed (i.e. $\alpha_\omega\in{\cal Z}_2^{n-1,\,1}$). Therefore, the page-$1$-$\partial\bar\partial$-assumption on $X$ implies, thanks to (i) of property (F) in Theorem \ref{The:page_r-1_ddbar-equivalence_bis}, that $\partial\alpha_\omega\in\mbox{Im}\,(\partial\bar\partial)$. This amounts to equation $(\star)$ being solvable for $\xi$.

(ii) Since $\alpha_\omega$ is $E_2$-closed, $\bar\partial\alpha_\omega = 0$, hence $\bar\partial(\alpha_\omega + \bar\partial\xi) = 0$. Meanwhile, $\partial(\alpha_\omega + \bar\partial\xi) = 0$ because $\xi$ is a solution equation $(\star)$.

(iii) It is known (see e.g. [Pop15, Theorem 4.1.]) that, for any $\partial\bar\partial$-exact form $v$, the minimal $L^2$-norm solution of the equation $\partial\bar\partial u = v$ is given by the formula $u=(\partial\bar\partial)^\star\Delta_{BC}^{-1}v$. In our case, $v=-\partial\alpha_\omega$. \hfill $\Box$

\vspace{2ex}

Note that the $\widetilde\Delta_\omega$-harmonic hypothesis on $\alpha_\omega$ was not used in the above proof. It was made only to make $\alpha_\omega$ uniquely associated with the pair $(\omega,\,\{\alpha\}_{E_2})$. The upshot of the above construction is the following

\begin{Conc}\label{Conc:minimal-d-closed_rep} Let $X$ be a compact complex {\bf page-$1$-$\partial\bar\partial$-manifold} with $\mbox{dim}_\C X = n$. For any Hermitian metric $\omega$ on $X$, the $\omega$-dependent linear injection \begin{equation*}\label{eqn:omega_lift-map}J^{n-1,\,1}_\omega:E_2^{n-1,\,1}(X)\hooklongrightarrow E_1^{n-1,\,1}(X)_0\subset E_1^{n-1,\,1}(X), \hspace{3ex} J^{n-1,\,1}_\omega(\{\alpha\}_{E_2}):=\{\alpha_\omega - \bar\partial(\partial\bar\partial)^\star\Delta_{BC}^{-1}(\partial\alpha_\omega)\}_{E_1},\end{equation*} has the property \begin{equation*}\label{eqn:omega_lift-map_prop}P^{n-1,\,1}\circ J^{n-1,\,1}_\omega = \mbox{Id}_{E_2^{n-1,\,1}(X)},\end{equation*} where $P^{n-1,\,1}:E_1^{n-1,\,1}(X)_0\twoheadrightarrow E_2^{n-1,\,1}(X)$ is the canonical linear surjection introduced in Lemma \ref{Lem:P-pq-map}.

  The map $J^{n-1,\,1}_\omega$ is called the {\bf $\omega$-lift} of $P^{n-1,\,1}$.

\end{Conc}

 In particular, if a {\bf canonical metric} $\omega_0$ exists on $X$ (in the sense that $\omega_0$ depends only on the complex structure of $X$ with no arbitrary choices involved in its definition), the associated map $J^{n-1,\,1}_{\omega_0}$ constitutes a {\bf canonical injection} of $E_2^{n-1,\,1}(X)$ into $E_1^{n-1,\,1}(X)$.

 \begin{Def}\label{Def:ess-deformations} Let $X$ be a compact complex $n$-dimensional {\bf Calabi-Yau page-$1$-$\partial\bar\partial$-manifold}.

\vspace{1ex}

(i) For any Hermitian metric $\omega$ on $X$, the space of {\bf small $\omega$-essential deformations} of $X$ is defined as the image in $E_1^{n-1,\,1}(X)$ of the $\omega$-lift $J^{n-1,\,1}_{\omega}$ of $P^{n-1,\,1}$, namely \[E_1^{n-1,\,1}(X)_{\omega\mbox{-ess}}:= J^{n-1,\,1}_{\omega}(E_2^{n-1,\,1}(X))\subset E_1^{n-1,\,1}(X).\]

\vspace{1ex}

(ii) Suppose that $X$ carries a {\bf canonical Hermitian metric} $\omega_0$. The space of {\bf small essential deformations} of $X$ is defined as the image in $E_1^{n-1,\,1}(X)$ of the canonical injection $J^{n-1,\,1}_{\omega_0}$, namely \[E_1^{n-1,\,1}(X)_{ess}:= J^{n-1,\,1}_{\omega_0}(E_2^{n-1,\,1}(X))\subset E_1^{n-1,\,1}(X).\]

\end{Def}

\vspace{2ex}

If the page-$1$-$\partial\bar\partial$-assumption on $X$ is replaced by a more general one (for example, the page-$r$-$\partial\bar\partial$-assumption for some $r\geq 2$ or merely the $E_r(X)=E_\infty(X)$ assumption for a specific $r\geq 2$), one can define a version of essential deformations using higher pages than the second one. The most natural choice is the degenerating page $E_r=E_\infty$ of the FSS if $r>2$. Since at the moment we are mainly interested in page-$1$-$\del\delbar$-manifolds, we confine ourselves to $E_2$.

\begin{Ex}\label{Ex:Iwasawa_dim3_ess}(The {\bf Iwasawa manifold}) If $\alpha$, $\beta$, $\gamma$ are the three canonical holomorphic $(1,\,0)$-forms induced on the complex $3$-dimensional Iwasawa manifold $X=G/\Gamma$ by $dz_1, dz_2, dz_3-z_1\,dz_2$ from $\C^3$ (the underlying complex manifold of the Heisenberg group $G$), it is well known that $\alpha$ and $\beta$ are $d$-closed, while $d\gamma = \partial\gamma = -\alpha\wedge\beta\neq 0$. It is equally standard that the Dolbeault cohomology group of bidegree $(2,\,1)$ is generated as follows: $$E_1^{2,\,1}(X)=\bigg\langle [\alpha\wedge\gamma\wedge\overline{\alpha}]_{\bar\partial},\, [\alpha\wedge\gamma\wedge\overline{\beta}]_{\bar\partial},\, [\beta\wedge\gamma\wedge\overline{\alpha}]_{\bar\partial},\, [\beta\wedge\gamma\wedge\overline{\beta}]_{\bar\partial}\bigg\rangle \oplus  \bigg\langle [\alpha\wedge\beta\wedge\overline{\alpha}]_{\bar\partial},\, [\alpha\wedge\beta\wedge\overline{\beta}]_{\bar\partial}\bigg\rangle.$$

\noindent In particular, we see that every $E_1$-class of bidegree $(2,\,1)$ can be represented by a $d$-closed form. Since every pure-type $d$-closed form is also $E_2$-closed (and, indeed, $E_r$-closed for every $r$), we get $$E_1^{2,\,1}(X) = E_1^{2,\,1}(X)_0.$$

It is equally standard that the $E_2$-cohomology group of bidegree $(2,\,1)$ is generated as follows: $$E_2^{2,\,1}(X)=\bigg\langle [\alpha\wedge\gamma\wedge\overline{\alpha}]_{E_2},\, [\alpha\wedge\gamma\wedge\overline{\beta}]_{E_2},\, [\beta\wedge\gamma\wedge\overline{\alpha}]_{E_2},\, [\beta\wedge\gamma\wedge\overline{\beta}]_{E_2}\bigg\rangle.$$

\noindent It identifies canonically with the vector subspace $$H^{2,\,1}_{[\gamma]}(X)=\bigg\langle [\alpha\wedge\gamma\wedge\overline{\alpha}]_{\bar\partial},\, [\alpha\wedge\gamma\wedge\overline{\beta}]_{\bar\partial},\, [\beta\wedge\gamma\wedge\overline{\alpha}]_{\bar\partial},\, [\beta\wedge\gamma\wedge\overline{\beta}]_{\bar\partial}\bigg\rangle \simeq E_1^{2,\,1}(X)\bigg /\bigg\langle [\alpha\wedge\beta\wedge\bar\alpha]_{\bar\partial},
\, [\alpha\wedge\beta\wedge\bar\beta]_{\bar\partial}\bigg\rangle $$

\noindent of $E_1^{2,\,1}(X)$ introduced in [Pop18, $\S.4.2$] as parametrising the {\bf essential deformations} defined there for the Iwasawa manifold $X$.

On the other hand, let $$\omega_0:=i\alpha\wedge\bar\alpha + i\beta\wedge\bar\beta + i\gamma\wedge\bar\gamma$$

\noindent be the Hermitian (even balanced) metric on $X$ {\bf canonically} induced by the complex parallelisable structure of $X$. It can be easily seen that the vector space of {\bf small essential deformations} coincides with the space $H^{2,\,1}_{[\gamma]}(X)$ of [Pop18]: $$E_1^{2,\,1}(X)_{ess}= J^{2,\,1}_{\omega_0}(E_2^{2,\,1}(X)) = H^{2,\,1}_{[\gamma]}(X)\subset E_1^{2,\,1}(X).$$

\end{Ex}

\begin{Ex}\label{Ex:I_5_ess}(The {\bf manifold $I^{(5)}$}) Let $X=I^{(5)}$ be the complex parallelisable nilmanifold of complex dimension $5$ described in Example \ref{Ex:Kuranishi_I5} (i.e. the $5$-dimensional analogue of the Iwasawa manifold.) It is a page-$1$-$\partial\bar\partial$-manifold by [PSU20a, Thm. 4.7]. We will use the standard notation $\varphi_{i_1\dots i_p\bar{j}_1\dots\bar{j}_q}:= \varphi_{i_1}\wedge\dots\wedge\varphi_{i_p}\wedge\overline\varphi_{j_1}\wedge\dots\wedge\overline\varphi_{j_q}$.

\vspace{1ex}

 For every $l\in\{3,4,5\}$, the linear map $$T_l:H^{0,\,1}(X,\,T^{1,\,0}X)\longrightarrow H^{0,\,1}(X), \hspace{3ex} [\theta]\mapsto[\theta\lrcorner\varphi_l],$$

\noindent is well defined. If we set $$H^{0,\,1}_{ess}(X,\,T^{1,\,0}X):=\ker T_3\cap\ker T_4\cap\ker T_5\subset H^{0,\,1}(X,\,T^{1,\,0}X),$$

\noindent and define $H^{4,\,1}_{ess}(X)\subset H^{4,\,1}(X)$ to be the image of $H^{0,\,1}_{ess}(X,\,T^{1,\,0}X)$ under the Calabi-Yau isomorphism $H^{0,\,1}(X,\,T^{1,\,0}X)\longrightarrow H^{4,\,1}(X)$ w.r.t. $u=\varphi_1\wedge\ldots\wedge\varphi_5$, we get the following description:

$$H^{4,\,1}_{ess}(X) = \bigg\langle[\varphi_{2345\bar{1}}]_{\bar\partial},\,[\varphi_{1345\bar{1}}]_{\bar\partial},\,[\varphi_{2345\bar{2}}]_{\bar\partial},\,[\varphi_{1345\bar{2}}]_{\bar\partial}\bigg\rangle.$$

 Moreover, we have the following identities of $\C$-vector spaces: $$H^{4,\,1}_{ess}(X) = E_1^{4,\,1}(X)_{ess}:=J_{\omega_0}^{4,\,1}(E_2^{4,\,1}(X))\subset E_1^{4,\,1}(X),$$

 \noindent where $$\omega_0:=\sum\limits_{j=1}^5 i\varphi_j\wedge\overline\varphi_j,$$

 \noindent is the canonical metric of $I^{(5)}$.

\end{Ex}







\section{Deformation unobstructedness for page-$1$-$\partial\bar\partial$-manifolds}\label{section:page-1-unobstructedness}

In this section, we prove Theorem \ref{The:Introd_page-1-ddbar_unobstructedness}.

\begin{Def}\label{Def:ess_unobstructedness} Let $X$ be a {\bf Calabi-Yau page-$1$-$\partial\bar\partial$-manifold} with $\mbox{dim}_\C X=n$. Fix a non-vanishing holomorphic $(n,\,0)$-form $u$ on $X$.

  We say that the {\bf essential Kuranishi family} of $X$ is {\bf unobstructed} if every $E_2$-class in $E^{n-1,\,1}_2(X)$ admits a representative $\psi_1(t)\lrcorner u$ such that the integrability condition (\ref{eqn:integrability-cond}) is satisfied (i.e. all the equations (Eq. $(\nu)$) of $\S.$\ref{subsection:background-def} are solvable) when starting off with $\psi_1(t)\in C^\infty_{0,\,1}(X,\,T^{1,\,0}X)$.

\end{Def}

Before proving Theorem \ref{The:Introd_page-1-ddbar_unobstructedness}, we make a few comments. First, we notice an equivalent formulation for the assumption made in (ii) of Theorem \ref{The:Introd_page-1-ddbar_unobstructedness}. Obviously, the inclusion ${\cal Z}_2^{n-1,\,1}\subset{\cal Z}_1^{n-1,\,1}$ always holds.

\begin{Lem}\label{Lem:Z_equality_rewording} Let $X$ be a compact complex {\bf page-$1$-$\partial\bar\partial$-manifold} with $\mbox{dim}_\C X=n$. Then, ${\cal Z}_1^{n-1,\,1} = {\cal Z}_2^{n-1,\,1}$ if and only if every Dolbeault cohomology class of bidegree $(n-1,\,1)$ can be represented by a $d$-closed form.

\end{Lem}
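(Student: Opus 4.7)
The plan is to prove both implications separately, each via a direct construction that uses the page-$1$-$\partial\bar\partial$-hypothesis exactly once and in the form of the identity $\partial({\cal Z}_2^{p,\,q})=\mbox{Im}\,\partial\bar\partial$ supplied by Theorem \ref{The:page_r-1_ddbar-equivalence_bis}.

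For the $(\Leftarrow)$ direction, only the inclusion ${\cal Z}_1^{n-1,\,1}\subset{\cal Z}_2^{n-1,\,1}$ needs to be checked, the reverse being automatic from Proposition \ref{Prop:E_r-closed-exact_conditions}(iii). Starting from $\alpha\in{\cal Z}_1^{n-1,\,1}$, I would use the hypothesis to pick a $d$-closed representative $\tilde\alpha$ of $[\alpha]_{\bar\partial}$ and write $\alpha=\tilde\alpha+\bar\partial\beta$. Applying $\partial$ to this equality and using $\partial\tilde\alpha=0$ together with $\partial\bar\partial=-\bar\partial\partial$ yields $\partial\alpha=\bar\partial(-\partial\beta)\in\mbox{Im}\,\bar\partial$, which is precisely the additional condition needed for $\alpha\in{\cal Z}_2^{n-1,\,1}$.

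For the $(\Rightarrow)$ direction, I would take any $\bar\partial$-closed representative $\alpha$ of a given Dolbeault class. The assumed equality places $\alpha$ into ${\cal Z}_2^{n-1,\,1}$, at which point the identity $\partial({\cal Z}_2^{n-1,\,1})=\mbox{Im}\,\partial\bar\partial$ from Theorem \ref{The:page_r-1_ddbar-equivalence_bis}(F)(i) (applied with $r=2$, valid because $X$ is page-$1$-$\partial\bar\partial$) produces a smooth $(n-1,\,0)$-form $\xi$ with $\partial\alpha=\partial\bar\partial\xi$. Then $\tilde\alpha:=\alpha-\bar\partial\xi$ is visibly $\bar\partial$-cohomologous to $\alpha$ and both $\bar\partial$- and $\partial$-closed, hence $d$-closed, as required.

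The entire content of the lemma is encapsulated in the identity $\partial({\cal Z}_2^{p,\,q})=\mbox{Im}\,\partial\bar\partial$: it is precisely this identity that allows one to cancel the non-$\partial$-closedness of a $\bar\partial$-closed $E_2$-closed form by subtracting a suitable $\bar\partial$-exact piece. In this sense the argument is an abstract repackaging of the construction already carried out in Lemma \ref{Lem:minimal-d-closed_rep}, now transported from the level of $E_2$-classes to the level of Dolbeault classes. No real obstacle is expected; the only thing worth double-checking is the applicability of Theorem \ref{The:page_r-1_ddbar-equivalence_bis} in the single bidegree $(n-1,\,1)$ we need, which is immediate since the theorem is stated for all $(p,\,q)$.
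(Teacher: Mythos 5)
Your proof is correct and takes essentially the same route as the paper's: both rest on the observation that $[\alpha]_{\bar\partial}$ admits a $d$-closed representative if and only if $\partial\alpha\in\mbox{Im}\,(\partial\bar\partial)$ (equivalently, $\partial\alpha\in\mbox{Im}\,\bar\partial$, i.e. $\alpha\in{\cal Z}_2^{n-1,\,1}$), with the nontrivial direction supplied by the identity $\partial({\cal Z}_2^{n-1,\,1})=\mbox{Im}\,(\partial\bar\partial)$ from property (F)(i) of Theorem \ref{The:page_r-1_ddbar-equivalence_bis} applied with $r=2$. The only difference is organisational: the paper runs a single chain of equivalences for a fixed $\bar\partial$-closed $\alpha$, whereas you split the statement into two implications and carry out the explicit corrections $\alpha=\tilde\alpha+\bar\partial\beta$ and $\tilde\alpha=\alpha-\bar\partial\xi$.
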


\noindent {\it Proof.} Let $\alpha\in C^\infty_{n-1,\,1}(X)$ be an arbitrary $\bar\partial$-closed form, i.e. $\alpha\in{\cal Z}_1^{n-1,\,1}$. The class $\{\alpha\}_{\bar\partial}$ can be represented by a $d$-closed form if and only if there exists $\beta$ of bidegree $(n-1,\,0)$ such that $\partial(\alpha+\bar\partial\beta)=0$. This is equivalent to $\partial\alpha$ being $\partial\bar\partial$-exact, which implies that $\partial\alpha$ is $\bar\partial$-exact.

Conversely, since $X$ is a page-$1$-$\partial\bar\partial$-manifold, the $\bar\partial$-exactness of $\partial\alpha$ implies its $\partial\bar\partial$-exactness. Indeed, $\bar\partial\alpha=0$ and if $\partial\alpha$ is $\bar\partial$-exact, then $\alpha\in{\cal Z}_2^{n-1,\,1}$, so $\partial\alpha\in\partial({\cal Z}_2^{n-1,\,1})$. Now, $\partial({\cal Z}_2^{n-1,\,1}) = \mbox{Im}\,(\partial\bar\partial)$ thanks to property (i) in characterisation (F) of the page-$1$-$\partial\bar\partial$-property given in [PSU20b, Thm. 4.3] (with $r=2$). Therefore, $\partial\alpha\in\mbox{Im}\,(\partial\bar\partial)$ whenever $\alpha\in{\cal Z}_2^{n-1,\,1}$.

Summing up, the class $\{\alpha\}_{\bar\partial}$ can be represented by a $d$-closed form if and only if $\partial\alpha$ is $\bar\partial$-exact if and only if $\alpha\in{\cal Z}_2^{n-1,\,1}$.  \hfill $\Box$

\vspace{2ex}

Second, we notice that both the Iwasawa manifold $I^{(3)}$ and the $5$-dimensional Iwasawa manifold $I^{(5)}$ satisfy all the hypotheses of Theorem \ref{The:Introd_page-1-ddbar_unobstructedness}. Indeed, $I^{(3)}$ and $I^{(5)}$ are complex parallelisable nilmanifolds, so they are page-$1$-$\partial\bar\partial$-manifolds by Theorem [PSU20a, Thm. 4.7]. In particular they are also Calabi-Yau manifolds (actually, all nilmanifolds are). Moreover, we have

\begin{Lem}\label{Lem:Iwasawas_def-hypotheses} Let $X$ be either $I^{(3)}$ or $I^{(5)}$ and let $n=\mbox{dim}_\C X\in\{3,\,5\}$. Let $u:=\varphi_1\wedge\varphi_2\wedge\varphi_3 = \alpha\wedge\beta\wedge\gamma\in C^\infty_{3,\,0}(I^{(3)})$ or  $u:=\varphi_1\wedge\dots\wedge\varphi_5\in C^\infty_{5,\,0}(I^{(5)})$ according to whether $X=I^{(3)}$ or $X=I^{(5)}$, a non-vanishing holomorphic $(n,\,0)$-form on $X$.

  Then, for all $\psi_1(t), \rho_1(s)\in C^\infty_{0,\,1}(X,\,T^{1,\,0}X)$ such that $\psi_1(t)\lrcorner u, \rho_1(s)\lrcorner u\in\ker d\cup\mbox{Im}\,\partial$, we have

\begin{equation*}\label{eqn:extra-hyp_unobstructedness_example}\psi_1(t)\lrcorner(\rho_1(s)\lrcorner u)\in{\cal Z}_2^{n-2,\,2}.\end{equation*}

\end{Lem}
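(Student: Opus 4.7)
The plan is a direct computation exploiting the left-invariant structure of both nilmanifolds. My first step is to reduce to left-invariant representatives: by Sakane's theorem, on a complex parallelisable nilmanifold the inclusion of the invariant double complex into all forms is an $E_1$-isomorphism, so (certainly for the application to Theorem \ref{The:Introd_page-1-ddbar_unobstructedness}) it suffices to verify the claim when $\psi_1$ and $\rho_1$ are left-invariant, and by linearity on basis elements $\psi_1=\bar\eta\otimes Z$, $\rho_1=\bar\omega\otimes W$. The Leibniz rule for the interior product, combined with $Z\lrcorner\bar\omega=0$ (a $(1,0)$-vector annihilates a $(0,1)$-form), then gives
\[
\psi_1\lrcorner(\rho_1\lrcorner u)=-\bar\eta\wedge\bar\omega\wedge(Z\lrcorner W\lrcorner u),
\]
where $\xi:=Z\lrcorner W\lrcorner u$ is a holomorphic $(n-2,0)$-form, since $u$ and the left-invariant $(1,0)$-vector fields are all holomorphic.

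The second step is to determine which $\bar\eta$ are allowed by the hypothesis $\psi_1\lrcorner u\in\ker d\cup\mbox{Im}\,\partial$. If $\psi_1\lrcorner u$ is $d$-closed then in particular $\bar\partial\psi_1=0$, and direct inspection of the structure equations of both $I^{(3)}$ and $I^{(5)}$ shows that the $\bar\partial$-closed invariant $(0,1)$-forms are spanned by $\bar\varphi_1,\bar\varphi_2$; hence $\bar\eta\wedge\bar\omega$ lies in the one-dimensional subspace $\langle\bar\varphi_1\wedge\bar\varphi_2\rangle$. The other alternative, $\psi_1\lrcorner u\in\mbox{Im}\,\partial$ without $d$-closedness, arises precisely for basis elements of the form $\bar\varphi_j\otimes\theta_k$ with $\theta_k\lrcorner u$ itself $\partial$-exact (for example $\bar\gamma\otimes\theta_3$ on $I^{(3)}$); a short enumeration shows that in every such case the corresponding $Z\lrcorner W\lrcorner u$ is a $d$-closed holomorphic $(n-2,0)$-form and the entire expression $\psi_1\lrcorner(\rho_1\lrcorner u)$ is already $d$-closed, hence trivially $E_2$-closed.

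The crux is therefore to show that $\alpha:=\bar\varphi_1\wedge\bar\varphi_2\wedge\xi$ is $E_2$-closed for any holomorphic $(n-2,0)$-form $\xi$. Since $\bar\partial\bar\varphi_1=\bar\partial\bar\varphi_2=0$ and $\bar\partial\xi=0$, we have $\bar\partial\alpha=0$. The key identity available in both manifolds is $\bar\partial\bar\varphi_3=\varepsilon\,\bar\varphi_1\wedge\bar\varphi_2$ with $\varepsilon\in\{\pm1\}$, obtained as the conjugate of the structure equation $d\varphi_3=\pm\varphi_1\wedge\varphi_2$ (which is of pure type $(2,0)$, so $d=\partial$ on $\varphi_3$); combined with $\bar\partial(\partial\xi)=-\partial\bar\partial\xi=0$ this yields
\[
\partial\alpha=\bar\varphi_1\wedge\bar\varphi_2\wedge\partial\xi=\varepsilon\,\bar\partial\bar\varphi_3\wedge\partial\xi=\varepsilon\,\bar\partial\bigl(\bar\varphi_3\wedge\partial\xi\bigr),
\]
which exhibits $\partial\alpha$ as $\bar\partial$-exact and so places $\alpha$ in ${\cal Z}_2^{n-2,2}$ via Proposition \ref{Prop:E_r-closed-exact_conditions}. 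The main obstacle I anticipate is not the central $\bar\partial$-exactness identity, which is clean, but the bookkeeping required for the second alternative in the hypothesis and for ensuring the enumeration of admissible pairs $(\psi_1,\rho_1)$ is complete in each of the two manifolds.
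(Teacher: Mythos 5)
Your overall strategy matches the paper's: reduction to left-invariant forms via [Sak76] and Remark \ref{Rem:sub-double-complex}, then the pure-tensor identity $\psi_1\lrcorner(\rho_1\lrcorner u)=-\bar\eta\wedge\bar\omega\wedge(Z\lrcorner W\lrcorner u)$. Your treatment of the case where \emph{both} $\psi_1\lrcorner u$ and $\rho_1\lrcorner u$ are $d$-closed is correct, and the single identity $\partial\alpha=\bar\varphi_1\wedge\bar\varphi_2\wedge\partial\xi=\varepsilon\,\bar\partial(\bar\varphi_3\wedge\partial\xi)$ is genuinely cleaner than the paper's cases (a), which compute the brackets $[\psi_1(t),\rho_1(s)]$ coefficient by coefficient and exhibit explicit $\partial\bar\partial$-potentials.

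The gap is your second alternative, and it is not mere bookkeeping. The claim that every admissible pairing involving an $\mbox{Im}\,\partial$-element yields a $d$-closed form (with $Z\lrcorner W\lrcorner u$ $d$-closed) is true on $I^{(3)}$, where $\theta_3$ is central, but false on $I^{(5)}$, where $[\theta_1,\theta_3]=-\theta_4$ and $[\theta_2,\theta_3]=-\theta_5$. Concretely, take $\psi_1=\bar\varphi_1\otimes\theta_1$, so $\psi_1\lrcorner u=\bar\varphi_1\wedge\varphi_{2345}\in\ker d$, and $\rho_1=\bar\varphi_3\otimes\theta_3$, so $\rho_1\lrcorner u=\bar\varphi_3\wedge\varphi_{1245}=-\partial(\bar\varphi_3\wedge\varphi_{345})\in\mbox{Im}\,\partial$. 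Then $Z\lrcorner W\lrcorner u=\theta_1\lrcorner\varphi_{1245}=\varphi_{245}$, and $\partial\varphi_{245}=\varphi_{1235}\neq 0$, so $\psi_1\lrcorner(\rho_1\lrcorner u)=-\bar\varphi_1\wedge\bar\varphi_3\wedge\varphi_{245}$ has $\partial\bigl(\psi_1\lrcorner(\rho_1\lrcorner u)\bigr)=-\bar\varphi_1\wedge\bar\varphi_3\wedge\varphi_{1235}\neq 0$: it is not $d$-closed. The conclusion does hold for this pair, but only through the step your proposal skips, which is exactly the content of the paper's case (b) for $I^{(5)}$: one checks $\bar\partial$-closedness (because $\bar\partial\bar\varphi_3=\bar\varphi_1\wedge\bar\varphi_2$ is killed by the factor $\bar\varphi_1$) and then uses $\bar\varphi_1\wedge\bar\varphi_3=\bar\partial\bar\varphi_4$ to rewrite $\partial\bigl(\psi_1\lrcorner(\rho_1\lrcorner u)\bigr)=-\bar\partial\bar\varphi_4\wedge\varphi_{1235}=-\bar\partial(\bar\varphi_4\wedge\varphi_{1235})\in\mbox{Im}\,\bar\partial$. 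There is a second, deeper problem that your (correct, and broader than the paper's) enumeration exposes: on $I^{(5)}$ the admissible $\mbox{Im}\,\partial$-elements also include $\rho_1=\bar\varphi_4\otimes\theta_3$, since $\bar\varphi_4\wedge\varphi_{1245}=-\partial(\bar\varphi_4\wedge\varphi_{345})$, and pairing it with $\psi_1=\bar\varphi_2\otimes\theta_1$ gives $-\bar\varphi_2\wedge\bar\varphi_4\wedge\varphi_{245}$, whose $\bar\partial$ equals $\bar\varphi_2\wedge\bar\varphi_1\wedge\bar\varphi_3\wedge\varphi_{245}\neq 0$; this form is not even $\bar\partial$-closed, so no argument can place it in ${\cal Z}_2^{3,\,2}$. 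The paper's case (b) avoids such pairs by restricting the $\mbox{Im}\,\partial$-elements to those with $(0,1)$-component $\bar\varphi_3$ (its parenthetical assertion that these span all $\partial$-exact invariant $(4,1)$-forms overlooks $\bar\varphi_\mu\wedge\partial\varphi_{345}$ with $\mu\neq 3$). So the ``short enumeration'' you defer is precisely where the real work --- and indeed the delicate point of the statement itself --- lies: a correct write-up must either carry out the paper's case-(b) computation or restrict the class of admissible $\rho_1$'s, and cannot rest on the asserted $d$-closedness.
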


\noindent {\it Proof.} It is given in section \ref{section:explicit-computations}.  \hfill $\Box$

\vspace{3ex}

Finally, let us mention that both manifolds $X=I^{(3)}$ and $X=I^{(5)}$ have the property that every Dolbeault cohomology class of type $(n-1,\,1)$ can be represented by a $d$-closed form. Indeed, as seen in the proof of Lemma \ref{Lem:Iwasawas_def-hypotheses} spelt out in $\S.$\ref{section:explicit-computations}, $H^{n-1,\,1}_{\bar\partial}(X)$ is generated by the classes represented by the $\widehat\varphi_i\wedge\overline\varphi_1$'s and the $\widehat\varphi_i\wedge\overline\varphi_2$'s with $i\in\{1,2,3\}$ (in the case of $X=I^{(3)}$) and $i\in\{1,\dots ,5\}$ (in the case of $X=I^{(5)}$), where $\widehat\varphi_i$ stands for $u=\varphi_1\wedge\ldots\wedge\varphi_5$ with $\varphi_i$ omitted. All the forms $\widehat\varphi_i\wedge\overline\varphi_\lambda$, with $\lambda\in\{1,2\}$, are $d$-closed.

Note that the hypotheses of Theorem \ref{The:Introd_page-1-ddbar_unobstructedness}, all of which are satisfied by $X=I^{(3)}$ and $X=I^{(5)}$, have the advantage of being cohomological in nature, hence fairly general and not restricted to the class of nilmanifolds. Indeed, there is no mention of any structure equations in Theorem \ref{The:Introd_page-1-ddbar_unobstructedness}.

\vspace{2ex}

Finally, for the reader's convenience, we recall the following classical result (cf. Lemma 3.1. in [Tia87], Lemma 1.2.4. in [Tod89]) that will be made a key use of in the proof of Theorem \ref{The:Introd_page-1-ddbar_unobstructedness}.

\begin{Lem}(Tian-Todorov Lemma)\label{Lem:Tian-Todorov} Let $X$ be a compact complex manifold ($n=\mbox{dim}_{\C}X$) such that $K_X$ is {\bf trivial}. Then, for any forms $\theta_1, \theta_2\in C^{\infty}_{0,\, 1}(X,\, T^{1,\, 0}X)$ such that $\partial(\theta_1\lrcorner u)=\partial(\theta_2\lrcorner u)=0$, we have

$$[\theta_1,\, \theta_2]\lrcorner u\in\mbox{Im}\,\partial.$$

\noindent More precisely, the identity \begin{equation}\label{eqn:basic-trick}[\theta_1,\, \theta_2]\lrcorner u = -\partial(\theta_1\lrcorner(\theta_2\lrcorner u))\end{equation} holds for $\theta_1, \theta_2\in C^{\infty}_{0,\, 1}(X,\, T^{1,\, 0}X)$ whenever $\partial(\theta_1\lrcorner u) = \partial(\theta_2\lrcorner u)=0$.

\end{Lem}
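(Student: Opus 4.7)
\textbf{Plan for the proof of Lemma \ref{Lem:Tian-Todorov}.} The strategy is to establish the stronger general identity, valid for arbitrary $\theta_1, \theta_2 \in C^\infty_{0,1}(X, T^{1,0}X)$:
\begin{equation*}
[\theta_1, \theta_2] \lrcorner u \;+\; \partial\bigl(\theta_1 \lrcorner (\theta_2 \lrcorner u)\bigr) \;=\; R(\theta_1, \theta_2),
\end{equation*}
where $R(\theta_1, \theta_2)$ is a bilinear expression that is linear in $\partial(\theta_1 \lrcorner u)$ and $\partial(\theta_2 \lrcorner u)$, and hence vanishes under the hypotheses of the lemma; from this, the precise formula (\ref{eqn:basic-trick}) follows at once. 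The only inputs needed are: (i) $u$ is $d$-closed, since $\bar\partial u = 0$ by holomorphicity and $\partial u \in C^\infty_{n+1, 0}(X) = 0$ for bidegree reasons; and (ii) $u$ trivialises $K_X$, so that $\theta \mapsto \theta \lrcorner u$ is a $C^\infty(X)$-linear isomorphism from $C^\infty_{0, \bullet}(X, \Lambda^k T^{1,0}X)$ onto $C^\infty_{n-k, \bullet}(X)$.

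\textbf{Derivation of the general identity.} By $C^\infty(X)$-bilinearity it suffices to verify the identity on decomposable inputs $\theta_i = \bar\alpha_i \otimes V_i$, with $V_i$ a smooth $(1,0)$-vector field and $\bar\alpha_i$ a smooth $(0,1)$-form. The pure vector-field baseline goes as follows. Cartan's magic formula $L_V = d \iota_V + \iota_V d$ applied to $u$ gives $L_V u = d(V \lrcorner u)$, and the standard naturality identity $[V_1, V_2] \lrcorner u = L_{V_1}(V_2 \lrcorner u) - V_2 \lrcorner L_{V_1} u$ holds. Extracting the $(n-1, 0)$-components of both sides (recall $[V_1, V_2]$ remains of type $(1,0)$ by integrability of the complex structure, and bidegree-split $d = \partial + \bar\partial$) one obtains
\begin{equation*}
[V_1, V_2] \lrcorner u \;=\; \partial(V_1 \lrcorner V_2 \lrcorner u) \;+\; V_1 \lrcorner \partial(V_2 \lrcorner u) \;-\; V_2 \lrcorner \partial(V_1 \lrcorner u).
\end{equation*}
The general case is obtained by wedging through with $\bar\alpha_1$ and $\bar\alpha_2$, using the Schouten/Kodaira-Spencer bracket formula for $T^{1,0}X$-valued $(0,1)$-forms together with the anti-derivation rule $V \lrcorner (\omega \wedge \eta) = (V \lrcorner \omega) \wedge \eta + (-1)^{|\omega|} \omega \wedge (V \lrcorner \eta)$, and noting that $V_i \lrcorner \bar\alpha_j = 0$ for type reasons.

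\textbf{Conclusion and main obstacle.} Once the general identity is established, the hypotheses $\partial(\theta_i \lrcorner u) = 0$ for $i = 1, 2$ force $R(\theta_1, \theta_2) = 0$, yielding (\ref{eqn:basic-trick}) and hence $[\theta_1, \theta_2] \lrcorner u \in \mbox{Im}\, \partial$. The main difficulty I expect is the sign and convention bookkeeping: there are several conventions in the literature for the Schouten/Kodaira-Spencer bracket on $C^\infty_{0, 1}(X, T^{1,0}X)$, for the interior-product anti-derivation rule, and for extending Cartan's formula to complex-valued (non-holomorphic) vector fields. The safest, if most tedious, route is to run the full verification in local holomorphic coordinates, taking $u = f \, dz_1 \wedge \cdots \wedge dz_n$ with $f$ holomorphic, expanding everything in the basis $\{d\bar z_j \otimes \partial_{z_i}\}$, and matching coefficients on both sides; by $\C$-linearity this local verification then globalises to the stated identity.
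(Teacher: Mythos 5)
The paper itself offers no proof of Lemma \ref{Lem:Tian-Todorov}: it is recalled as a classical result, with the argument delegated to Lemma 3.1 of [Tia87] and Lemma 1.2.4 of [Tod89]. Measured against those classical arguments, your strategy is the standard one: prove a universal identity
$$[\theta_1,\,\theta_2]\lrcorner u \;=\; -\partial\big(\theta_1\lrcorner(\theta_2\lrcorner u)\big) \;+\; R(\theta_1,\,\theta_2),$$
in which $R$ is assembled from $\theta_1\lrcorner\partial(\theta_2\lrcorner u)$ and $\theta_2\lrcorner\partial(\theta_1\lrcorner u)$, and then note that $R$ vanishes under the hypothesis $\partial(\theta_1\lrcorner u)=\partial(\theta_2\lrcorner u)=0$, which yields (\ref{eqn:basic-trick}). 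Your vector-field baseline is correct: Cartan's formula together with $\iota_{[V_1,V_2]}=L_{V_1}\iota_{V_2}-\iota_{V_2}L_{V_1}$, applied to the $d$-closed form $u$ and split by bidegree (integrability keeping $[V_1,V_2]$ of type $(1,0)$), gives exactly $[V_1,V_2]\lrcorner u=\partial(V_1\lrcorner V_2\lrcorner u)+V_1\lrcorner\partial(V_2\lrcorner u)-V_2\lrcorner\partial(V_1\lrcorner u)$.

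Two caveats. First, the reduction to decomposables $\theta_i=\bar\alpha_i\otimes V_i$ cannot be justified by ``$C^\infty(X)$-bilinearity'': the bracket $[\theta_1,\theta_2]$ and the error term $R$ are first-order differential operators in $\theta_1,\theta_2$, hence not tensorial. What actually saves the reduction is that every term is $\C$-bilinear (additive) in each argument and the identity is local, while locally every element of $C^\infty_{0,\,1}(X,\,T^{1,\,0}X)$ is a finite sum of decomposables; this is a repairable slip, but as stated it is wrong. Second, the phrase ``wedging through with $\bar\alpha_1$ and $\bar\alpha_2$'' conceals essentially all of the work: the Kodaira--Spencer bracket of decomposables contains extra terms (of the shape $\bar\alpha_1\wedge(V_1\lrcorner\partial\bar\alpha_2)\otimes V_2$ and its counterpart), and $\partial$ applied to $\theta_1\lrcorner(\theta_2\lrcorner u)=-\bar\alpha_1\wedge\bar\alpha_2\wedge(V_1\lrcorner V_2\lrcorner u)$ produces terms in $\partial\bar\alpha_1,\partial\bar\alpha_2$; the proof consists precisely in checking that these families of terms combine into $R$ (this is also where the sign flip between the plus sign in the vector-field baseline and the minus sign in (\ref{eqn:basic-trick}) arises). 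Your write-up defers this verification, offering instead the local-coordinate computation with $u=f\,dz_1\wedge\cdots\wedge dz_n$, $f$ holomorphic, as a fallback; that computation is exactly the proof in the cited references, so the plan is sound, but as written your proposal is an outline of the standard argument rather than a complete proof.
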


\vspace{3ex}

\begin{proof}[Proof of Theorem \ref{The:Introd_page-1-ddbar_unobstructedness}.] Let $\{\eta_1\}_{E_2}\in E_2^{n-1,\,1}(X)$ be an arbitrary nonzero class. Pick any $d$-closed representative $\eta_1\in C^\infty_{n-1,\,1}(X)$ of it. A $d$-closed representative exists thanks to the page-$1$-$\partial\bar\partial$-assumption on $X$. Under the extra assumption ${\cal Z}_1^{n-1,\,1} = {\cal Z}_2^{n-1,\,1}$ of (ii), there is even a $d$-closed representative $\eta_1$ in every Dolbeault class $\{\eta_1\}_{E_1}\in E_1^{n-1,\,1}(X)$, thanks to Lemma \ref{Lem:Z_equality_rewording}. So, we choose an arbitrary $d$-closed form $\eta_1\in C^\infty_{n-1,\,1}(X)$ that represents an arbitrary nonzero class in either $E_2^{n-1,\,1}(X)$ or $E_1^{n-1,\,1}(X)$ depending on whether we are in case (i) or in case (ii). By the Calabi-Yau isomorphism (\ref{eqn:C-Y_isom_forms}), there exists a unique $\psi_1\in C^\infty_{0,\,1}(X,\,T^{1,\,0}X)$ such that $$\psi_1\lrcorner u=\eta_1.$$

 We will prove the existence of forms $\psi_\nu\in C^\infty_{0,\,1}(X,\,T^{1,\,0}X)$, with $\nu\in\N^\star$ and $\psi_1$ being the already fixed such form, that satisfy the equations \begin{equation*}\label{eqn:integrability-condition_nu_bis}\bar\partial\psi_\nu = \frac{1}{2}\,\sum\limits_{\mu=1}^{\nu-1}[\psi_\mu,\,\psi_{\nu-\mu}] \hspace{3ex} (\mbox{Eq.}\,\,(\nu-1)), \hspace{3ex} \nu\geq 2, \end{equation*}

\noindent which, as recalled in $\S.$\ref{subsection:background-def}, are equivalent to the integrability condition $\bar\partial\psi(\tau) = (1/2)\,[\psi(\tau),\,\psi(\tau)]$ being satisfied by the form $\psi(\tau):=\psi_1\,\tau + \psi_2\,\tau^2+\dots + \psi_N\,\tau^N + \dots \in C^\infty_{0,\,1}(X,\,T^{1,\,0}X)$ for all $\tau\in\C$ with $|\tau|$ sufficiently small. The convergence in a H\"older norm of the series defining $\psi(\tau)$ for $|\tau|$ small enough is guaranteed by the general Kuranishi theory (cf. [Kur62]), while the resulting $\psi(\tau)$ defines a complex structure $\bar\partial_\tau$ on $X$ that identifies on functions with $\bar\partial - \psi(\tau)$ and represents the infinitesimal deformation of the original complex structure $\bar\partial$ of $X$ in the direction of $[\psi_1]\in H^{0,\,1}(X,\,T^{1,\,0}X)$.

Since $\partial(\psi_1\lrcorner u) = \partial\eta_1 =0$, the Tian-Todorov Lemma \ref{Lem:Tian-Todorov} guarantees that $[\psi_1,\,\psi_1]\lrcorner u\in\mbox{Im}\,\partial$ and $$[\psi_1,\,\psi_1]\lrcorner u = -\partial(\psi_1\lrcorner(\psi_1\lrcorner u)).$$

\noindent On the other hand, $\bar\partial\eta_1=0$, hence $\bar\partial\psi_1=0$, hence $\psi_1\lrcorner(\psi_1\lrcorner u)\in\ker\bar\partial$. We even have the stronger property $\psi_1\lrcorner(\psi_1\lrcorner u)\in{\cal Z}_2^{n-2,\,2}$ thanks to assumption (\ref{eqn:extra-hyp_unobstructedness}), since $\psi_1\lrcorner u\in\ker d$. Therefore, $$[\psi_1,\,\psi_1]\lrcorner u = -\partial(\psi_1\lrcorner(\psi_1\lrcorner u))\in\partial({\cal Z}_2^{n-2,\,2}) = \mbox{Im}\,(\partial\bar\partial),$$

\noindent the last identity being a consequence of the page-$1$-$\partial\bar\partial$-assumption on $X$. (See (i) of property (F) in Theorem \ref{The:page_r-1_ddbar-equivalence_bis}.)

Thus, there exists a form $\Phi_2\in C^\infty_{n-2,\,1}(X)$ such that $$\bar\partial\partial\Phi_2 = \frac{1}{2}\,[\psi_1,\,\psi_1]\lrcorner u.$$

\noindent If we fix an arbitrary Hermitian metric $\omega$ on $X$, we choose $\Phi_2$ as the unique solution of the above equation with the extra property $\Phi_2\in\mbox{Im}\,(\partial\bar\partial)^\star$. This is the minimal $L^2_\omega$-norm solution, as follows from the $3$-space orthogonal decomposition of $C^\infty_{n-2,\,1}(X)$ induced by the Aeppli Laplacian (see [Sch07]). Let $\eta_2:=\partial\Phi_2\in C^\infty_{n-1,\,1}(X)$. Thanks to the Calabi-Yau isomorphism (\ref{eqn:C-Y_isom_forms}), there exists a unique $\psi_2\in C^\infty_{0,\,1}(X,\,T^{1,\,0}X)$ such that $\psi_2\lrcorner u =\eta_2$. In particular, $\partial(\psi_2\lrcorner u)=0$ and $(\bar\partial\psi_2) \lrcorner u = \bar\partial(\psi_2\lrcorner u) =\bar\partial\eta_2 = (1/2)\,[\psi_1,\,\psi_1]\lrcorner u$. This means that $$\bar\partial\psi_2 = \frac{1}{2}\,[\psi_1,\,\psi_1],$$

\noindent so $\psi_2$ is a solution of (Eq.\,$1$). Moreover, by construction, $\psi_2$ has the extra key property that $\psi_2\lrcorner u\in\mbox{Im}\,\partial$.

We continue inductively to construct the forms $(\psi_N)_{N\geq 3}$. Suppose the forms $\psi_1, \psi_2, \dots , \psi_{N-1}\in C^\infty_{0,\,1}(X,\,T^{1,\,0}X)$ have been constructed as solutions of the equations (Eq.\,$(\nu-1)$) for all $\nu\in\{2,\dots , N-1\}$ with the further property $\psi_2\lrcorner u,\dots ,  \psi_{N-1}\lrcorner u\in\mbox{Im}\,\partial$. (Recall that $\psi_1\lrcorner u\in\ker d$.) Since $\partial(\psi_1\lrcorner u) = \partial(\psi_2\lrcorner u) = \dots = \partial(\psi_{N-1}\lrcorner u) =0$, the Tian-Todorov Lemma \ref{Lem:Tian-Todorov} guarantees that $[\psi_\mu,\,\psi_{N-\mu}]\lrcorner u\in\mbox{Im}\,\partial$ for all $\mu\in\{1,\dots , N-1\}$ and yields the first identity below: $$\sum\limits_{\mu=1}^{N-1}[\psi_\mu,\,\psi_{N-\mu}]\lrcorner u = -\partial\bigg(\sum\limits_{\mu=1}^{N-1}\psi_\mu\lrcorner(\psi_{N-\mu}\lrcorner u)\bigg)\in\partial({\cal Z}_2^{n-2,\,2}) = \mbox{Im}\,(\partial\bar\partial),$$

\noindent where the relation ``$\in$'' follows from assumption (\ref{eqn:extra-hyp_unobstructedness}) and the last identity is a consequence of the page-$1$-$\partial\bar\partial$-assumption on $X$. (See (i) of property (F) in Theorem \ref{The:page_r-1_ddbar-equivalence_bis}.)

Thus, there exists a form $\Phi_N\in C^\infty_{n-2,\,1}(X)$ such that $$\bar\partial\partial\Phi_N = \frac{1}{2}\,\sum\limits_{\mu=1}^{N-1}[\psi_\mu,\,\psi_{N-\mu}]\lrcorner u.$$

\noindent We choose $\Phi_N$ to be the solution of minimal $L^2_\omega$-norm of the above equation, so $\Phi_N\in\mbox{Im}\,(\partial\bar\partial)^\star$. Let $\eta_N:=\partial\Phi_N\in C^\infty_{n-1,\,1}(X)$. Thanks to the Calabi-Yau isomorphism (\ref{eqn:C-Y_isom_forms}), there exists a unique $\psi_N\in C^\infty_{0,\,1}(X,\,T^{1,\,0}X)$ such that $\psi_N\lrcorner u = \eta_N$. Hence, $(\bar\partial\psi_N)\lrcorner u =\bar\partial(\psi_N\lrcorner u) = \bar\partial\eta_N = \bar\partial\partial\Phi_N$, so $$\bar\partial\psi_N = \frac{1}{2}\,\sum\limits_{\mu=1}^{N-1}[\psi_\mu,\,\psi_{N-\mu}],$$

\noindent which means that $\psi_N$ is a solution of (Eq.\,$(N-1)$). Moreover, by construction, $\psi_N$ has the extra key property that $\psi_N\lrcorner u\in\mbox{Im}\,\partial$.

This finishes the induction process and completes the proof of Theorem \ref{The:Introd_page-1-ddbar_unobstructedness}.\end{proof}

The proof of Theorem \ref{The:Introd_page-1-ddbar_unobstructedness} shows that it suffices to check condition (\ref{eqn:extra-hyp_unobstructedness}) on a small subset of all forms. For instance, we have

\begin{Rem}\label{Rem:sub-double-complex} Suppose there is a sub-double-complex $C\subset A_X:=(C^{\infty}_{\Cdot,\Cdot}(X,\C),\partial,\bar\partial)$ such that

  \vspace{1ex}

  $(1)$\, the inclusion $C\subset A_X$ is an $E_1$-isomorphism;\footnote{Recall [Ste21] that this means it induces an isomorphism both in Dolbeault and conjugate Dolbeault cohomology (the latter being automatic if $C$ is a real sub-complex).}

\vspace{1ex}

$(2)$\, writing $T=\{\omega\in C_{0,1}^\infty(X,TX^{1,0})\mid \omega\lrcorner u\in (\ker d\cup \mbox{Im} \partial)\cap C\}$, the set $\partial(T\lrcorner T\lrcorner u)=[T,T]\lrcorner u$ is contained in $C$.

\vspace{1ex}

Then, for the conclusion of Theorem \ref{The:Introd_page-1-ddbar_unobstructedness} to hold, it suffices to check that the forms in $T$ satisfy condition \eqref{eqn:extra-hyp_unobstructedness} in Theorem \ref{The:Introd_page-1-ddbar_unobstructedness}.

\end{Rem}

\noindent {\it Proof.} By property $(1)$, we have $E_2^{n-1,1}(X)=E_2^{n-1,1}(C)$. We may therefore start the proof of Theorem \ref{The:Introd_page-1-ddbar_unobstructedness} by picking $\eta_1\in C$. By property $(2)$, $\psi_1\in T$. Again by property $(1)$, the inclusion $C\to A_X$ induces isomorphisms in Bott-Chern cohomology and higher pages of the Fr\"olicher spectral sequence of $C$ and $A_X$ [Ste21, Cor. 13], whenever a form in $C$ is exact in any way (w.r.t. $\partial$, $\bar\partial$, $\partial\bar\partial$, $d_r$,...) in $A_X$, one is able to find a primitive in $C$. Using this and property $(2)$, whenever we use the Calabi-Yau isomorphism, we see that, at each step in the proof of Theorem \ref{The:Introd_page-1-ddbar_unobstructedness}, one may take the solutions to (Eq.\,$(\nu)$) to lie in $C$.

 \hfill $\Box$

\section{Examples, applications and explicit computations}\label{section:explicit-computations}

In this section, we apply our results to certain classes of compact {\it complex paralellisable solvmanifolds} of complex dimension $3$ that were studied by Nakamura in [Nak75]. For the reader's convenience, we start by giving a brief rundown of the background by following Hasegawa's more recent treatment of these manifolds in [Has10], where Nakamura's discussion was expanded.

Let $X$ be a compact {\it complex paralellisable solvmanifold} with $\mbox{dim}_\C X=n$. It is standard that any such $X$ arises as a quotient $X=G/\Gamma$, where $G$ is a {\it simply connected solvable complex} Lie group and $\Gamma\subset G$ is a {\it co-compact lattice} (i.e. a discrete subgroup). Any such $G$ is {\it unimodular} (i.e. the left-invariant Haar measure of $G$ is also right-invariant, a fact that is equivalent to $|\det Ad_g|=1$ for every $g\in G$). This is equivalent to the Lie algebra $\fg$ of $G$ being {\it unimodular} (i.e. $\mbox{tr}\, (ad_\xi)=0$ for every $\xi\in\fg$).

Now, let $n=3$. Fix a $\C$-basis $\{X, Y, Z\}$ of $\fg$. The {\it unimodular solvable complex} Lie algebras $\fg$ of complex dimension $3$ are classified into the following types ([Nak75], [Has10]):

\vspace{1ex}

$(1)$\, $\fg$ is {\it Abelian}: $[X,\,Y] = [Y,\,Z] = [Z,\,X] = 0$;

\vspace{1ex}

$(2)$\, $\fg$ is {\it nilpotent}: $[X,\,Y] = Z,$ \hspace{2ex} $[X,\,Z] = [Y,\,Z] = 0$;

\vspace{1ex}

Note that in this case we have: $\fg^1:=[\fg,\,\fg]=\langle Z\rangle$, so $\fg^2:=[[\fg,\,\fg],\, \fg] = [\langle Z\rangle,\, \langle X,\,Y,\,Z\rangle] = 0$, hence $\fg$ is {\it $2$-step nilpotent}.

\vspace{1ex}

$(3)$\, $\fg$ is {\it non-nilpotent} (but {\it solvable}): $[X,\,Y] = -Y,$ \hspace{2ex} $[X,\,Z] = Z,$ \hspace{2ex} $[Y,\,Z] = 0$.

\vspace{1ex}

Note that in this case we have: $\fg^1:=[\fg,\,\fg]= \langle Y,\,Z\rangle$, so $\fg^2:=[[\fg,\,\fg],\, \fg] = \langle Y,\,Z\rangle = \fg^1$, so $\fg$ is indeed non-nilpotent. However, $\fg^{(2)}:=[[\fg,\,\fg],\,[\fg,\,\fg]] = [\langle Y,\,Z\rangle,\,\langle Y,\,Z\rangle] = 0$, so $\fg$ is indeed solvable.

The solvmanifolds $X=G/\Gamma$ corresponding to Lie groups $G$ whose Lie algebras $\fg$ are of this type $(3)$ are usually called {\bf Nakamura manifolds}. They are not nilmanifolds.

\vspace{2ex}

The {\it lattices} $\Gamma\subset G$ of the {\it simply connected solvable complex} Lie groups $G$ whose Lie algebras $\fg$ belong to the respective above classes are completely determined as follows. (See ([Nak75], [Has10]).)

\vspace{1ex}

$(1)$\, If $\fg$ is {\it Abelian}, then $G=(\C^3,\,+)$ and any lattice $\Gamma\subset G$ is $\Z$-generated by an $\R$-basis of $\C^3\simeq\R^6$. Hence, $X:=G/\Gamma$ is a {\it complex torus}.

\vspace{1ex}

$(2)$\, If $\fg$ is {\it nilpotent}, then $G$ is the {\it semi-direct product} $G=\C^2\rtimes_\phi\C$ with $$\phi:\C\longrightarrow\mbox{Aut}(\C^2), \hspace{2ex} \phi(x) = \begin{pmatrix} 1 & 0 \\

  x & 1

\end{pmatrix}.$$ This means that the {\it group operation} on $G=\C^2\rtimes_\phi\C$ is defined by $$(a,\,b,\,c)\star(x,\,y,\,z):= (a+x,\, (b,\,c) + \phi(a)(y,\,z)) = (a+x,\,b+y,\,c+z+ay).$$

Any lattice $\Gamma\subset G$ is of the shape $$\Gamma = \Delta\rtimes_\phi\Lambda, \hspace{3ex} \mbox{where} \hspace{2ex} \Delta\subset\C^2\simeq\R^4 \hspace{1ex} \mbox{and} \hspace{1ex} \Lambda\subset\C\simeq\R^2 \hspace{2ex} \mbox{are lattices}.$$

Hasegawa [Has10] goes on to show that we can assume, without loss of generality, these lattices to be generated over $\Z$ as follows: $$\Lambda = \langle 1,\,\lambda\rangle, \hspace{2ex} \Delta = \langle (\alpha_1,\,\beta_1),\, (\alpha_2,\,\beta_2),\, (0,\,\alpha_1),\,(0,\,\alpha_2)\rangle,$$ where $\lambda\in\C\setminus\R$, $\beta_1,\beta_2\in\C$ are arbitrary and $\alpha_1,\alpha_2\in\C$ are $\R$-linearly independent such that $(\alpha_1,\,\alpha_2)$ is a $\lambda$-eigenvector of some $A\in GL(2,\,\Z)$.

The Iwasawa manifold $I^{(3)} = G/\Gamma$ belongs to this class. It is obtained for the choice of lattice $\Gamma$ defined by $\lambda=i$, $\beta_1=\beta_2=0$, $\alpha_1=1$, $\alpha_2=i$.

\vspace{1ex}

$(3)$\,  If $\fg$ is {\it non-nilpotent}, then $G$ is the {\it semi-direct product} $G=\C^2\rtimes_\phi\C$ with $$\phi:\C\longrightarrow\mbox{Aut}(\C^2), \hspace{2ex} \phi(x) = \begin{pmatrix} e^{x} & 0 \\

  0 & e^{-x}

\end{pmatrix}.$$ This means that the {\it group operation} on $G=\C^2\rtimes_\phi\C$ is defined by $$(a,\,b,\,c)\star(x,\,y,\,z):= (a+x,\, (b,\,c) + \phi(a)(y,\,z)) = (a+x,\,b+e^{a}y,\,c+e^{-a}z).$$

Any lattice $\Gamma\subset G$ is of the shape $$\Gamma = \Delta\rtimes_\phi\Lambda, \hspace{3ex} \mbox{where} \hspace{2ex} \Delta\subset\C^2\simeq\R^4 \hspace{1ex} \mbox{and} \hspace{1ex} \Lambda\subset\C\simeq\R^2 \hspace{2ex} \mbox{are lattices}.$$

Hasegawa [Has10] goes on to show these lattices to be generated over $\Z$ as follows: $$\Lambda = \langle\lambda,\,\mu\rangle, \hspace{2ex} \Delta = \langle (\alpha_1,\,\beta_1),\, (\alpha_2,\,\beta_2),\, (\alpha_3,\,\beta_3),\,(\alpha_4,\,\beta_4)\rangle,$$ where $\lambda,\,\mu\in\C$ satisfy the following condition. There exist {\it commuting semi-simple} matrices $A,B\in SL_4(\Z)$ and vectors $\alpha=(\alpha_1,\alpha_2,\alpha_3,\alpha_4),\, \beta=(\beta_1,\beta_2,\beta_3,\beta_4)\in\C^4$ such that $\alpha$ and $\beta$ are eigenvectors for $A$ with eigenvalues $e^{-\lambda}$, resp. $e^\lambda$, while $\alpha$ and $\beta$ are also eigenvectors for $B$ with eigenvalues $e^{-\mu}$, resp. $e^\mu$. Conversely, all lattices $\Gamma$ of $G=\C^2\rtimes_\phi\C$ arise in this way from arbitrary {\it commuting semi-simple} matrices $A,B\in SL_4(\Z)$.

This third class of solvmanifolds (corresponding to the case where $\fg$ is {\it non-nilpotent}) has two subclasses:

\vspace{1ex}

$\bullet$ {\it subclass $(3a)$}: when either $Im(\lambda)\notin\pi\Z$ or $Im(\mu)\notin\pi\Z$.

In this case, $h^{0,\,1}_{\bar\partial}(X) =1$, $\mbox{dim}_\C H^{0,\,1}_{\bar\partial}(X,\,T^{1,\,0}X) = 3$ and $X$ has {\it unobstructed} deformations [Nak75, p. 99].

\vspace{1ex}

$\bullet$ {\it subclass $(3b)$}: when $Im(\lambda),Im(\mu)\in\pi\Z$.

In this case, $h^{0,\,1}_{\bar\partial}(X) =3$, $\mbox{dim}_\C H^{0,\,1}_{\bar\partial}(X,\,T^{1,\,0}X) = 9$ and $X$ has {\it obstructed} deformations [Nak75, eqn. (3.3.)].

\subsection{Case of the Nakamura manifolds of type $(3b)$}\label{subsection:Nakamura}

We now prove Proposition \ref{Prop:obstructed-unobstructed_ess-non-ess} by showing that the Nakamura manifolds of type $(3b)$ constitute an example of manifolds satisfying the conditions of that statement. However, we cannot immediately apply Theorem \ref{The:Introd_page-1-ddbar_unobstructedness} because of

\begin{Obs}\label{Obs:no-hypothesis_yes-result} The Nakamura manifolds of class $(3b)$ do not satisfy hypothesis (\ref{eqn:extra-hyp_unobstructedness}) of Theorem \ref{The:Introd_page-1-ddbar_unobstructedness}.

\end{Obs}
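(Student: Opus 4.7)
The goal is to give an explicit construction of a pair $\psi_1,\rho_1 \in C^\infty_{0,1}(X,T^{1,0}X)$ that satisfies the premise of (\ref{eqn:extra-hyp_unobstructedness}) but for which $\psi_1\lrcorner(\rho_1\lrcorner u)\notin \mathcal{Z}_2^{1,\,2}$. On a Nakamura solvmanifold $X=G/\Gamma$ of class (3b), fix the left-invariant $(1,0)$-coframe $\{\varphi_1,\varphi_2,\varphi_3\}$ with dual frame $\{\theta_1,\theta_2,\theta_3\}$ satisfying
\begin{equation*}
d\varphi_1=0,\qquad d\varphi_2=\varphi_1\wedge\varphi_2,\qquad d\varphi_3=-\varphi_1\wedge\varphi_3,
\end{equation*}
and set $u:=\varphi_1\wedge\varphi_2\wedge\varphi_3$, a $d$-closed nonvanishing holomorphic $(3,0)$-form.

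First, I would invoke the two additional globally defined $\bar\partial$-closed $(0,1)$-forms $\bar\eta_2,\bar\eta_3$ (essentially of the shape $e^{\pm(w_3-\bar w_3)}\bar\varphi_{2},e^{\mp(w_3-\bar w_3)}\bar\varphi_3$) that descend to $X$ precisely under the imaginary-period condition $\mathrm{Im}(\lambda),\mathrm{Im}(\mu)\in\pi\Z$ defining class (3b); these are the forms that raise $h^{0,1}(X)$ from $1$ to $3$, cf.\ [Nak75] and [Has10]. They are $\bar\partial$-closed but have nonzero $\partial$-images that combine the left-invariant coframe with the exponential factors.

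Second, I would try pairs of the form $\psi_1:=\bar\eta_i\otimes\theta_j$, $\rho_1:=\bar\eta_k\otimes\theta_\ell$ (for instance $\psi_1=\bar\eta_2\otimes\theta_3$, $\rho_1=\bar\eta_3\otimes\theta_2$). By the Leibniz rule and the structure equations above, the condition $\psi_1\lrcorner u,\rho_1\lrcorner u\in\ker d$ reduces to the algebraic identity $\partial\bar\eta_i\wedge(\theta_j\lrcorner u)=0$, which is satisfied by the correct choice of indices because $\theta_j\lrcorner u$ involves only two of the $\varphi$'s. Having verified the premise, one then computes the double contraction $\psi_1\lrcorner(\rho_1\lrcorner u)$, a $(1,2)$-form of the shape $\pm\bar\eta_i\wedge\bar\eta_k\wedge\varphi_\ast$, directly from the definitions.

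Finally, I would show this $(1,2)$-form is not $E_2$-closed: either $\bar\partial$ of it is non-zero (so it fails to represent a Dolbeault class at all), or else $\bar\partial$-closed but $\partial$ of it is not $\bar\partial$-exact. The nontrivial $\partial$-action on the $\bar\eta_i$'s is what transmits the obstruction into the $(1,2)$-sector. The main obstacle is pinpointing the right combination of extra class-(3b) forms and tangent fields: any attempt confined to the left-invariant nilpotent sector collapses --- for instance, $\psi_1=\bar\varphi_1\otimes\theta_2$, $\rho_1=\bar\varphi_1\otimes\theta_3$ give $\psi_1\lrcorner(\rho_1\lrcorner u)=\pm\bar\varphi_1\wedge\bar\varphi_1\wedge\varphi_1=0$ by type --- so one must really exploit the genuinely class-(3b) forms $\bar\eta_i$, whose mixed $(\partial,\bar\partial)$-behaviour is what witnesses the failure of (\ref{eqn:extra-hyp_unobstructedness}).
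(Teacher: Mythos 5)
Your identification of the class-(3b) ``extra'' forms is correct (in the coordinates $(z_3,z_2,z_1)$ of [Has10]/[AK17], with invariant coframe $\eta_1=dz_1$, $\eta_2=e^{-z_1}dz_2$, $\eta_3=e^{z_1}dz_3$ and $u=dz_{123}$, they are $e^{-z_1}d\bar z_2$ and $e^{z_1}d\bar z_3$), and your first form $\bar\eta_2\otimes\theta_3=e^{-2z_1}d\bar z_2\otimes\partial/\partial z_3$ is exactly one of the two forms the paper uses. But your proposed partner, and in fact every pair drawn from your restricted family, fails. For $\psi_1=\bar\eta_2\otimes\theta_3=e^{-2z_1}d\bar z_2\otimes\partial/\partial z_3$ and $\rho_1=\bar\eta_3\otimes\theta_2=e^{2z_1}d\bar z_3\otimes\partial/\partial z_2$, the premise does hold ($\psi_1\lrcorner u=\pm e^{-2z_1}dz_{12\bar 2}$ and $\rho_1\lrcorner u=\pm e^{2z_1}dz_{13\bar 3}$ are $d$-closed), but in the double contraction the exponentials \emph{cancel}: $\psi_1\lrcorner(\rho_1\lrcorner u)=\pm\, dz_1\wedge d\bar z_2\wedge d\bar z_3$, a constant-coefficient, $d$-closed form, hence an element of $\mathcal{Z}_2^{1,2}$. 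Equivalently, $[\psi_1,\rho_1]=0$ because all coefficients depend only on $z_1$ while both vector parts are the fibre directions $\partial/\partial z_2,\partial/\partial z_3$, so no derivative ever hits an exponential; this same computation kills all pairs of your type whose vector parts are $\theta_2,\theta_3$. The only way to get a nonzero bracket is to let one vector part be the base direction $\theta_1=\partial/\partial z_1$, but within your family that destroys the premise: e.g. $(\bar\eta_2\otimes\theta_1)\lrcorner u=\pm e^{-z_1}dz_{23\bar 2}$ is not $d$-closed, and it is not $\partial$-exact either (its conjugate is not even $\bar\partial$-closed). So no pair built from two of the exponential forms can witness the failure of hypothesis (\ref{eqn:extra-hyp_unobstructedness}).

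The missing idea --- and what the paper actually does --- is to pair the exponential form with the \emph{invariant} form in the base direction, i.e. take $\psi=e^{-2z_1}d\bar z_2\otimes\partial/\partial z_3$ and $\rho=d\bar z_1\otimes\partial/\partial z_1$. This $\rho$ lies precisely in the left-invariant sector that your closing paragraph dismisses (what collapses is only the case where \emph{both} forms are invariant); its contraction $\rho\lrcorner u=\pm dz_{23\bar 1}$ is $d$-closed, so the premise holds, and now $\partial/\partial z_1$ differentiates the coefficient $e^{-2z_1}$, giving $\partial\big(\psi\lrcorner(\rho\lrcorner u)\big)=-2e^{-2z_1}dz_{12\overline{12}}$. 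One then needs a cohomological certificate of non-exactness, which your sketch also leaves open: by the tables of [AK17] this $d$-closed form represents a nonzero Bott-Chern class, hence is not $\partial\bar\partial$-exact, and since the page-$1$-$\partial\bar\partial$ property gives $\partial\mathcal{Z}_2^{1,2}=\operatorname{Im}(\partial\bar\partial)$ (Theorem \ref{The:page_r-1_ddbar-equivalence_bis}, (F)(i)), it follows that $\psi\lrcorner(\rho\lrcorner u)\notin\mathcal{Z}_2^{1,2}$.
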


\noindent {\it Proof.} Let us denote the coordinates on $G=\C^2\rtimes \C$ by $(z_3,z_2,z_1)$, which is the ordering adopted in [Has10] and [AK17]. It differs from the original ordering in [Nak75], where the roles of $z_3$ and $z_2$ are reversed. We will use standard abbreviations like $dz_{12\bar{3}}:=dz_1\wedge dz_2\wedge d\bar{z}_3$ and set $u:=dz_{123}$. Set
\[\psi:=e^{-2z_1}d\bar{z}_2\otimes\frac{\partial}{\partial z_3}\text{ and }\rho:=d\bar{z}_1\otimes \frac{\partial}{\partial z_1}.
\]
Then
\[
\del(\psi\lrcorner(\rho\lrcorner u))=-2e^{-2z_1}dz_{12\overline{12}},
\]
which defines a nonzero class in $H_{BC}^{2,2}$ by [AK17]. Therefore it is not $\bar\partial\partial$-exact, which implies that $(\psi\lrcorner(\rho\lrcorner u))\not\in Z_2^{n-2,2}$ since $\del Z_2^{n-2,2}=\operatorname{Im} (\del\delbar)$ by the page-1-$\partial\bar\partial$ hypothesis.\hfill $\Box$

\vspace{2ex}
We now show that the Nakamura manifolds of class $(3b)$ have unobstructed essential deformations despite them not satisfying hypothesis (\ref{eqn:extra-hyp_unobstructedness}) of Theorem \ref{The:Introd_page-1-ddbar_unobstructedness}. Consider the left-invariant forms on $G$ given by $\eta_1:=dz_1$, $\eta_2:=e^{-z_1}dz_2$, $\eta_3:=e^{z_1}dz_3$ and equip the Nakamura manifold $X$ with the metric $\omega_0:=\sum_{i=1}^{3}\eta_i\otimes\bar{\eta}_i$.
\vspace{2ex}

\noindent {\it Proof of Proposition \ref{Prop:obstructed-unobstructed_ess-non-ess}.} From table [AK17, table 7], we get, with the same notational conventions as in the previous proof,
\[
E_2^{2,1}(X)=\langle [dz_{12\bar{3}}]_{E_2}, [dz_{13\bar{2}}]_{E_2}, [dz_{23\bar{1}}]_{E_2}\rangle\overset{J_{\omega_0}^{2,1}}\cong\langle [dz_{12\bar{3}}]_{E_1}, [dz_{13\bar{2}}]_{E_1}, [dz_{23\bar{1}}]_{E_1}\rangle= E_1^{p,q}(X)_{ess}^{\omega_0},
\]
where we use that the chosen representatives are closed and co-closed. Under the (inverse of) the Calabi-Yau isomorphism this space corresponds to:
\[
H_{ess}^{0,1}(X,TX^{1,0})=\langle d\bar{z}_3\otimes \frac{\partial}{\partial z_3}, d\bar{z}_2\otimes \frac{\partial}{\partial z_2}, d\bar{z}_1\otimes \frac{\partial}{\partial z_1}\rangle
\]
Since in [Nak75] the roles of $z_2$ and $z_3$ are reversed, in the notation of [Nak75] we have:
\[
H_{ess}^{0,1}(X,TX^{1,0})=\langle \theta_2\varphi^\ast_2, \theta_3 \varphi_3^\ast, \theta_1\varphi_1^\ast\rangle.
\]
This means that an essential deformation corresponds to an element
\[
\mu=t_{22}\cdot \theta_2\varphi^\ast_2 + t_{33}\cdot \theta_3 \varphi_3^\ast + t_{11}\cdot \theta_1\varphi_1^\ast.
\]
In particular, $t_{i\lambda}=0$ for $(i,\lambda)\neq (2,2), (3,3), (1,1)$. So the conditions in [Nak75, eqn (3.3.)] are trivially satisfied, i.e. the essential deformations are unobstructed.

Note also that, since $t_{12}=0$, cases $(3)$ and $(4)$ of [Nak75, p.98f] do not occur. This implies that, e.g., the universal covering space of any small essential deformation $X_t$ of $X_0=X$ is always $\C^3$.  \hfill $\Box$

\subsection{Further computations}\label{subsection:further-computations}

In this subsection, we spell out the proof of Lemma \ref{Lem:Iwasawas_def-hypotheses}. The subcomplex of left-invariant forms on $I^{(3)}$ and $I^{(5)}$ satisfies the conditions of Remark \ref{Rem:sub-double-complex} by [Sak76], which is why in the following we will, without further mentioning, work with left-invariant forms only. \\

 $\bullet$\, {\it Case where $X=I^{(3)}$.} We use the notation of Example \ref{Ex:Iwasawa_dim3_ess}, but also put $\varphi_1:=\alpha$, $\varphi_2:=\beta$ and $\varphi_3:=\gamma$. We have: $d\varphi_1=d\varphi_2=0$ and $d\varphi_3=-\varphi_1\wedge\varphi_2$. The dual basis of $(1,\,0)$-vector fields consists of $$\theta_1=\frac{\partial}{\partial z_1}, \hspace{2ex} \theta_2=\frac{\partial}{\partial z_2} + z_1\,\frac{\partial}{\partial z_3}, \hspace{2ex} \theta_3=\frac{\partial}{\partial z_3},$$

\noindent (actually of the vector fields induced by these ones on $X$ by passage to the quotient) whose mutual Lie brackets are as follows: $$[\theta_1,\,\theta_2] = -[\theta_2,\,\theta_1]=\theta_3 \hspace{3ex}\mbox{and}\hspace{3ex} [\theta_i,\,\theta_j]=0 \hspace{2ex}\mbox{whenever}\hspace{2ex} \{i,\,j\}\neq\{1,\,2\}.$$

\noindent In particular, $H^{0,\,1}(X,\,T^{1.\,0}X) = \langle[\overline\varphi_1\otimes\theta_i],\,[\overline\varphi_2\otimes\theta_i]\,\mid\,i=1,\dots , 3\rangle$, so $\mbox{dim}_\C H^{0,\,1}(X,\,T^{1.\,0}X)=6$.

Note that all the $(2,\,1)$-forms $(\overline\varphi_1\otimes\theta_i)\lrcorner u$ and $(\overline\varphi_2\otimes\theta_i)\lrcorner u$ are $d$-closed for $i\in\{1,\,2,\,3\}$, so every Dolbeault class in $H^{2,\,1}_{\bar\partial}(X)$ can be represented by a $d$-closed form.

\vspace{2ex}

 (a)\, Let $\psi_1(t), \rho_1(s)\in C^\infty_{0,\,1}(X,\,T^{1,\,0}X)$ such that $\psi_1(t)\lrcorner u, \rho_1(s)\lrcorner u\in\ker d$. Then, $$\psi_1(t)=\sum\limits_{i=1}^3\sum\limits_{\lambda=1}^2 t_{i\lambda}\,\theta_i\overline\varphi_\lambda, \hspace{1ex}\mbox{so}\hspace{1ex}\psi_1(t)\lrcorner u = \sum\limits_{i=1}^3 (-1)^{i-1}\,\sum\limits_{\lambda=1}^2 t_{i\lambda}\,\overline\varphi_\lambda\wedge\widehat\varphi_i,$$ $$\rho_1(s)=\sum\limits_{j=1}^3\sum\limits_{\mu=1}^2 s_{j\mu}\,\theta_j\overline\varphi_\mu, \hspace{1ex}\mbox{so}\hspace{1ex}\rho_1(s)\lrcorner u = \sum\limits_{j=1}^3 (-1)^{j-1}\,\sum\limits_{\mu=1}^2 s_{j\mu}\,\overline\varphi_\mu\wedge\widehat\varphi_j,$$

\noindent where $\widehat\varphi_j$ stands for $\varphi_1\wedge\varphi_2\wedge\varphi_3$ with $\varphi_j$ omitted.

Since $\psi_1(t)\lrcorner u, \rho_1(s)\lrcorner u\in\ker\bar\partial$, $\psi_1(t)$ and $\rho_1(s)$ are $\bar\partial$-closed for the $\bar\partial$ of the holomorphic structure of $T^{1,\,0}X$, hence $\psi_1(t)\lrcorner(\rho_1(s)\lrcorner u)\in{\cal Z}_1^{1,\,2}$. Moreover, since $\psi_1(t)\lrcorner u, \rho_1(s)\lrcorner u\in\ker\partial$, the Tian-Todorov Lemma \ref{Lem:Tian-Todorov} ensures that $$\partial(\psi_1(t)\lrcorner(\rho_1(s)\lrcorner u)) = [\psi_1(t)\lrcorner u,\,\rho_1(s)\lrcorner u],$$

\noindent where $[\psi_1(t)\lrcorner u,\,\rho_1(s)\lrcorner u]$ is the scalar-valued $(n-1,\,2)$-form defined by the identity $[\psi_1(t)\lrcorner u,\,\rho_1(s)\lrcorner u] = [\psi_1(t),\,\rho_1(s)]\lrcorner u$. So, we have to show that $[\psi_1(t)\lrcorner u,\,\rho_1(s)\lrcorner u]$ is $\bar\partial$-exact. We get:

\begin{eqnarray*}[\psi_1(t),\,\rho_1(s)] = \sum\limits_{1\leq i,\,j\leq 3}\sum\limits_{1\leq\lambda,\,\mu\leq 2}t_{i\lambda}\,s_{j\mu}\,[\theta_i,\,\theta_j]\,\overline\varphi_\lambda\wedge\overline\varphi_\mu = D_{t,s}\,\theta_3\,\overline\varphi_1\wedge\overline\varphi_2,\end{eqnarray*}

\noindent where $D_{t,s}=(t_{11}\,s_{22} + t_{22}\,s_{11} - t_{12}\,s_{21} - t_{21}\,s_{12})$. Hence, \begin{eqnarray*}[\psi_1(t),\,\rho_1(s)]\lrcorner u = D_{t,s}\,\varphi_1\wedge\varphi_2\wedge\overline\varphi_1\wedge\overline\varphi_2 = \bar\partial(D_{t,s}\,\partial\varphi_3\wedge\overline\varphi_3) = \bar\partial\partial(D_{t,s}\,\varphi_3\wedge\overline\varphi_3)\in\mbox{Im}\,\bar\partial,\end{eqnarray*}

\noindent as desired.

We conclude that $\psi_1(t)\lrcorner(\rho_1(s)\lrcorner u)\in{\cal Z}_1^{1,\,2}$ and $\partial(\psi_1(t)\lrcorner(\rho_1(s)\lrcorner u))\in\mbox{Im}\,\bar\partial$, hence $\psi_1(t)\lrcorner(\rho_1(s)\lrcorner u)\in{\cal Z}_2^{1,\,2}$, as desired.

\vspace{2ex}

 (b)\, Let $\psi_1(t), \rho_1(s)\in C^\infty_{0,\,1}(X,\,T^{1,\,0}X)$ such that $\psi_1(t)\lrcorner u\in\ker d$ and $\rho_1(s)\lrcorner u\in\mbox{Im}\,\partial$. Then, $\psi_1(t)=\sum_{1\leq i\leq 3}\sum_{1\leq\lambda\leq 2} t_{i\lambda}\,\theta_i\overline\varphi_\lambda$ and $\rho_1(s)=(\sum_{1\leq\mu\leq 3} s_{\mu}\,\overline\varphi_\mu)\,\theta_3,$ so $$\rho_1(s)\lrcorner u = \sum_{1\leq\mu\leq 3} s_{\mu}\,\overline\varphi_\mu\wedge\varphi_1\wedge\varphi_2 = \partial(-\sum_{1\leq\mu\leq 3} s_{\mu}\,\varphi_3\wedge\overline\varphi_\mu)\in\mbox{Im}\,\partial.$$

On the one hand, we get $\displaystyle\psi_1(t)\lrcorner(\rho_1(s)\lrcorner u) = \sum\limits_{\lambda=1}^2\sum\limits_{\mu=1}^3 t_{1\lambda}\,s_\mu\,\overline\varphi_\lambda\wedge\overline\varphi_\mu\wedge\varphi_2 - \sum\limits_{\lambda=1}^2\sum\limits_{\mu=1}^3 t_{2\lambda}\,s_\mu\,\overline\varphi_\lambda\wedge\overline\varphi_\mu\wedge\varphi_1,$

\noindent hence $\displaystyle\bar\partial(\psi_1(t)\lrcorner(\rho_1(s)\lrcorner u)) = -\sum\limits_{\lambda=1}^2 t_{1\lambda}\,s_3\,\overline\varphi_\lambda\wedge\bar\partial\overline\varphi_3\wedge\varphi_2 + \sum\limits_{\lambda=1}^2 t_{2\lambda}\,s_3\,\overline\varphi_\lambda\wedge\bar\partial\overline\varphi_3\wedge\varphi_1 =0$ because $\bar\partial\overline\varphi_3 = -\overline\varphi_1\wedge\overline\varphi_2$. Thus, $\psi_1(t)\lrcorner(\rho_1(s)\lrcorner u)\in\ker\bar\partial$.

On the other hand, since $[\theta_i,\,\theta_3] =0$ for all $i$, we get $$\partial(\psi_1(t)\lrcorner(\rho_1(s)\lrcorner u)) = [\psi_1(t),\,\rho_1(s)]\lrcorner u = \sum\limits_{i=1}^3\sum\limits_{\lambda=1}^2\sum\limits_{\mu=1}^3 t_{i\lambda}\,s_\mu\,\overline\varphi_\lambda\wedge\overline\varphi_\mu\,[\theta_i,\,\theta_3] =0$$

We conclude that $\psi_1(t)\lrcorner(\rho_1(s)\lrcorner u)\in{\cal Z}_2^{1,\,2}$.

\vspace{2ex}

 (c)\, If $\psi_1(t), \rho_1(s)\in C^\infty_{0,\,1}(X,\,T^{1,\,0}X)$ are such that $\psi_1(t)\lrcorner u$ and $\rho_1(s)\lrcorner u$ both lie in $\mbox{Im}\,\partial$, then $\psi_1(t)= (\sum_{1\leq\lambda\leq 3} t_{\lambda}\,\overline\varphi_\lambda)\,\theta_3$ and $\rho_1(s)=(\sum_{1\leq\mu\leq 3} s_{\mu}\,\overline\varphi_\mu)\,\theta_3$. We get $$\psi_1(t)\lrcorner(\rho_1(s)\lrcorner u) = -(\sum_{1\leq\lambda\leq 3} t_{\lambda}\,\overline\varphi_\lambda)\wedge \sum_{1\leq\mu\leq 3} s_{\mu}\,\overline\varphi_\mu\wedge[\theta_3\lrcorner(\varphi_1\wedge\varphi_2)] = 0$$

\noindent since $\theta_3\lrcorner\varphi_1 = \theta_3\lrcorner\varphi_2 = 0$. In particular, $\psi_1(t)\lrcorner(\rho_1(s)\lrcorner u)\in{\cal Z}_2^{1,\,2}$.

\vspace{3ex}

$\bullet$\, {\it Case where $X=I^{(5)}$.} We use the notation of Example \ref{Ex:Kuranishi_I5}.

\vspace{1ex}

(a)\, Let $\psi_1(t), \rho_1(s)\in C^\infty_{0,\,1}(X,\,T^{1,\,0}X)$ such that $\psi_1(t)\lrcorner u, \rho_1(s)\lrcorner u\in\ker d$. Then, $$\psi_1(t)=\sum\limits_{i=1}^5\sum\limits_{\lambda=1}^2 t_{i\lambda}\,\theta_i\overline\varphi_\lambda, \hspace{1ex}\mbox{so}\hspace{1ex}\psi_1(t)\lrcorner u = \sum\limits_{i=1}^5 (-1)^{i-1}\,\sum\limits_{\lambda=1}^2 t_{i\lambda}\,\overline\varphi_\lambda\wedge\widehat\varphi_i,$$ $$\rho_1(s)=\sum\limits_{j=1}^5\sum\limits_{\mu=1}^2 s_{j\mu}\,\theta_j\overline\varphi_\mu, \hspace{1ex}\mbox{so}\hspace{1ex}\rho_1(s)\lrcorner u = \sum\limits_{j=1}^5 (-1)^{j-1}\,\sum\limits_{\mu=1}^2 s_{j\mu}\,\overline\varphi_\mu\wedge\widehat\varphi_j,$$

\noindent where $\widehat\varphi_j$ stands for $\varphi_1\wedge\dots\wedge\varphi_5$ with $\varphi_j$ omitted.

Since $[\theta_i,\,\theta_j]=0$ unless $\{i,\,j\}\subset\{1,\,2,\,3\}$ and given the other values for $[\theta_i,\,\theta_j]$, we get: \begin{eqnarray*}[\psi_1(t),\,\rho_1(s)]\lrcorner u & = & -D_3(t,\,s)\,\varphi_1\wedge\varphi_2\wedge\varphi_4\wedge\varphi_5\wedge\overline\varphi_1\wedge\overline\varphi_2 + D_2(t,\,s)\,\varphi_1\wedge\varphi_2\wedge\varphi_3\wedge\varphi_5\wedge\overline\varphi_1\wedge\overline\varphi_2 \\
 & - & D_1(t,\,s)\,\varphi_1\wedge\varphi_2\wedge\varphi_3\wedge\varphi_4\wedge\overline\varphi_1\wedge\overline\varphi_2,  \hspace{3ex} \mbox{where}\end{eqnarray*}

\noindent $D_3(t,\,s) = \begin{vmatrix}
                              t_{11} & t_{12} \\
                              s_{21} & s_{22}    
                              \end{vmatrix} - \begin{vmatrix}
                                               s_{11} & s_{12} \\
                                               t_{21} & t_{22}    
                                                \end{vmatrix}$, $D_2(t,\,s) = \begin{vmatrix}
                              t_{11} & t_{12} \\
                              s_{31} & s_{32}    
                              \end{vmatrix} - \begin{vmatrix}
                                               s_{11} & s_{12} \\
                                               t_{31} & t_{32}    
                                                \end{vmatrix}$, $D_1(t,\,s) = \begin{vmatrix}

                              t_{21} & t_{22} \\

                              s_{31} & s_{32}

                              \end{vmatrix} - \begin{vmatrix}

                                               s_{21} & s_{22} \\

                                               t_{31} & t_{32}

                                                \end{vmatrix}$.

\vspace{2ex}

\noindent Now, since $\varphi_1\wedge\varphi_2 = \partial\varphi_3$ and $\bar\varphi_1\wedge\overline\varphi_2=\bar\partial\overline\varphi_3$, using also the other properties of the $\varphi_i$'s, we get \begin{eqnarray*}\varphi_1\wedge\varphi_2\wedge\varphi_4\wedge\varphi_5\wedge\overline\varphi_1\wedge\overline\varphi_2  & = & \bar\partial\partial(\varphi_3\wedge\varphi_4\wedge\varphi_5\wedge\overline\varphi_3)\\
\varphi_1\wedge\varphi_2\wedge\varphi_3\wedge\varphi_5\wedge\overline\varphi_1\wedge\overline\varphi_2  & = & \bar\partial\partial(\varphi_2\wedge\varphi_4\wedge\varphi_5\wedge\overline\varphi_3).\end{eqnarray*}

\noindent Similarly, since $\varphi_2\wedge\varphi_3 = \partial\varphi_5$ and $\bar\varphi_1\wedge\overline\varphi_2=\bar\partial\overline\varphi_3$, we get \begin{eqnarray*}\varphi_1\wedge\varphi_2\wedge\varphi_3\wedge\varphi_4\wedge\overline\varphi_1\wedge\overline\varphi_2 = \bar\partial\partial(\varphi_1\wedge\varphi_4\wedge\varphi_5\wedge\overline\varphi_3).\end{eqnarray*}

\noindent We conclude that $\partial(\psi_1(t)\lrcorner(\rho_1(s)\lrcorner u)) =[\psi_1(t),\,\rho_1(s)]\lrcorner u \in\mbox{Im}\,(\partial\bar\partial)\subset\mbox{Im}\,\bar\partial$. Meanwhile, $\psi_1(t)\lrcorner(\rho_1(s)\lrcorner u)$ is $\bar\partial$-closed (because $\psi_1(t)\lrcorner u$ and $\rho_1(s)\lrcorner u$ are), hence $\psi_1(t)\lrcorner(\rho_1(s)\lrcorner u)\in{\cal Z}_2^{4,\,1}$, as desired.

\vspace{2ex}

(b)\, Let $\psi_1(t), \rho_1(s)\in C^\infty_{0,\,1}(X,\,T^{1,\,0}X)$ such that $\psi_1(t)\lrcorner u\in\ker d$ and $\rho_1(s)\lrcorner u\in\mbox{Im}\,\partial$. Then, $$\psi_1(t)=\sum\limits_{i=1}^5\sum\limits_{\lambda=1}^2 t_{i\lambda}\,\theta_i\overline\varphi_\lambda, \hspace{1ex}\mbox{so}\hspace{1ex}\psi_1(t)\lrcorner u = \sum\limits_{i=1}^5 (-1)^{i-1}\,\sum\limits_{\lambda=1}^2 t_{i\lambda}\,\overline\varphi_\lambda\wedge\widehat\varphi_i,$$ $$\rho_1(s)=\sum\limits_{j=3}^5 s_j\,\theta_j\overline\varphi_3, \hspace{1ex}\mbox{so}\hspace{1ex}\rho_1(s)\lrcorner u = \sum\limits_{j=3}^5 (-1)^{j-1}\, s_j\,\overline\varphi_3\wedge\widehat\varphi_j.$$

\noindent  Indeed, in the case of $\rho_1(s)\lrcorner u$, we have

\vspace{1ex}

\hspace{9ex} $\widehat\varphi_3 = \partial(\varphi_3\wedge\varphi_4\wedge\varphi_5)$, so $\overline\varphi_3\wedge\widehat\varphi_3 = -\partial(\overline\varphi_3\wedge\varphi_3\wedge\varphi_4\wedge\varphi_5)$,

\vspace{1ex}

\hspace{9ex} $\widehat\varphi_4 = \partial(\varphi_2\wedge\varphi_4\wedge\varphi_5)$, so $\overline\varphi_3\wedge\widehat\varphi_4 = -\partial(\overline\varphi_3\wedge\varphi_2\wedge\varphi_4\wedge\varphi_5)$,

\vspace{1ex}

\hspace{9ex} $\widehat\varphi_5 = \partial(\varphi_1\wedge\varphi_4\wedge\varphi_5)$, so $\overline\varphi_3\wedge\widehat\varphi_5 = -\partial(\overline\varphi_3\wedge\varphi_1\wedge\varphi_4\wedge\varphi_5)$

\vspace{1ex}

\noindent and every $\partial$-exact $(4,\,1)$-form is a linear combination of $\overline\varphi_3\wedge\widehat\varphi_3$, $\overline\varphi_3\wedge\widehat\varphi_4$ and $\overline\varphi_3\wedge\widehat\varphi_5$.

On the one hand, we get $$\psi_1(t)\lrcorner(\rho_1(s)\lrcorner u) = \sum\limits_{i=1}^5\sum\limits_{j=3}^5\sum\limits_{\lambda=1}^2 (-1)^{j-1}\, t_{i\lambda}\,s_j\,\overline\varphi_\lambda\wedge\overline\varphi_3\wedge(\theta_i\lrcorner\widehat\varphi_j).$$

\noindent Now, $\theta_i\lrcorner\widehat\varphi_j$ is always $\bar\partial$-closed because it vanishes when $i=j$, it equals $(-1)^{i-1}\widehat\varphi_{ij}$ when $i<j$ and it equals $(-1)^i\widehat\varphi_{ji}$ when $i>j$, where $\widehat\varphi_{ij}$ stands for $\varphi_1\wedge\dots\wedge\varphi_5$ with $\varphi_i$ and $\varphi_j$ omitted and $i<j$. All the $\varphi_i$'s being $\bar\partial$-closed, so are all the $\widehat\varphi_{ij}$'s. Meanwhile, $\bar\partial(\overline\varphi_\lambda\wedge\overline\varphi_3)=-\overline\varphi_\lambda\wedge\bar\partial\overline\varphi_3 = 0$ for all $\lambda\in\{1,\,2\}$, since $\bar\partial\overline\varphi_3 = \overline\varphi_1\wedge\overline\varphi_2$. This proves that $\psi_1(t)\lrcorner(\rho_1(s)\lrcorner u)\in\ker\bar\partial$.

On the other hand, we get \begin{eqnarray*}\partial(\psi_1(t)\lrcorner(\rho_1(s)\lrcorner u)) & = & [\psi_1(t),\,\rho_1(s)]\lrcorner u = \sum\limits_{i=1}^5\sum\limits_{j=3}^5\sum\limits_{\lambda=1}^2 t_{i\lambda}\,s_j\,\overline\varphi_\lambda\wedge\overline\varphi_3\wedge([\theta_i,\,\theta_j]\lrcorner u) \\
  & = & - \sum\limits_{\lambda=1}^2 t_{1\lambda}\,s_3\,\overline\varphi_\lambda\wedge\overline\varphi_3\wedge(\theta_4\lrcorner u) - \sum\limits_{\lambda=1}^2 t_{2\lambda}\,s_3\,\overline\varphi_\lambda\wedge\overline\varphi_3\wedge(\theta_5\lrcorner u)  \\
  & = & t_{11}\,s_3\,\overline\varphi_1\wedge\overline\varphi_3\wedge\widehat\varphi_4 +  t_{12}\,s_3\,\overline\varphi_2\wedge\overline\varphi_3\wedge\widehat\varphi_4 - t_{21}\,s_3\,\overline\varphi_1\wedge\overline\varphi_3\wedge\widehat\varphi_5 - t_{22}\,s_3\,\overline\varphi_2\wedge\overline\varphi_3\wedge\widehat\varphi_5  \\
  & = & t_{11}\,s_3\,\bar\partial\overline\varphi_4\wedge\widehat\varphi_4 +  t_{12}\,s_3\,\bar\partial\overline\varphi_5\wedge\widehat\varphi_4 - t_{21}\,s_3\,\bar\partial\overline\varphi_4\wedge\widehat\varphi_5 - t_{22}\,s_3\,\bar\partial\overline\varphi_5\wedge\widehat\varphi_5  \\
 & = & \bar\partial(t_{11}\,s_3\,\overline\varphi_4\wedge\widehat\varphi_4 +  t_{12}\,s_3\,\overline\varphi_5\wedge\widehat\varphi_4 - t_{21}\,s_3\,\overline\varphi_4\wedge\widehat\varphi_5 - t_{22}\,s_3\,\overline\varphi_5\wedge\widehat\varphi_5)\in\mbox{Im}\,\bar\partial,\end{eqnarray*}

\noindent where the second line followed from the fact that $[\theta_i,\,\theta_j]=0$ unless $i,j\in\{1,\,2,\,3\}$ and $i\neq j$. Given the fact that the summation bears over $j\in\{3,\,4,\, 5\}$, this forces $j=3$ and $i\in\{1,\,2\}$. Then, we get the second line from $[\theta_1,\,\theta_3] = -\theta_4$ and $[\theta_2,\,\theta_3] = -\theta_5$.

The facts that $\psi_1(t)\lrcorner(\rho_1(s)\lrcorner u)\in\ker\bar\partial$ and $\partial(\psi_1(t)\lrcorner(\rho_1(s)\lrcorner u))\in\mbox{Im}\,\bar\partial$ translate to $\psi_1(t)\lrcorner(\rho_1(s)\lrcorner u)\in{\cal Z}_2^{3,\,2}$, as desired.

\vspace{2ex}

(c)\, Let $\psi_1(t), \rho_1(s)\in C^\infty_{0,\,1}(X,\,T^{1,\,0}X)$ such that $\psi_1(t)\lrcorner u, \rho_1(s)\lrcorner u\in\mbox{Im}\,\partial$. Then, $$\psi_1(t)=\sum\limits_{i=3}^5 t_i\,\theta_i\overline\varphi_3, \hspace{2ex} \rho_1(s)=\sum\limits_{j=3}^5 s_j\,\theta_j\overline\varphi_3, \hspace{1ex}\mbox{so}\hspace{1ex}\rho_1(s)\lrcorner u = \sum\limits_{j=3}^5 (-1)^{j-1}\, s_j\,\overline\varphi_3\wedge\widehat\varphi_j.$$

\noindent We get $$\psi_1(t)\lrcorner(\rho_1(s)\lrcorner u) = \sum\limits_{i=3}^5\sum\limits_{j=3}^5 (-1)^{j-1}\, t_i\, s_j\,\overline\varphi_3\wedge\overline\varphi_3\wedge(\theta_i\lrcorner\widehat\varphi_j)=0\in{\cal Z}_2^{3,\,2},$$

\noindent as desired. This completes the proof of Lemma \ref{Lem:Iwasawas_def-hypotheses}.\hfill $\Box$

\vspace{3ex}

\noindent {\bf Acknowledgements.} This work has been partially supported by the projects MTM2017-85649-P (AEI/FEDER, UE), and E22-17R ``\'Algebra y Geometr\'{\i}a'' (Gobierno de Arag\'on/FEDER)."

\vspace{2ex}

\noindent {\bf References.} \\

\noindent [AK17]\, D. Angella, H. Kasuya --- {\it Bott-Chern Cohomology of Solvmanifolds} --- Ann. Glob. Anal. Geom. {\bf 52}, no. 4 (2017), 363--411.

\vspace{1ex}

\noindent [Has10]\, K. Hasegawa --- {\it Small Deformations and Non-left-invariant Complex Structures on Six-dimensional Compact Solvmanifolds} --- Diff. Geom. App. {\bf 28}, no. 2 (2010), 220-227.

\vspace{1ex}

\noindent [Kaw92]\, Y. Kawamata --- {\it Unobstructed Deformations -- a Remark on a Paper of Z. Ran.} ---  J. Alg.Geom {\bf 1} (1992), 183-190. Erratum in J. Alg.Geom {\bf 6} (1997), 803-804.

\vspace{1ex}

\noindent [KS60]\, K. Kodaira, D.C. Spencer --- {\it On Deformations of Complex Analytic Structures, III. Stability Theorems for Complex Structures} --- Ann. of Math. {\bf 71}, no.1 (1960), 43-76.

\vspace{1ex}

\noindent [Kur62]\, M. Kuranishi --- {\it On the Locally Complete Families of Complex Analytic Structures} --- Ann. of Math. {\bf 75}, no. 3 (1962), 536-577.

\vspace{1ex}

\noindent [Nak75]\, I. Nakamura --- {\it Complex parallelisable manifolds and their small deformations} --- Journal of Differential Geometry {\bf 10} (1975), 85-112.

\vspace{1ex}

\noindent [Pop13]\, D. Popovici --- {\it Holomorphic Deformations of Balanced Calabi-Yau $\partial\bar\partial$-Manifolds} --- Ann. Inst. Fourier, 69 (2019) no. 2, pp. 673-728. doi : 10.5802/aif.3254.

\vspace{1ex}

\noindent [Pop15]\, D. Popovici --- {\it Aeppli Cohomology Classes Associated with Gauduchon Metrics on Compact Complex Manifolds} --- Bull. Soc. Math. France {\bf 143} (3), (2015), p. 1-37.

\vspace{1ex}

\noindent [Pop16]\, D. Popovici --- {\it Degeneration at $E_2$ of Certain Spectral Sequences} ---  International Journal of Mathematics {\bf 27}, no. 13 (2016), doi: 10.1142/S0129167X16501111.

\vspace{1ex}

\noindent [Pop18]\, D. Popovici --- {\it Non-K\"ahler Mirror Symmetry of the Iwasawa Manifold} --- International Mathematics Research Notices (IMRN), doi: 10.1093/imrn/rny256.

\vspace{1ex}

\noindent [PSU20a]\, D. Popovici, J. Stelzig, L. Ugarte --- {\it Higher-Page Hodge Theory of Compact Complex Manifolds} --- arXiv:2001.02313v2.

\vspace{1ex}

\noindent [PSU20b]\, D. Popovici, J. Stelzig, L. Ugarte --- {\it Higher-Page Bott-Chern and Aeppli Cohomologies and Applications} --- J. reine angew. Math., doi: 10.1515/crelle-2021-0014.

\vspace{1ex}

\noindent [Ran92]\, Z. Ran --- {\it Deformations of Manifolds with Torsion or Negative Canonical Bundle} --- J. Alg. Geom. {\bf 1} (1992), no. 2, 279-291.

\vspace{1ex}

\noindent [Rol11]\, S. Rollenske --- {\it The Kuranishi Space of Complex Parallelisable Nilmanifolds} --- J. Eur. Math. Soc. {\bf 13} (2011), 513--531.

\vspace{1ex}

\noindent [Sak76]\, Y. Sakane --- {\it On Compact Complex Parallelisable Solvmanifolds} --- Osaka J. Math. {\bf 13} (1976), 187--212.

\vspace{1ex}

\noindent [Sch07]\, M. Schweitzer --- {\it Autour de la cohomologie de Bott-Chern} --- arXiv:0709.3528v1.

\vspace{1ex}

\noindent [Ste21]\, J. Stelzig --- {\it On the Structure of Double Complexes} --- J. London Math. Soc. \\doi: 10.1112/jlms.12453.

\vspace{1ex}

\noindent [Tia87]\, G. Tian --- {\it Smoothness of the Universal Deformation Space of Compact Calabi-Yau Manifolds and Its Petersson-Weil Metric} --- Mathematical Aspects of String Theory (San Diego, 1986), Adv. Ser. Math. Phys. 1, World Sci. Publishing, Singapore (1987), 629--646.

\vspace{1ex}

\noindent [Tod89]\, A. N. Todorov --- {\it The Weil-Petersson Geometry of the Moduli Space of $SU(n\geq 3)$ (Calabi-Yau) Manifolds I} --- Comm. Math. Phys. {\bf 126} (1989), 325-346.

\vspace{1ex}

\noindent [Wan54]\, H.-C. Wang --- {\it Complex Parallisable Manifolds} --- 

Proc. Amer. Math. Soc. {\bf 5} (1954), 771--776.

\vspace{2ex}

\noindent Institut de Math\'ematiques de Toulouse,       \hfill Mathematisches Institut

\noindent Universit\'e Paul Sabatier,                    \hfill  Ludwig-Maximilians-Universit\"at

\noindent 118 route de Narbonne, 31062 Toulouse, France  \hfill Theresienstr. 39, 80333 M\"unchen, Germany

\noindent Email: popovici@math.univ-toulouse.fr          \hfill Email: Jonas.Stelzig@math.lmu.de

\vspace{1ex}

\noindent and

\vspace{1ex}

\noindent Departamento de Matem\'aticas\,-\,I.U.M.A.,

\noindent Universidad de Zaragoza,

\noindent Campus Plaza San Francisco, 50009 Zaragoza, Spain

\noindent Email: ugarte@unizar.es

\end{document}